\tikzstyle{map} = [->, font=\scriptsize]
\tikzstyle{linj} = [left hook->, font=\scriptsize]
\tikzstyle{rinj} = [right hook->, font=\scriptsize]
\tikzstyle{sur} = [->>, font=\scriptsize]
\tikzstyle{cell} = [double,double equal sign distance,-implies, shorten >= 3.75pt, shorten <= 3.75pt, font=\scriptsize]
\tikzstyle{eq} = [double,double equal sign distance]
\tikzstyle{ps} = [shorten >= 2pt]
\tikzstyle{iso} = [above, sloped, inner sep=1.5pt]
\tikzstyle{nat} = [above, sloped, inner sep=2pt]
\tikzstyle{desc} = [fill=white, inner sep=2pt]
\tikzstyle{small} = [font=\scriptsize]
\tikzstyle{textbaseline} = [baseline=-2.8pt]
\tikzstyle{barred} = [decoration={markings, mark=at position 0.5 with {\draw[-] (0,-1.5pt) -- (0,1.5pt);}}, postaction ={decorate}]
\tikzstyle{math4} = [matrix of math nodes, row sep={3.5em,between origins}, column sep={4em,between origins}, text height=1.5ex, text depth=0.25ex, nodes in empty cells]
\tikzstyle{math35} = [math4, row sep={3.25em,between origins}, column sep={3.5em,between origins}]
\tikzstyle{minimath} = [matrix of math nodes, row sep={3em,between origins}, column sep={3.25em,between origins}, font=\scriptsize, text height=1ex, text depth=0.25ex, nodes in empty cells]
\def\slashedarrowfill@#1#2#3#4#5{%
  $\m@th\thickmuskip0mu\medmuskip\thickmuskip\thinmuskip\thickmuskip
   \relax#5#1\mkern-7mu%
   \cleaders\hbox{$#5\mkern-2mu#2\mkern-2mu$}\hfill
   \mathclap{#3}\mathclap{#2}%
   \cleaders\hbox{$#5\mkern-2mu#2\mkern-2mu$}\hfill
   \mkern-7mu#4$%
}
\def\rightslashedarrowfill@{%
  \slashedarrowfill@\relbar\relbar\mapstochar\rightarrow}
\newcommand\xslashedrightarrow[2][]{%
  \ext@arrow 0055{\rightslashedarrowfill@}{#1}{#2}}
\def\slashedrightarrow{\xslashedrightarrow{}}
\providecommand{\opcart}{\strut opcart}
\providecommand{\cart}{\strut cart}
\newtheorem{theorem}{Theorem}
\newtheorem{lemma}[theorem]{Lemma}
\newtheorem{corollary}[theorem]{Corollary}
\newtheorem{proposition}[theorem]{Proposition}
\theoremstyle{definition}
\newtheorem{definition}[theorem]{Definition}
\theoremstyle{remark}
\newtheorem{example}[theorem]{Example}
\providecommand{\defref}[1]{Definition~\ref{#1}}
\providecommand{\exref}[1]{Example~\ref{#1}}
\providecommand{\figref}[1]{Figure~\ref{#1}}
\providecommand{\lemref}[1]{Lemma~\ref{#1}}
\providecommand{\propref}[1]{Proposition~\ref{#1}}
\providecommand{\thmref}[1]{Theorem~\ref{#1}}
\providecommand{\of}{\circ}
\providecommand{\iso}{\cong}
\providecommand{\brar}{\slashedrightarrow}
\providecommand{\xrar}[1]{\xrightarrow{#1}}
\providecommand{\xRar}[1]{\xRightarrow{#1}}
\providecommand{\eps}{\varepsilon}
\DeclareMathOperator{\dash}{--}
\providecommand{\ndash}{\nobreakdash-}
\providecommand{\tens}{\otimes}
\providecommand{\ul}[1]{\underline{#1}{}}
\providecommand{\mf}[1]{\mathfrak{#1}}
\providecommand{\brcs}[1]{\lbrace #1 \rbrace}
\providecommand{\brks}[1]{\lbrack #1 \rbrack}
\providecommand{\bigpars}[1]{\bigl(#1\bigr)}
\providecommand{\set}[1]{\brcs{#1}}
\providecommand{\natarrow}{\Rightarrow}
\providecommand{\map}[3]{#1\colon#2\to#3}
\providecommand{\nat}[3]{#1\colon#2\natarrow#3}
\providecommand{\cell}[3]{#1\colon#2\Rightarrow#3}
\providecommand{\hmap}[3]{#1\colon#2\slashedrightarrow#3}
\DeclareMathOperator{\id}{id}
\DeclareMathOperator{\yon}{y}
\providecommand{\ladj}{\dashv}
\providecommand{\op}[1]{#1^\textup{op}}
\providecommand{\co}[1]{#1^\textup{co}}
\providecommand{\ps}[1]{\widehat{#1}}
\providecommand{\el}[1]{\int #1}
\providecommand{\catvar}[1]{\mathcal{#1}}
\providecommand{\E}{\catvar E}
\renewcommand{\L}{\catvar L}
\providecommand{\K}{\catvar K}
\providecommand{\V}{\catvar V}
\providecommand{\Set}{\mathsf{Set}}
\providecommand{\Cat}{\mathsf{Cat}}
\providecommand{\Span}[1]{\mathsf{Span}(#1)}
\providecommand{\Prof}{\mathsf{Prof}}
\providecommand{\enProf}[1]{#1\text-\Prof}
\providecommand{\inProf}[1]{\Prof(#1)}
\providecommand{\hc}{\odot}
\begin{document}
	\title{Left Kan extensions that are algebraic over colax-idempotent $2$-monads}
	\author{Seerp Roald Koudenburg}
	\date{Draft version as of December 11, 2014\footnote{I intend to, after further expansion, publish the material in this note as a paper.}}
	\maketitle
	\begin{abstract}
		Using the language of double categories we generalise a classical result on finite-product-preserving left Kan extensions, by Ad\'amek and Rosick\'y, to one on left Kan extensions that preserve algebraic structures defined by `suitable' colax-idempotent $2$-monads, as well as obtain two related results. To be precise, by `suitable' $2$-monads here we mean ones that extend to normal lax double monads. In an appendix we consider induced algebra structures on presheaf objects.
	\end{abstract}
	
	Having worked with double categories for some time now I keep being surprised by their usefulness in studying Kan extensions. Recently I have been looking at how to reformulate a classic result, on finite-product-preserving left Kan extensions, in the language of double categories. This again turned out---in my opinion---very nicely, prompting me to write this note.
	
	The classic result in question is (a part of) Theorem 2.6 of \cite{Adamek-Rosicky01}, by Ad\'amek and Rosick\'y, that can be stated using the notion of cosifted category, as follows. One way of defining a small category $A$ to be \emph{cosifted} is to require that it is not empty and that, for each pair $x$, $y \in A$, the category $\Span{x,y}$, of spans $x \leftarrow \cdot \to y$ in $A$ and their morphisms, is connected.	Writing $\map{\yon_A}A{\ps A = \brks{\op A, \Set}}$ for the yoneda embedding and $\el d$ for the category of elements of a copresheaf $\map dA\Set$, the result is as follows.
	
	\begin{theorem}[Ad\'amek and Rosick\'y] \label{finite-product-preserving left Kan extensions}
		The left Kan extension $\map{\textup{lan}_{\yon_A} d}{\ps A}\Set$ of $\map dA\Set$ along $\map{\yon_A}A{\ps A}$ preserves finite products precisely if $\el d$ is cosifted.
	\end{theorem}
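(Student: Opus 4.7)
The plan is to reduce the statement to preservation of finite products of \emph{representables}, and then compute directly via the coend formula for the tensor product. Recall the standard formula
\[
	(\textup{lan}_{\yon_A} d)(p) = \int^{a \in A} p(a) \times d(a),
\]
which may equivalently be written as $\colim_{\op{(\el d)}} \pi^* p$, where $\map\pi{\el d}A$ is the projection and $\pi^* p$ denotes precomposition of $p$ with $\op\pi$. Since $\textup{lan}_{\yon_A} d$ is cocontinuous and finite products commute with colimits in each variable, both in $\ps A$ and in $\Set$, it suffices to verify preservation of finite products on representable presheaves.

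For the empty product $1 \in \ps A$ one has $(\textup{lan}_{\yon_A} d)(1) = \colim_A d = \pi_0(\el d)$, so preservation of the terminal object is equivalent to $\el d$ being non\ndash empty and connected. For a binary product $\yon_A a \times \yon_A b$ the coend formula gives
\[
	(\textup{lan}_{\yon_A} d)(\yon_A a \times \yon_A b) = \int^{c \in A} A(c, a) \times A(c, b) \times d(c),
\]
which is the set of equivalence classes of quadruples $(c, f, g, x)$, with $\map fc a$, $\map gc b$ in $A$ and $x \in d(c)$, modulo the coend relations generated by morphisms $\map h c{c'}$. The comparison map to $d(a) \times d(b)$ sends $(c, f, g, x) \mapsto \bigpars{d(f)(x), d(g)(x)}$.

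The key step, which I expect to be the main technical obstacle, is to identify the fibre of this comparison map over a pair $(y_1, y_2) \in d(a) \times d(b)$. A quadruple $(c, f, g, x)$ lying over $(y_1, y_2)$ is precisely a span $(a, y_1) \leftarrow (c, x) \to (b, y_2)$ in $\el d$; moreover the coend relation restricted to such quadruples is generated by morphisms of spans in $\el d$ (the conditions $d(f)(x) = y_1$ and $d(g)(x) = y_2$ are automatically transported along such morphisms). The fibre is therefore $\pi_0\bigpars{\Span{(a, y_1), (b, y_2)}}$ taken in $\el d$.

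Combining, the comparison map is a bijection for every $(a, b)$ precisely when, for every pair $(a, y_1)$, $(b, y_2)$ in $\el d$, the category $\Span{(a, y_1), (b, y_2)}$ is non-empty and connected; together with the analysis of the terminal object this is exactly the condition that $\el d$ is cosifted. Cocontinuity of $\textup{lan}_{\yon_A} d$ then upgrades this to preservation of finite products of arbitrary presheaves, and the converse implication is automatic since representables sit inside $\ps A$. The heart of the argument is thus the identification of the relevant fibres with $\pi_0$ of span categories in $\el d$; the remaining steps are routine manipulations with the coend formula.
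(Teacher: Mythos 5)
Your proof is correct, and it takes a genuinely different route from the paper: the paper does not prove this theorem directly at all, but quotes it from Ad\'amek and Rosick\'y and then recovers it as the special case of the general double-categorical \thmref{thm1} (with $T$ the finite-product $2$-monad, $M = 1$ and $j = \yon_A$), after translating cosiftedness of $\el d$ into the right Beck-Chevalley condition for $\iota_D$ in \exref{right Beck-Chevalley condition}. The combinatorial heart is the same in both: your identification of the fibres of the comparison map over $(y_1, y_2)$ with $\pi_0\bigpars{\Span{(a,y_1),(b,y_2)}}$ is precisely the ``simple exercise'' of \exref{right Beck-Chevalley condition} (there stated for $k$-spans, with $k=0$ giving connectedness of $\el d$ and $k=2$ sufficing for all $k \ges 1$), and your observation that the coend relations respect the fibres and restrict there to morphisms of spans is the step that needs care and that you carry out correctly. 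What differs is everything around it: you use cocontinuity of $\textup{lan}_{\yon_A} d$ together with cartesian closedness of $\ps A$ and $\Set$ to reduce to representables, which yields a short self-contained proof; the paper instead routes the argument through pointwise Kan extensions and colax-idempotent $2$-monads, where the role of ``finite products commute with colimits in each variable'' is played by the hypothesis that $1_w \of T\eta$ defines a left Kan extension, the payoff being that the same argument then applies verbatim to other algebraic structures. One point worth spelling out in your final paragraph: your two conditions (terminal object preserved iff $\el d$ is non-empty and connected; binary products of representables preserved iff all binary span categories are connected) recombine into cosiftedness as defined here ($\el d$ non-empty and all span categories connected) only because non-emptiness of the span categories forces connectedness of $\el d$; this is easy but should be said.
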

	
	The aim of this note is to reformulate this result in the language of double categories and, by doing so, discover that it generalises to other left Kan extensions, between objects that are similarly algebraic (more precisely: that are algebraic over `suitable' colax-idempotent $2$-monads).
	
	I have tried to keep this note largely self-contained. In its first three sections we review the notions of `colax-idempotent $2$-monads', `double categories' and the `(op-)cartesian cells' therein. Readers familiar with these notions may want to skip these sections. The three sections that follow introduce some category theory within a double category: we will consider `Kan extensions', the `Beck-Chevalley condition' and `yoneda embeddings' in a double category. In the final section we state and prove the generalisation of the theorem above together with two closely related results.
		
	\section{Colax-idempotent 2-monads}
	The first thing we need is that `a choice of finite products $(x_1 \times \dotsb \times x_n)$' in a category $M$, one for each sequence $x_1, \dotsc, x_n \in M$, is a form of `algebraic structure' on $M$. More precisely there is a $2$-monad $T = (T, \mu, \iota)$ on the $2$-category $\Cat$, of categories, functors and natural transformations, such that giving an \emph{action} $\map m{TM}M$, together with \emph{associator} and \emph{unitor} transformations $\bar m$ and $\hat m$ as below (both invertible and satisfying the usual coherence axioms), is the same as choosing finite products in $M$.
	\begin{displaymath}
		\begin{tikzpicture}[baseline]
			\matrix(m)[math4]{T^2M & TM \\ TM & M \\};
			\path[map]	(m-1-1) edge node[above] {$Tm$} (m-1-2)
													edge node[left] {$\mu_M$} (m-2-1)
									(m-1-2) edge node[right] {$m$} (m-2-2)
									(m-2-1) edge node[below] {$m$} (m-2-2);
			\path 			(m-1-2) edge[cell, below right, shorten >=9.5pt, shorten <=9.5pt] node {$\bar m$} (m-2-1);
		\end{tikzpicture} \qquad\qquad\qquad\qquad \begin{tikzpicture}[baseline]
			\matrix(m)[math4]{M & \\ TM & M \\};
			\path[map]	(m-1-1) edge node[left] {$\iota_M$} (m-2-1)
									(m-2-1) edge node[below] {$m$} (m-2-2);
			\path				(m-1-1) edge[eq, transform canvas={xshift=2pt, yshift=2pt}] (m-2-2)
									(m-1-2) edge[cell, shorten <=23.5pt, shorten >=2.5pt, transform canvas={xshift=-2pt}] node[xshift=-7pt, yshift=-5pt, below right] {$\hat m$} (m-2-1);
		\end{tikzpicture}
	\end{displaymath}
	
	In some more detail: the image of $M$ under $T$ has as objects (possibly empty) sequences $\ul x = (x_1, \dotsc, x_n)$ of objects of $M$ while, for another such sequence $\ul y = (y_1, \dotsc, y_k)$, its morphisms $\ul x \to \ul y$ are sequences $(s, u_1, \dotsc, u_k)$ consisting of a function $\map s{\brcs{1, \dotsc, k}}{\brcs{1, \dotsc, n}}$ and morphisms $\map{u_i}{x_{si}}{y_i}$ in $M$. The unit $\map{\iota_M}M{TM}$ inserts the objects and morphisms of $M$ into $TM$ as singleton sequences. In the correspondence mentioned above the action $\map m{TM}M$ maps $(x_1, \dotsc, x_n)$ to the chosen product $(x_1 \times \dotsb \times x_n) = m(x_1, \dotsc, x_n)$.
	
	The $2$-monad $T$ for categories with finite products is special in that it is `\emph{colax-idempotent}' in the sense of \cite{Kelly-Lack97}. In particular this means that a pseudo $T$-algebra structure $(m, \bar m, \hat m)$ on $M$, as above, is `essentially unique'. Precisely: there exists a cell $\cell{\theta_m}{\iota_M \of m}{\id_{TM}}$ such that the pair $(\hat m, \theta_m)$ defines $m$ as the right adjoint of $\map{\iota_M}M{TM}$. In our case the adjunction $\iota_M \ladj m$ is simply the universal property of products: maps $(x) \to (y_1, \dotsc, y_k)$ in $TM$ correspond to maps $x \to (y_1 \times \dotsb \times y_k)$ in $M$.
	
	Furthermore any morphism $\map fAC$ between pseudo $T$-algebras is uniquely a colax $T$-morphism, with structure cell $\cell{\bar f}{f \of a}{c \of Tf}$ given by the following composite (in anticipation of double categories we have already drawn the morphisms vertically and the cells as squares here). In our case $\bar f$ is the natural transformation consisting of the canonical maps $f(x_1 \times \dotsb \times x_n) \to (fx_1 \times \dotsb \times fx_n)$.
	\begin{equation} \label{colax algebra structure}
		\begin{tikzpicture}[textbaseline]
			\matrix(m)[math35]{
				TA & TA & TA & TA \\
				A & A & A & \\
				C & C & TA & TA \\
				& TC & TC & TC \\
				C & C & C & C \\ };
			\path[map]	(m-1-1) edge node[left] {$a$} (m-2-1)
									(m-1-2) edge node[left] {$a$} (m-2-2)
									(m-1-3) edge node[left] {$a$} (m-2-3)
									(m-2-1) edge node[left] {$f$} (m-3-1)
									(m-2-2) edge node[left] {$f$} (m-3-2)
									(m-2-3) edge node[left] {$\iota_A$} (m-3-3)
									(m-3-2) edge node[right] {$\iota_C$} (m-4-2)
									(m-3-3) edge node[right] {$Tf$} (m-4-3)
									(m-3-4) edge node[right] {$Tf$} (m-4-4)
									(m-4-2) edge node[right] {$c$} (m-5-2)
									(m-4-3) edge node[right] {$c$} (m-5-3)
									(m-4-4) edge node[right] {$c$} (m-5-4);
			\path				(m-1-1) edge[eq] (m-1-2)
									(m-1-2) edge[eq] (m-1-3)
									(m-1-3) edge[eq] (m-1-4)
									(m-1-4) edge[eq] (m-3-4)
									(m-2-1) edge[eq] (m-2-2)
									(m-2-2) edge[eq] (m-2-3)
									(m-3-1) edge[eq] (m-3-2)
													edge[eq] (m-5-1)
									(m-3-3) edge[eq] (m-3-4)
									(m-4-2) edge[eq] (m-4-3)
									(m-4-3) edge[eq] (m-4-4)
									(m-5-1) edge[eq] (m-5-2)
									(m-5-2) edge[eq] (m-5-3)
									(m-5-3) edge[eq] (m-5-4);
			\path[transform canvas={shift=($(m-3-3)!0.5!(m-4-3)$)}]	(m-3-1) edge[cell] node[right] {$\hat c$} (m-4-1)
									(m-1-3) edge[cell] node[right] {$\theta_a$} (m-2-3);
		\end{tikzpicture}
	\end{equation}
	
	In the above terms the first of the two statements that are asserted to be equivalent by \thmref{finite-product-preserving left Kan extensions} can be rephrased as follows. Writing $\map{l = \textup{lan}_{\yon_A} d}{\ps A}\Set$, as well as $\map v{T\ps A}{\ps A}$ and $\map w{T\Set}\Set$ for choices of finite products in $\ps A$ and $\Set$, it states that the unique structure cell $\cell{\bar l}{l \of v}{w \of Tl}$, that makes $l$ into a colax $T$-morphism, is invertible.
	
	\section{Double categories}
	We now briefly review the terminology of double categories. The notion of a \emph{double category} generalises that of a $2$-category by, instead of the usual single type, considering two types of morphism, one denoted $\map fAC$, $\map gBD, \dotsc$ and drawn vertically; the other denoted $\hmap JAB$, $\hmap KCD, \dotsc$ and drawn horizontally. The cells $\phi$, $\psi, \dotsc$ of a double category have both a vertical and horizontal morphism as source and as target, so that they are shaped like squares:
	\begin{displaymath}
	  \begin{tikzpicture}
	    \matrix(m)[math35]{A & B \\ C & D. \\};
	    \path[map]  (m-1-1) edge[barred] node[above] {$J$} (m-1-2)
	                        edge node[left] {$f$} (m-2-1)
	                (m-1-2) edge node[right] {$g$} (m-2-2)
	                (m-2-1) edge[barred] node[below] {$K$} (m-2-2);
	    \path[transform canvas={shift={($(m-1-2)!(0,0)!(m-2-2)$)}}] (m-1-1) edge[cell] node[right] {$\phi$} (m-2-1);
	  \end{tikzpicture}
	\end{displaymath}
	
	Vertical morphisms can be composed, as can horizontal morphisms, while cells can be composed both horizontally and vertically. As usual vertical composition and vertical identities, which are strictly associative and unital, are denoted by $\of$ and $\id$. Horizontal composition is denoted by $\hc$ and written in `diagrammatic order', while the horizontal unit $A \brar A$ is denoted $1_A$; often these are only associative and unital up to coherent invertible cells.  When drawing diagrams we depict both the horizontal and vertical units by the equal sign ($=$) while we leave identity cells empty, like we did in \eqref{colax algebra structure}. 
	
	Every double category $\K$ contains both a \emph{vertical $2$-category} $V(\K)$, consisting of its objects, vertical morphisms and \emph{vertical cells} (that is cells with units as horizontal source and target), as well as a \emph{horizontal bicategory} $H(\K)$, consisting of its objects, horizontal morphisms and \emph{horizontal cells} (cells with identities as vertical source and target). For further details we refer to \cite{Shulman08} by Shulman, which is very useful, or \cite{Koudenburg14a}
	
	\begin{example}
		The archetypical example of a double category is that of profunctors, denoted $\Prof$. Its objects are small categories and its vertical morphisms the functors between them, while its horizontal morphisms $\hmap JAB$ are profunctors; that is functors of the form $\map J{\op A \times B}\Set$. A cell $\phi$ as above is a natural transformation $\nat\phi J{K \of (\op f \times g)}$. Profunctors are composed by using coends (see Example~1.2 of \cite{Koudenburg14a}), while the unit profunctor $\hmap{1_A}AA$ is simply given by the hom-sets $1_A(x,y) = A(x,y)$. The vertical $2$-category $V(\Prof)$ contained in $\Prof$ is simply that of categories, functors and their transformations, while the horizontal bicategory $H(\Prof)$ is that of categories, profunctors and their transformations.
	
		The double category $\Prof$ has well-known generalisations $\enProf \V$ and $\inProf\E$, the first obtained by enriching over a suitable monoidal category $\V$ and the second by internalising in a suitable category $\E$ with pullbacks. We remark that in the former we do not require that $\V$ is closed symmetric monoidal: the horizontal morphisms $A \brar B$ of $\enProf\V$ are defined in a `bimodule-like way' (see Section 7 of \cite{DayStreet97}), and can be identified with the usual $\V$-functors $\op A \tens B \to \V$ in the case that $\V$ is closed symmetric monoidal.
	\end{example}
	
	\section{(Op-)cartesian cells}
	Invaluable to the theory of double categories are the notions of cartesian and opcartesian cell, which define restrictions and extensions of horizontal morphisms respectively, as follows. The cell $\phi$ above is called \emph{cartesian} if every cell $\chi$ below factors uniquely through $\phi$ as a cell $\psi$ as shown.
	\begin{displaymath}
		\begin{tikzpicture}[textbaseline]
    		\matrix(m)[math35]{X & Y \\ A & B \\ C & D \\};
    		\path[map]  (m-1-1) edge[barred] node[above] {$H$} (m-1-2)
        		                edge node[left] {$h$} (m-2-1)
        		        (m-1-2) edge node[right] {$k$} (m-2-2)
        		        (m-2-1) edge node[left] {$f$} (m-3-1)
        		        (m-2-2) edge node[right] {$g$} (m-3-2)
        		        (m-3-1) edge[barred] node[below] {$K$} (m-3-2);
    		\path[transform canvas={shift={($(m-2-1)!0.5!(m-1-1)$)}}] (m-2-2) edge[cell] node[right] {$\chi$} (m-3-2);
  		\end{tikzpicture} = \begin{tikzpicture}[textbaseline]
    		\matrix(m)[math35]{X & Y \\ A & B \\ C & D \\};
    		\path[map]  (m-1-1) edge[barred] node[above] {$H$} (m-1-2)
        		                edge node[left] {$h$} (m-2-1)
            		    (m-1-2) edge node[right] {$k$} (m-2-2)
            		    (m-2-1) edge[barred] node[below] {$J$} (m-2-2)
            		            edge node[left] {$f$} (m-3-1)
            		    (m-2-2) edge node[right] {$g$} (m-3-2)
            		    (m-3-1) edge[barred] node[below] {$K$} (m-3-2);
    		\path[transform canvas={shift=(m-2-1))}]
        		        (m-1-2) edge[cell] node[right] {$\psi$} (m-2-2)
        		        (m-2-2) edge[transform canvas={yshift=-0.3em}, cell] node[right] {$\phi$} (m-3-2);
  	\end{tikzpicture}
	\end{displaymath}
	If the cartesian cell $\phi$ exists then its horizontal source $J$ is called the \emph{restriction} of $K$ along $f$ and $g$, and denoted $K(f, g) = J$. By their universal property any two cartesian cells defining the same restriction factor through each other as invertible horizontal cells. In the double categories $\Prof$, $\enProf\V$ and $\inProf\E$ all restrictions exist; in $\Prof$ they are simply given by $K(f,g) = K \of (\op f \times g)$.
	
	Vertically dual, the cell $\psi$ in the right-hand side above is called \emph{opcartesian} if any cell $\chi$ factors uniquely through $\psi$ as a cell $\phi$, as shown. In that case the horizontal target $J$ of $\psi$ is called the \emph{extension} of $H$ along $h$ and $k$, which is again unique up to isomorphism. Cartesian cells satisfy the following pasting lemma; opcartesian cells satisfy its vertical dual.
	\begin{lemma}[Pasting lemma] If the cell $\phi$ in the right-hand side above is cartesian then $\chi$ is cartesian if and only if $\psi$ is.
	\end{lemma}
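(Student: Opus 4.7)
My plan is to prove both implications directly from the universal property of cartesian cells, in the standard fashion of pasting arguments. Throughout, write $\chi = \phi \of \psi$ for the vertical composite, and for an arbitrary test cell consider horizontal morphisms $\hmap{H'}{X'}{Y'}$ together with vertical morphisms $\map{h'}{X'}X$ and $\map{k'}{Y'}Y$.

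\medskip

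\textbf{Suppose $\psi$ is cartesian; show $\chi$ is cartesian.} Let $\chi'$ be a cell with top $H'$, bottom $K$, left $f \of h \of h'$, right $g \of k \of k'$. Since $\phi$ is cartesian, $\chi'$ factors uniquely through $\phi$ as a cell $\alpha$ with top $H'$, bottom $J$, left $h \of h'$, right $k \of k'$. Since $\psi$ is cartesian, $\alpha$ in turn factors uniquely through $\psi$ as a cell $\beta$ with top $H'$, bottom $H$, left $h'$, right $k'$. Then $\chi \of \beta = \phi \of \psi \of \beta = \phi \of \alpha = \chi'$, giving existence. For uniqueness: any $\beta'$ with $\chi \of \beta' = \chi'$ gives $\phi \of (\psi \of \beta') = \chi'$, so cartesianness of $\phi$ forces $\psi \of \beta' = \alpha$, and then cartesianness of $\psi$ forces $\beta' = \beta$.

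\medskip

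\textbf{Suppose $\chi$ is cartesian; show $\psi$ is cartesian.} Let $\alpha$ be a cell with top $H'$, bottom $J$, left $h \of h'$, right $k \of k'$. The composite $\phi \of \alpha$ has bottom $K$ and verticals $f \of h \of h'$, $g \of k \of k'$, so by cartesianness of $\chi$ it factors uniquely as $\phi \of \alpha = \chi \of \beta = \phi \of (\psi \of \beta)$ for a unique $\beta$ with top $H'$, bottom $H$, left $h'$, right $k'$. Now apply cartesianness of $\phi$ to the equation $\phi \of \alpha = \phi \of (\psi \of \beta)$: uniqueness of the factorization through $\phi$ yields $\alpha = \psi \of \beta$, giving existence. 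For uniqueness: any $\beta'$ with $\psi \of \beta' = \alpha$ gives $\chi \of \beta' = \phi \of \alpha$, so cartesianness of $\chi$ forces $\beta' = \beta$.

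\medskip

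The only step that requires a moment's thought is the use of cartesianness of $\phi$ at the end of the second direction: the argument passes $\phi \of \alpha = \phi \of (\psi \of \beta)$ through the uniqueness clause of $\phi$'s universal property to cancel $\phi$ on the left, rather than merely producing a new factorization. Everything else is routine diagram chasing.
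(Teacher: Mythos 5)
The paper states this pasting lemma without proof, treating it as a standard fact, and your argument is precisely the standard one: factoring test cells through $\phi$ and then $\psi$ in one direction, and in the other direction using the uniqueness clause of $\phi$'s universal property to cancel $\phi$ from the equation $\phi \of \alpha = \phi \of (\psi \of \beta)$. Your proof is correct and complete, including the uniqueness checks in both directions.
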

	
	For a vertical morphism $\map fAC$ the restrictions $\hmap{1_C(f, \id)}AC$ and $\hmap{1_C(\id, f)}CA$, of the unit $\hmap{1_C}CC$, are called the \emph{companion} and \emph{conjoint} of $f$; they are denoted $f_*$ and $f^*$ respectively. In the double category $\Prof$ of profunctors, the companion and the conjoint of a functor $\map fAC$ are simply the representable profunctors $f_*(x,y) = C(fx, y)$ and $f^*(y,x) = C(y, fx)$, where $x \in A$ and $y \in C$.
	
	While we have defined companions and conjoints as restrictions, the following lemma (and its horizontal dual) show that they can equivalently be defined as extensions along $f$. In fact it shows that, if the companion of $f$ exists, then the cartesian cells $\phi$ defining it correspond bijectively to the opcartesian cells $\psi$ defining it, in such a way that each corresponding pair $(\phi, \psi)$ satisfies the identities below, which we call the \emph{companion identities}. Analogous identities are satisfied by corresponding pairs of a cartesian and opcartesian cell defining the same conjoint; these we call the \emph{conjoint identities}.
	\begin{lemma} \label{unit cell factorisation}
		Consider a factorisation of the unit $1_f$ of $\map fAC$ as on the left below.   The following conditions are equivalent: $\phi$ is cartesian; $\psi$ is opcartesian; the identity on the right holds.
	 			\begin{displaymath}
	  	\begin{tikzpicture}[textbaseline]
	    	\matrix(m)[math35]{A & A \\ C & C \\};
	    	\path[map]  (m-1-1) edge node[left] {$f$} (m-2-1)
	    	            (m-1-2) edge node[right] {$f$} (m-2-2);
	    	\path				(m-1-1) edge[eq] (m-1-2)
	    							(m-2-1) edge[eq] (m-2-2);
	    	\path[transform canvas={xshift=1.75em}] (m-1-1) edge[cell] node[right] {$1_f$} (m-2-1);
	  	\end{tikzpicture} = \begin{tikzpicture}[textbaseline]
    		\matrix(m)[math35]{A & A \\ A & C \\ C & C \\};
    		\path[map]  (m-1-2) edge node[right] {$f$} (m-2-2)
            		    (m-2-1) edge[barred] node[below] {$J$} (m-2-2)
            		            edge node[left] {$f$} (m-3-1);
        \path				(m-1-1) edge[eq] (m-1-2)
        										edge[eq] (m-2-1)
        						(m-2-2) edge[eq] (m-3-2)
        						(m-3-1) edge[eq] (m-3-2);
    		\path[transform canvas={xshift=1.75em}]
        		        (m-1-1) edge[cell] node[right] {$\psi$} (m-2-1)
        		        (m-2-1) edge[transform canvas={yshift=-3pt}, cell] node[right] {$\phi$} (m-3-1);
  		\end{tikzpicture} \qquad\qquad \begin{tikzpicture}[textbaseline]
  			\matrix(m)[math35]{A & A & C \\ A & C & C \\};
  			\path[map]	(m-1-2) edge[barred] node[above] {$J$} (m-1-3)
  													edge node[right] {$f$} (m-2-2)
  									(m-2-1) edge[barred] node[below] {$J$} (m-2-2);
  			\path				(m-1-1) edge[eq] (m-1-2)
  													edge[eq] (m-2-1)
  									(m-1-3) edge[eq] (m-2-3)
  									(m-2-2) edge[eq] (m-2-3);
  			\path[transform canvas={xshift=1.75em}]	(m-1-1) edge[cell] node[right] {$\psi$} (m-2-1)
  									(m-1-2) edge[cell] node[right] {$\phi$} (m-2-2);
  		\end{tikzpicture} = \begin{tikzpicture}[textbaseline]
  			\matrix(m)[math35]{A & C \\ A & C \\};
  			\path[map]	(m-1-1) edge[barred] node[above] {$J$} (m-1-2)
  									(m-2-1) edge[barred] node[below] {$J$} (m-2-2);
  			\path				(m-1-1) edge[eq] (m-2-1)
  									(m-1-2) edge[eq] (m-2-2);
  			\path[transform canvas={xshift=1.75em, xshift=-5.5pt}]	(m-1-1) edge[cell] node[right] {$\id_J$} (m-2-1);
  		\end{tikzpicture}
	  \end{displaymath}
	\end{lemma}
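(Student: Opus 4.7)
The plan is to prove the two equivalences $(1) \Leftrightarrow (3)$ and $(2) \Leftrightarrow (3)$; since conditions $(1)$ and $(2)$ are vertically dual (cartesian versus opcartesian), it suffices to establish the first and obtain the second by interchanging the roles of `cartesian' and `opcartesian' and of `top' and `bottom' throughout. Both directions are driven by the middle-four interchange law, combined with the horizontal unitor identifications $1_A \hc J \cong J \cong J \hc 1_C$ and the given factorisation $1_f = \phi \of \psi$.

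For $(1) \Rightarrow (3)$, assuming $\phi$ is cartesian, I would verify $\psi \hc \phi = \id_J$ by invoking uniqueness in the cartesian universal property. Both $\psi \hc \phi$ and $\id_J$ are globular cells $J \Rightarrow J$ (under the unitors above), so it suffices to check that they induce the same cell after pasting $\phi$ below. For $\id_J$ this yields $\phi$ on the nose. For $\psi \hc \phi$, I would rewrite the lower $\phi$ as $\phi \hc 1_{\id_C}$ via the horizontal unitor and apply interchange:
\[ \phi \of (\psi \hc \phi) \;=\; (\phi \hc 1_{\id_C}) \of (\psi \hc \phi) \;=\; (\phi \of \psi) \hc (1_{\id_C} \of \phi). \]
The left factor is $1_f$ by the given factorisation, the right factor is $\phi$, and a final unitor application collapses $1_f \hc \phi$ back to $\phi$. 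Uniqueness of factorisations through the cartesian $\phi$ then forces $\psi \hc \phi = \id_J$.

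For $(3) \Rightarrow (1)$, I would construct the factorisation of an arbitrary cell $\chi$ (with bottom $1_C$, left vertical $f \of h$, right vertical $k$) through $\phi$ explicitly, by horizontally pasting $\chi$ with $\psi$ and appropriate unit cells and then using the horizontal unitors to align boundaries. A symmetric interchange calculation, together with $1_f = \phi \of \psi$, verifies $\phi \of \zeta = \chi$, while uniqueness of $\zeta$ is extracted from $\psi \hc \phi = \id_J$: this identity enables one to recover any candidate factorisation from $\chi$ by the same fixed recipe. The main technical obstacle I anticipate is bookkeeping---setting up the interchange applications requires inserting unit cells in precisely the right positions so that all pastings are well-defined and all shapes match after the unitor identifications; once that is arranged, both directions reduce to routine manipulations of interchange and the unitor coherences.
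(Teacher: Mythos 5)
Your argument is correct, and it is essentially the standard proof of this fact (cf.\ Shulman's Theorem~4.1, which the paper defers to rather than proving the lemma itself): the implications $\text{(cartesian)}\Rightarrow(\psi\hc\phi=\id_J)$ and $\text{(opcartesian)}\Rightarrow(\psi\hc\phi=\id_J)$ follow from uniqueness of factorisations applied to the interchange computations $\phi\of(\psi\hc\phi)=(\phi\of\psi)\hc\phi=1_f\hc\phi\iso\phi$ and its mirror image, while the converse directions produce the required factorisations explicitly as $(\psi\of 1_h)\hc\chi$ (modulo unitors), with uniqueness recovered from $\psi\hc\phi=\id_J$ exactly as you describe. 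The only point to be careful about is your appeal to ``vertical duality'' to transfer $(1)\Leftrightarrow(3)$ to $(2)\Leftrightarrow(3)$: the pure vertical opposite of the companion statement is the \emph{conjoint} statement (the ``horizontal dual'' the paper alludes to just before the lemma), so one should either compose the vertical and horizontal dualities, or---more simply---just run the visibly symmetric direct argument with the roles of top and bottom exchanged, which is what your computations in fact do. This is a cosmetic issue, not a gap.
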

	Thus a double category has all companions and conjoints whenever it has all restrictions or all extensions. Conversely restrictions and extensions can be build up from companions and conjoints as follows.  For a proof see Theorem 4.1 of \cite{Shulman08}.
	\begin{lemma} \label{cartesian and opcartesian cells in terms of companions and conjoints}
		Suppose that the companions and conjoints of the morphisms $\map fAC$ and $\map gBD$ exist. For any $\hmap KCD$ the composite on the left below is cartesian while, for any $\hmap JBA$, the composite on the right is opcartesian.
		\begin{displaymath}
			\begin{tikzpicture}[textbaseline]
				\matrix(m)[math35]{A & C & D & B \\ C & C & D & D \\};
				\path[map]	(m-1-1) edge[barred] node[above] {$f_*$} (m-1-2)
														edge node[left] {$f$} (m-2-1)
										(m-1-2) edge[barred] node[above] {$K$} (m-1-3)
										(m-1-3) edge[barred] node[above] {$g^*$} (m-1-4)
										(m-1-4) edge node[right] {$g$} (m-2-4)
										(m-2-2) edge[barred] node[below] {$K$} (m-2-3);
				\path				(m-1-2) edge[eq] (m-2-2)
										(m-1-3) edge[eq] (m-2-3)
										(m-2-1) edge[eq] (m-2-2)
										(m-2-3) edge[eq] (m-2-4);
				\draw				($(m-1-1)!0.5!(m-2-2)$) node[small] {\textup{cart}}
										($(m-1-3)!0.5!(m-2-4)$) node[small] {\textup{cart}};
			\end{tikzpicture} \qquad\qquad \begin{tikzpicture}[textbaseline]
				\matrix(m)[math35]{B & B & A & A \\ D & B & A & C \\};
				\path[map]	(m-1-1) edge node[left] {$g$} (m-2-1)
										(m-1-2) edge[barred] node[above] {$J$} (m-1-3)
										(m-2-3) edge[barred] node[below] {$f_*$} (m-2-4)
										(m-1-4) edge node[right] {$f$} (m-2-4)
										(m-2-1) edge[barred] node[below] {$g^*$} (m-2-2)
										(m-2-2) edge[barred] node[below] {$J$} (m-2-3);
				\path				(m-1-2) edge[eq] (m-2-2)
										(m-1-3) edge[eq] (m-2-3)
										(m-1-1) edge[eq] (m-1-2)
										(m-1-3) edge[eq] (m-1-4);
				\draw				($(m-1-1)!0.5!(m-2-2)$) node[small] {\textup{opcart}}
										($(m-1-3)!0.5!(m-2-4)$) node[small] {\textup{opcart}};
			\end{tikzpicture}
		\end{displaymath}
	\end{lemma}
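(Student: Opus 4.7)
My plan is to verify the universal property of the restriction $K(f, g)$ directly for the left composite cell, which I write $\Phi\colon f_* \hc K \hc g^* \Rar K$; since any two cartesian cells defining the same restriction factor through each other as invertible horizontal cells, this establishes cartesianness. Given a test cell $\chi\colon H \Rar K$ with horizontal source $\hmap HXY$ and vertical boundary $f \of h$ on the left and $g \of k$ on the right, the task is to produce a unique factorization $\psi\colon H \Rar f_* \hc K \hc g^*$ with vertical source $h$ and target $k$.

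To build $\psi$ I invoke \lemref{unit cell factorisation} applied to $f_*$ and $g^*$, which supplies their opcartesian counterparts: an opcartesian cell with source $1_A$, target $f_*$, and verticals $(\id_A, f)$, and analogously a conjoint opcartesian cell with source $1_B$, target $g^*$, and verticals $(g, \id_B)$. I assemble $\psi$ by pasting $\chi$ between suitably positioned copies of these two opcartesian cells, arranged so that the outer vertical boundary reads $h$ on the left and $k$ on the right while the horizontal target becomes $f_* \hc K \hc g^*$; the horizontal unitor $H \iso 1_X \hc H \hc 1_Y$ is used to reshape the source so that the three horizontal pieces compose correctly.

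To verify $\Phi \of \psi = \chi$ I collapse the outer wings using the two vertical companion and conjoint identities of \lemref{unit cell factorisation}: each pairs the relevant cartesian cell (a component of $\Phi$) with its opcartesian partner (a component of $\psi$) to produce a vertical identity, leaving only $\chi$ in the middle. Uniqueness of $\psi$ follows by the same route, using the horizontal companion and conjoint identities: any competing factorization $\psi'$, when its outer strips are paired back with the opcartesian cells, must coincide with the explicit assembly above.

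The opcartesian assertion for the right composite is proved by the dual argument with the roles of cartesian and opcartesian cells interchanged throughout, the cartesian cells defining $f_*$ and $g^*$ now playing the role that the opcartesian cells played above. The main obstacle in either case is purely diagrammatic, namely arranging the pasting that defines $\psi$ so that all horizontal compositions, unitors, and vertical boundaries line up; once the picture is correctly drawn, the two identities of \lemref{unit cell factorisation} do all the work and no further calculation is needed.
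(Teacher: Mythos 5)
Your argument is correct and is essentially the standard one: the paper does not prove this lemma itself but defers to Theorem 4.1 of Shulman's paper, and your direct verification of the universal property---assembling $\psi$ from the opcartesian cells supplied by \lemref{unit cell factorisation}, collapsing the outer strips with the vertical companion and conjoint identities to get $\Phi \of \psi = \chi$, and using the horizontal identities for uniqueness---is exactly that proof. The one detail worth making explicit is that the opcartesian cell $1_A \Rar f_*$ (with verticals $\id_A$ and $f$) must first be composed vertically with the unit cell $1_h$, and likewise the one for $g^*$ with $1_k$, so that its vertical boundary becomes $(h, f \of h)$ and matches that of $\chi$ before the three pieces can be pasted horizontally; this is the ``suitable positioning'' you allude to, and it causes no difficulty since the vertical identities then yield $1_{f \of h}$ and $1_{g \of k}$ rather than $1_f$ and $1_g$.
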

	In summary the following conditions on a double category $\K$ are equivalent: $\K$ has all companions and conjoints; $\K$ has all restrictions; $\K$ has all extensions. Double categories satisfying these conditions, such as $\Prof$, $\enProf\V$ and $\inProf\E$, are called \emph{equipments}.
	
	Later we will use that,  in an equipment $\K$, choosing a companion $\hmap{f_*}AC$ for each morphism $\map fAC$ induces a pseudofunctor \mbox{$\map{(\dash)_*}{\co{V(\K)}}{H(\K)}$}, where $\co{V(\K)}$ is obtained from $V(\K)$ (the vertical $2$-category contained in $\K$) by reversing the directions of its cells, by mapping each vertical cell $\cell\phi fg$ of $\K$ to the horizontal cell $\cell{\phi_*}{g_*}{f_*}$ that is defined as the unique factorisation in
	\begin{equation} \label{companion pseudofunctor}
		\begin{tikzpicture}[textbaseline]
			\matrix[math35](m){A & A & C \\ C & C & C \\};
			\path[map]	(m-1-1) edge node[left] {$f$} (m-2-1)
									(m-1-2) edge[barred] node[above] {$g_*$} (m-1-3)
													edge node[right] {$g$} (m-2-2);
			\path				(m-1-1) edge[eq] (m-1-2)
									(m-1-3) edge[eq] (m-2-3)
									(m-2-1) edge[eq] (m-2-2)
									(m-2-2) edge[eq] (m-2-3);
			\path[map, transform canvas={xshift=1.75em}]	(m-1-1) edge[cell] node[right] {$\phi$} (m-2-1);
			\draw[transform canvas={shift={(1.75em, -1.625em)}}]	(m-1-2) node[small] {cart};
		\end{tikzpicture} = \begin{tikzpicture}[textbaseline]
			\matrix[math35](m){A & C \\ A & C \\ C & C. \\};
			\path[map]	(m-1-1) edge[barred] node[above] {$g_*$} (m-1-2)
									(m-2-1) edge[barred] node[below] {$f_*$} (m-2-2)
													edge node[left] {$f$} (m-3-1);
			\path				(m-1-1) edge[eq] (m-2-1)
									(m-1-2) edge[eq] (m-2-2)
									(m-2-2) edge[eq] (m-3-2)
									(m-3-1) edge[eq] (m-3-2);
			\path[map, transform canvas={xshift=1.75em}]	(m-1-1) edge[cell] node[right] {$\phi_*$} (m-2-1);
			\draw[transform canvas={shift={(1.75em, -1.625em)}}]	(m-2-1) node[small] {cart};
		\end{tikzpicture}
	\end{equation}
	For morphisms $\map fAC$ and $\map hCE$ the `compositor' $f_* \hc h_* \iso (h \of f)_*$ of this pseudofunctor is given as follows. Combining the previous lemma with the pasting lemma we find that the composite of the cartesian cells defining $f_*$ and $h_*$ is again cartesian, so that it defines $f_* \hc h_*$ as the companion of $h \of f$, and hence factors as an invertible horizontal cell $f_* \hc h_* \iso (h \of f)_*$ through the cartesian cell that defines the chosen companion of $h \of f$; these cells form the compositor of $(\dash)_*$.
	
	Analogously, choosing conjoints $\hmap{f^*}CA$ for each $\map fAC$ in $\K$ induces a pseudofunctor $\map{(\dash)^*}{\op{V(\K)}}{H(\K)}$, where $\op{V(\K)}$ is obtained from $V(\K)$ by reversing the directions of its morphisms. Notice that both $(\dash)_*$ and $(\dash)^*$ are locally full and faithful: for instance, the inverse to $\phi \mapsto \phi_*$ is given by mapping each horizontal cell $g_* \Rightarrow f_*$ to its composite with both the opcartesian cell that defines $g_*$ and the cartesian cell defining $f_*$. We record the following consequence for later use.
	\begin{lemma} \label{invertible companion cell}
		In a double category consider a vertical cell $\cell\phi fg$ between morphisms $\map fAC$ and $\map gAC$ that have companions $f_*$ and $g_*$. The cell $\phi$ is invertible precisely if the horizontal cell $\phi_*$, defined by the factorisation above, is invertible. 
	\end{lemma}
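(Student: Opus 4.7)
The plan is to derive this immediately from the local full-and-faithfulness of the pseudofunctor $(\dash)_*\colon \co{V(\K)} \to H(\K)$ that was observed in the paragraph preceding the statement. Indeed, any locally fully faithful (pseudo\=/)functor both preserves and reflects invertibility of $2$\ndash cells, and applying this to $\phi$ gives precisely the lemma. Thus no genuine obstacle arises; the work is only in unpacking what the two directions mean.

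For the forward direction, if $\cell\psi gf$ is a two-sided inverse of $\phi$ in $V(\K)$, then applying $(\dash)_*$ (which reverses both the vertical morphisms and the direction of cells, hence the superscript $\co{}$) yields $\cell{\psi_*}{f_*}{g_*}$, and the (pseudo-)functoriality combined with normalisation of $(\dash)_*$ on identity cells gives
\[
	\phi_* \of \psi_* = (\psi \of \phi)_* = (\id_f)_* = \id_{f_*} \qquad \text{and} \qquad \psi_* \of \phi_* = (\phi \of \psi)_* = \id_{g_*},
\]
so that $\psi_*$ inverts $\phi_*$.

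For the converse, let $\cell\xi{f_*}{g_*}$ be a two-sided inverse of $\phi_*$. By the local fullness of $(\dash)_*$ there is a unique cell $\cell\psi gf$ with $\psi_* = \xi$. From the identities
\[
	(\psi \of \phi)_* = \phi_* \of \psi_* = \phi_* \of \xi = \id_{f_*} = (\id_f)_* \qquad \text{and} \qquad (\phi \of \psi)_* = \xi \of \phi_* = \id_{g_*} = (\id_g)_*,
\]
local faithfulness of $(\dash)_*$ then yields $\psi \of \phi = \id_f$ and $\phi \of \psi = \id_g$, so $\psi$ inverts $\phi$ as required.

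The only point that deserves care, and which I would simply cite, is the explicit description of the inverse to $\phi \mapsto \phi_*$ recalled in the paragraph preceding the lemma: that composing a horizontal cell $g_* \Rar f_*$ with the opcartesian cell defining $g_*$ and the cartesian cell defining $f_*$ produces the unique vertical cell mapped to it under $(\dash)_*$. This in turn is a direct consequence of the companion identities of \lemref{unit cell factorisation}.
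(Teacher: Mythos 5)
Your argument is exactly the one the paper intends: the lemma is stated as a recorded consequence of the local full-and-faithfulness of the pseudofunctor $(\dash)_*$ noted in the preceding paragraph, and your unpacking of preservation and reflection of invertibility (using that the local functors of a pseudofunctor strictly preserve composition and identities of cells) is correct and complete.
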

	
	The following simple consequence of \lemref{cartesian and opcartesian cells in terms of companions and conjoints} will be helpful as well.
	\begin{lemma} \label{horizontal composite of cartesian cells}
		Consider the following horizontal composite. If $f$ has a companion, $h$ has a conjoint and $\phi$ and $\psi$ are cartesian, then the composite itself is cartesian.
		\begin{displaymath}
			\begin{tikzpicture}
				\matrix(m)[math35]{A & B & E \\ C & B & F \\};
				\path[map]	(m-1-1) edge[barred] node[above] {$J$} (m-1-2)
														edge node[left] {$f$} (m-2-1)
										(m-1-2) edge[barred] node[above] {$H$} (m-1-3)
										(m-1-3) edge node[right] {$h$} (m-2-3)
										(m-2-1) edge[barred] node[below] {$K$} (m-2-2)
										(m-2-2) edge[barred] node[below] {$L$} (m-2-3);
				\path				(m-1-2) edge[eq] (m-2-2);
				\path[transform canvas={xshift=1.75em}]	(m-1-1) edge[cell] node[right] {$\phi$} (m-2-1)
										(m-1-2) edge[cell] node[right] {$\psi$} (m-2-2);
			\end{tikzpicture}
		\end{displaymath}
	\end{lemma}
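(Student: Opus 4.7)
The plan is to reduce the claim to \lemref{cartesian and opcartesian cells in terms of companions and conjoints} by rewriting both $\phi$ and $\psi$ in terms of the canonical cartesian cells defined by companions and conjoints, and then use that the horizontal composite of such canonical cells is itself of the form covered by that lemma.

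First I would apply \lemref{cartesian and opcartesian cells in terms of companions and conjoints} with $g = \id_B$ (whose conjoint is $1_B$) to obtain a cartesian cell $\phi'$, of the form $f_* \hc K \Rightarrow K$ with vertical legs $f$ and $\id_B$, namely the horizontal composite of the cartesian cell defining $f_*$ with the identity cell on $K$. Since $\phi$ itself is cartesian with the same bottom $K$ and the same vertical legs, the essential uniqueness of cartesian cells yields an invertible horizontal cell $\alpha \colon J \xRar\iso f_* \hc K$ with $\phi = \phi' \of \alpha$. Dually, applying the lemma to $\id_B$ and $h$ (the former having identity companion $1_B$), I obtain a cartesian cell $\psi' \colon L \hc h^* \Rightarrow L$ together with an invertible horizontal cell $\beta \colon H \xRar\iso L \hc h^*$ such that $\psi = \psi' \of \beta$.

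Next I would use functoriality of horizontal composition with respect to vertical composition to rewrite
$\phi \hc \psi = (\phi' \hc \psi') \of (\alpha \hc \beta)$.
The composite $\phi' \hc \psi'$ unfolds, up to the horizontal associator, into the horizontal composite of the cartesian cell defining $f_*$, the identity cell on $K \hc L$, and the cartesian cell defining $h^*$; this is precisely the shape of cell that \lemref{cartesian and opcartesian cells in terms of companions and conjoints} asserts to be cartesian (applied to the vertical morphisms $f$ and $h$ and the horizontal morphism $K \hc L$). Finally, $\alpha \hc \beta$ is an invertible horizontal cell, being the horizontal composite of two invertibles, and vertical composition of a cartesian cell with an invertible horizontal cell on top always preserves cartesianness (routine verification of the universal property). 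Hence $\phi \hc \psi$ is cartesian.

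I do not expect any serious obstacle. The only mildly fiddly point is bookkeeping of the horizontal associator and unitors when identifying $\phi' \hc \psi'$ with the cell produced by the lemma; but since these coherence isomorphisms are invertible horizontal cells, they can be absorbed into the factor $\alpha \hc \beta$ without affecting the conclusion.
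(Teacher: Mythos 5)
Your proposal is correct and follows essentially the same route as the paper: factor $\phi$ and $\psi$ through the canonical cartesian cells $f_* \hc K \Rightarrow K$ and $L \hc h^* \Rightarrow L$ supplied by \lemref{cartesian and opcartesian cells in terms of companions and conjoints}, obtaining invertible horizontal cells $J \iso f_* \hc K$ and $H \iso L \hc h^*$, and then observe that $\phi \hc \psi$ factors as an invertible horizontal cell through the cartesian composite of $\id_{K \hc L}$ with the cartesian cells defining $f_*$ and $h^*$. The coherence bookkeeping you flag is harmless, exactly as you say.
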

	\begin{proof}
		By \lemref{cartesian and opcartesian cells in terms of companions and conjoints} the restriction of $K$ along $f$ can be given as the composite $f_* \hc K$ while that of $L$ along $h$ can be given as $L \hc h^*$. Thus there exist invertible horizontal cells  $J \iso f_* \hc K$ and $H \iso L \hc h^*$ obtained by factorising $\phi$ through the composite of the cartesian cell defining $f_*$ and $\id_K$, and $\psi$ through that of $\id_L$ and the cartesian cell defining $h^*$. We conclude that the composite $\phi \hc \psi$ itself factors as an invertible horizontal cell through the composite of $\id_{K \hc L}$ and the cartesian cells defining $f_*$ and $h^*$; since the latter composite is cartesian by \lemref{cartesian and opcartesian cells in terms of companions and conjoints}, so is the former.
	\end{proof}
	
	Closing this section, the next lemma reformulates the notion of adjunction in the vertical $2$-category $V(\K)$, in terms of the cartesian cells of $\K$.
	\begin{lemma} \label{unit of adjunction as cartesian cell}
		Consider morphisms $\map fAC$ and $\map gCA$ in a double category $\K$ and assume that the companion of $f$ exists. The cell $\phi$, on the left-hand side below, forms the unit defining an adjunction $f \ladj g$ in $V(\K)$ if and only if its factorisation $\phi'$, as shown, is cartesian in $\K$.
		\begin{displaymath}
	  	\begin{tikzpicture}[textbaseline]
    		\matrix(m)[math35]{A & A \\  & C \\ A & A \\};
    		\path[map]  (m-1-2) edge node[right] {$f$} (m-2-2)
        		        (m-2-2) edge node[right] {$g$} (m-3-2);
    		\path				(m-1-1) edge[eq] (m-1-2)
    												edge[eq] (m-3-1)
    								(m-3-1) edge[eq] (m-3-2);
    		\path[transform canvas={shift={($(m-2-1)!0.5!(m-1-1)$)}}] (m-2-2) edge[cell] node[right] {$\phi$} (m-3-2);
  		\end{tikzpicture} = \begin{tikzpicture}[textbaseline]
    		\matrix(m)[math35]{A & A \\ A & C \\ A & A \\};
    		\path[map]  (m-1-2) edge node[right] {$f$} (m-2-2)
            		    (m-2-1) edge[barred] node[below] {$f_*$} (m-2-2)
            		    (m-2-2) edge node[right] {$g$} (m-3-2);
        \path				(m-1-1) edge[eq] (m-1-2)
        										edge[eq] (m-2-1)
        						(m-2-1) edge[eq] (m-3-1)
        						(m-3-1) edge[eq] (m-3-2)
        						($(m-1-1)!0.5!(m-2-2)$) node[small] {\textup{opcart}};
    		\path[transform canvas={shift=(m-2-1))}]
        		        (m-2-2) edge[transform canvas={yshift=-0.3em}, cell] node[right] {$\phi'$} (m-3-2);
  		\end{tikzpicture}
	  \end{displaymath}
	\end{lemma}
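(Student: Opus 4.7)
My plan is to translate both the adjunction condition for $\phi$ and the cartesianness of $\phi'$ into the \emph{same} universal property---a natural bijection between certain hom-sets of vertical cells---so that the two conditions become manifestly equivalent.

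First, by the companion identities of \lemref{unit cell factorisation} applied to $f$, the top cell $\psi$ in the right-hand factorisation is opcartesian. Its universal property both produces $\phi'$ uniquely from $\phi$ (justifying the decomposition as stated) and sets up a bijection between vertical cells $\cell{\phi}{\id_A}{g \of f}$ in $V(\K)$ and cells $\phi'$ of the shape on the right.

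Next I would recast the condition that $\phi$ is the unit of an adjunction $f \ladj g$: this is equivalent to asking that, for every pair of vertical morphisms $\map hXA$ and $\map kXC$, the assignment $\gamma \mapsto (g \of \gamma) \of (\phi \of h)$ defines a bijection from vertical cells $f \of h \Rar k$ to vertical cells $h \Rar g \of k$, naturally in $h$ and $k$. On the other hand, the cartesian universal property of $\phi'$ puts vertical cells $h \Rar g \of k$ in natural bijection with double cells $1_X \Rar f_*$ with vertical legs $h$ and $k$, via factorisation through $\phi'$. Composing with the cartesian cell $\phi_f \colon f_* \Rar 1_C$ defining the companion of $f$---whose own universal property puts the latter cells in bijection with vertical cells $f \of h \Rar k$---yields a natural bijection between vertical cells $h \Rar g \of k$ and vertical cells $f \of h \Rar k$ exactly when $\phi'$ is cartesian.

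It then suffices to identify this composite bijection with the adjunction map $\gamma \mapsto (g \of \gamma) \of (\phi \of h)$; the equivalence of the two conditions then follows from both sides being characterised by the same natural bijection. The main obstacle is this identification, which amounts to a pasting computation in $\K$: using $\phi = \phi' \of \psi$, the whiskering $\phi \of h$ unfolds to a pasting involving $\psi$ and an identity cell on $h$, while $g \of \gamma$ unfolds, via $\phi_f$, to the corresponding factorisation of $\gamma$ through $\phi_f$. Then the companion identities applied to $f$ allow the resulting diagram to collapse to the factorisation $\phi' \of \beta$ giving the value of the composite bijection. Once this agreement is verified, naturality in $h$ and $k$ is automatic from the uniqueness in the two universal properties.
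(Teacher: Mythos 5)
The direction ``$\phi'$ cartesian $\Rightarrow$ $\phi$ is a unit'' of your argument is sound: the restricted instances of the universal properties of $\phi'$ and of the cartesian cell defining $f_*$ do compose to the adjunction bijection, and the identification of that composite with $\gamma \mapsto (g \of \gamma) \of (\phi \of h)$ is the routine pasting computation you describe. The converse direction, however, has a genuine gap. You claim the natural bijection between vertical cells $h \Rar g \of k$ and $f \of h \Rar k$ holds ``exactly when $\phi'$ is cartesian'', but only the ``when'' half is justified. Cartesianness of $\phi'$ is a universal property against cells $\cell\chi H{1_A}$ whose horizontal source $\hmap HXY$ is an \emph{arbitrary} horizontal morphism, whereas your bijection only tests $\phi'$ against cells whose horizontal source is a unit $1_X$ (these are exactly the vertical cells $h \Rar g \of k$). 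Since the lemma assumes only that the companion of $f$ exists---$\K$ is not assumed to be an equipment---there is no way to reduce a general $H$ to units, and the restricted factorisation property you establish is a priori strictly weaker than cartesianness. So from ``$\phi$ is a unit'' you have only deduced this weaker property, not the conclusion of the lemma (which is later used in its full strength, e.g.\ to feed $\phi'$ into \lemref{conjoints define pointwise left Kan extensions}).

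To close the gap you must bring in the counit. From the adjunction extract $\cell\theta{f \of g}{\id_C}$ and the triangle identities; then either (i) for an arbitrary cell $\cell\chi H{1_A}$ with vertical boundaries $h$ and $g \of k$, construct its factorisation through $\phi'$ explicitly as a horizontal composite of $\chi$, the opcartesian cell $\psi$ defining $f_*$, and the whiskering $\theta \of 1_k$ (inserting the unitors $H \iso H \hc 1_Y$ and $f_* \hc 1_C \iso f_*$), and verify existence and uniqueness using the triangle identities; or (ii), closer to the paper's toolkit, form the cell $\cell{\psi''}{1_C}{f_*}$ with vertical boundaries $g$ and $\id_C$ by pasting $\theta$ onto $\psi \of 1_g$, check that the pair $(\psi'', \phi')$ satisfies the conjoint identities, and invoke the conjoint analogue of \lemref{unit cell factorisation} to conclude that $\phi'$ is cartesian. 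For what it is worth, the paper states this lemma without proof; the standard argument (compare Theorem 4.1 of Shulman's paper cited there, relating adjunctions to conjoints of companions) proceeds exactly via the counit as in (i) or (ii), not via the vertical-cell bijection alone.
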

	
	\section{Kan extensions}
	Next we turn to the notion of Kan extension in double categories, that was introduced in \cite{Grandis-Pare08}, as well as the stronger notion of pointwise Kan extension, introduced in \cite{Koudenburg14a}.
	
	A cell $\eta$ in a double category, as in the right-hand side below, is said to define $l$ as the \emph{left Kan extension} of $d$ along $J$ if any cell $\phi$ below factors uniquely through $\eta$ as a vertical cell $\psi$, as shown.
	\begin{displaymath}
  	\begin{tikzpicture}[textbaseline]
	    \matrix(m)[math35]{A & B \\ M & M \\};
	    \path[map]  (m-1-1) edge[barred] node[above] {$J$} (m-1-2)
	                        edge node[left] {$d$} (m-2-1)
	                (m-1-2) edge node[right] {$k$} (m-2-2);
	    \path				(m-2-1) edge[eq] (m-2-2);
	    \path[transform canvas={shift={($(m-1-2)!(0,0)!(m-2-2)$)}}] (m-1-1) edge[cell] node[right] {$\phi$} (m-2-1);
	  \end{tikzpicture} = \begin{tikzpicture}[textbaseline]
			\matrix(m)[math35]{A & B & B \\ M & M & M \\};
			\path[map]	(m-1-1) edge[barred] node[above] {$J$} (m-1-2)
													edge node[left] {$d$} (m-2-1)
									(m-1-2) edge node[right] {$l$} (m-2-2)
									(m-1-3) edge node[right] {$k$} (m-2-3);
			\path				(m-1-2) edge[eq] (m-1-3)
									(m-2-1) edge[eq] (m-2-2)
									(m-2-2) edge[eq] (m-2-3);
			\path[transform canvas={shift={($(m-1-2)!0.5!(m-2-3)$)}}] (m-1-1) edge[cell] node[right] {$\eta$} (m-2-1)
									(m-1-2) edge[cell] node[right] {$\psi$} (m-2-2);
		\end{tikzpicture}
  \end{displaymath}
  Using the universal property of opcartesian cells we see immediately that, in any double category $\K$, left Kan extensions along companions coincide with left Kan extensions in the $2$-category $V(\K)$; the latter in the classical sense (see e.g.\ Section~2.2 of \cite{Lack09}):
  \begin{lemma} \label{Kan extensions along companions}
  	Assume that the companion of $\map jAB$ exists in a double category $\K$. The vertical cell $\eta$ on the left-hand side below defines $l$ as the left Kan extension of $d$ along $j$ in $V(\K)$ precisely if its factorisation $\eta'$, as shown, defines $l$ as the left Kan extension of $d$ along $j_*$ in $\K$.
		\begin{displaymath}
			\begin{tikzpicture}[textbaseline]
    		\matrix(m)[math35]{A & A \\ \phantom B & B \\ M & M \\};
    		\path[map]  (m-1-1) edge node[left] {$d$} (m-3-1)
       			        (m-1-2) edge node[right] {$j$} (m-2-2)
       			        (m-2-2) edge node[right] {$l$} (m-3-2);
      	\path				(m-1-1) edge[eq] (m-1-2)
      							(m-3-1) edge[eq] (m-3-2);
    		\path[transform canvas={shift={($(m-2-1)!0.5!(m-1-1)$)}}] (m-2-2) edge[cell] node[right] {$\eta$} (m-3-2);
  		\end{tikzpicture} = \begin{tikzpicture}[textbaseline]
				\matrix(m)[math35]{A & A \\ A & B \\ M & M \\};
				\path[map]	(m-1-2) edge node[right] {$j$} (m-2-2)
										(m-2-1) edge[barred] node[below] {$j_*$} (m-2-2)
														edge node[left] {$d$} (m-3-1)
										(m-2-2) edge node[right] {$l$} (m-3-2);
				\path				(m-1-1) edge[eq] (m-1-2)
														edge[eq] (m-2-1)
										(m-3-1) edge[eq] (m-3-2);
				\path[transform canvas={shift=(m-2-1), yshift=-2pt}]	(m-2-2) edge[cell] node[right] {$\eta'$} (m-3-2);
				\draw				($(m-1-1)!0.5!(m-2-2)$)	node[small] {\textup{opcart}};
			\end{tikzpicture}
		\end{displaymath}
	\end{lemma}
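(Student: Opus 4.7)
The plan is to transfer the universal property across the bijection induced by the opcartesian cell defining $j_*$. Write $\psi_j$ for that opcartesian cell, with horizontal source $1_A$, target $j_*$ and vertical sides $\id_A$ and $j$; by construction $\eta = \psi_j \circ \eta'$. The universal property of $\psi_j$ yields, for each $\map kBM$, a bijection between cells $\phi'\colon j_* \Rar 1_M$ in $\K$ with vertical sides $d$ and $k$ on the one hand, and vertical cells $1_A \Rar 1_M$ with vertical sides $d$ and $k \circ j$ (equivalently $V(\K)$-cells $\phi\colon d \Rar k \circ j$) on the other; the bijection sends $\phi'$ to $\phi = \psi_j \circ \phi'$, and in particular it sends $\eta'$ to $\eta$.

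The main step is to check that this bijection intertwines the two notions of factorisation at play. For a vertical cell $\cell\psi lk$, the classical whiskering $\psi \cdot j$ can be described as the vertical composite $\id_j \circ \psi$ in $\K$, where $\id_j$ is the double-category identity cell on $j$; moreover $V(\K)$-vertical composition of vertical cells coincides with $\K$-horizontal composition of the same. Using the right unitor $\psi_j \hc \id_j \iso \psi_j$, the interchange law in $\K$ then gives
\[
	\psi_j \circ (\eta' \hc \psi) \iso (\psi_j \hc \id_j) \circ (\eta' \hc \psi) = (\psi_j \circ \eta') \hc (\id_j \circ \psi) = \eta \hc (\psi \cdot j) = (\psi \cdot j) \circ \eta,
\]
where the last equality invokes the identification just mentioned. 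Hence, under the bijection, $\phi' = \eta' \hc \psi$ corresponds to $\phi = (\psi \cdot j) \circ \eta$, with the same cell $\psi$ appearing on both sides.

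Combining the two points, a cell $\phi'$ factors uniquely through $\eta'$ in the double-category sense precisely when its image $\phi$ factors uniquely through $\eta$ in the classical $V(\K)$ sense, and the factoring cell $\psi$ is literally the same. This establishes the stated equivalence in both directions simultaneously. The only real subtlety is the bookkeeping of the unitor isomorphism $\psi_j \hc \id_j \iso \psi_j$ in the interchange step above, which is routine.
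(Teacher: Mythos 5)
Your argument is correct and is precisely the reasoning the paper leaves implicit: it asserts the lemma follows ``immediately'' from the universal property of the opcartesian cell defining $j_*$, which is exactly the bijection you set up, with the interchange-plus-unitor computation supplying the compatibility of the two factorisation problems. The only caveats are notational (your $\psi_j \circ \eta'$ is written in diagrammatic order, opposite to the paper's $\eta' \of \psi_j$) and the unitor bookkeeping you already flag, neither of which affects the proof.
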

	
	We say that the cell $\eta$ in the right-hand side below defines $l$ as the \emph{pointwise} left Kan extension of $d$ along $J$ if each cell $\phi$, this time of the more general form below, factors through $\eta$ as a unique cell $\psi$ as well, as shown.
	\begin{displaymath}
  	\begin{tikzpicture}[textbaseline]
			\matrix(m)[math35]{A & B & C \\ M & & M \\};
			\path[map]	(m-1-1) edge[barred] node[above] {$J$} (m-1-2)
													edge node[left] {$d$} (m-2-1)
									(m-1-2) edge[barred] node[above] {$H$} (m-1-3)
									(m-1-3) edge node[right] {$k$} (m-2-3)
									(m-1-2) edge[cell] node[right] {$\phi$} (m-2-2);
			\path				(m-2-1) edge[eq] (m-2-3);
		\end{tikzpicture} = \begin{tikzpicture}[textbaseline]
			\matrix(m)[math35]{A & B & C \\ M & M & M \\};
			\path[map]	(m-1-1) edge[barred] node[above] {$J$} (m-1-2)
													edge node[left] {$d$} (m-2-1)
									(m-1-2) edge[barred] node[above] {$H$} (m-1-3)
													edge node[right] {$l$} (m-2-2)
									(m-1-3) edge node[right] {$k$} (m-2-3);
			\path				(m-2-1) edge[eq] (m-2-2)
									(m-2-2) edge[eq] (m-2-3);
			\path[transform canvas={shift={($(m-1-2)!0.5!(m-2-3)$)}}] (m-1-1) edge[cell] node[right] {$\eta$} (m-2-1)
									(m-1-2) edge[cell] node[right] {$\psi$} (m-2-2);
		\end{tikzpicture}
  \end{displaymath}
  It is clear that every pointwise left Kan extension is in particular a left Kan extension. As usual any two cells defining the same (pointwise) left Kan extension factor through each other uniquely as invertible vertical cells.
  
  In a moment we will discuss how the above notion of pointwise left Kan extension generalises several similar and familiar notions. First we consider some of its properties. Cells defining pointwise left Kan extensions satisfy a pasting lemma, like (op-)cartesian cells do:
  \begin{lemma}[Pasting lemma]
  	Suppose that in the right-hand side above the cell $\eta$ defines $l$ as the pointwise left Kan extension of $d$ along $J$. Then $\phi$ defines $k$ as the (pointwise) left Kan extension of $d$ along $J \hc H$ precisely if $\psi$ defines $k$ as the (pointwise) left Kan extension of $l$ along $H$.
  \end{lemma}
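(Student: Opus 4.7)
The plan is to exploit the pointwise universal property of $\eta$ to translate every test cell for $\phi$ (whose horizontal source is $J \hc H$) into a test cell for $\psi$ (whose horizontal source is $H$), and conversely. The given factorisation $\phi = \eta \hc \psi$ is itself exactly the unique factorisation of $\phi$ through $\eta$ provided by the pointwise property of $\eta$ applied with extra horizontal morphism $H$, so this translation is strictly invertible.

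For the $(\Leftarrow)$ direction of the non-pointwise statement, take any test cell $\chi$ for $\phi$, so $\chi$ has horizontal source $J \hc H$ and vertical target some $k' \colon C \to M$. The pointwise property of $\eta$ applied with second horizontal morphism $H$ yields a unique $\chi' \colon l \hc H \Rightarrow k'$ with $\chi = \eta \hc \chi'$. Next apply the assumed Kan extension property of $\psi$ to $\chi'$, obtaining a unique vertical cell $\tau \colon k \Rightarrow k'$ realising $\chi'$ as the vertical paste of $\tau$ onto $\psi$. By interchange, horizontally composing with $\eta$ rewrites $\chi = \eta \hc \chi'$ as the vertical paste of $\tau$ onto $\eta \hc \psi = \phi$, as desired; uniqueness of $\tau$ follows by reversing the two universal factorisations.

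For the converse $(\Rightarrow)$, given any test cell $\chi' \colon l \hc H \Rightarrow k'$ for $\psi$, horizontally precompose with $\eta$ to form $\eta \hc \chi'$, and apply the assumed property of $\phi$ to obtain a unique vertical $\tau \colon k \Rightarrow k'$ realising $\eta \hc \chi'$ as the vertical paste of $\tau$ onto $\phi$. Interchange again rewrites this paste as $\eta$ horizontally composed with the vertical paste of $\tau$ onto $\psi$, and the uniqueness clause in the pointwise property of $\eta$ identifies $\chi'$ with that latter cell; uniqueness of $\tau$ is immediate.

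The pointwise versions of both implications are handled by permitting the test cell to carry an additional horizontal morphism $K \colon C \to D$ on the right. The same two-step scheme goes through essentially unchanged: first strip off $\eta$ using its pointwise property with combined extra data $H \hc K$, then finish using the pointwise property of $\psi$ or of $\phi$ with extra data $K$. The main obstacle throughout is simply bookkeeping---keeping straight which cell plays which role and systematically converting horizontal-then-vertical pastes into vertical-then-horizontal ones via interchange; no new ideas are required beyond iterating the universal properties already in hand.
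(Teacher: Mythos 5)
Your argument is correct, and it is the standard one: the paper itself states this pasting lemma without proof, and the intended argument is exactly the two-step translation of test cells you give---strip off $\eta$ using its pointwise property with auxiliary horizontal data $H$ (or $H\hc K$), then invoke the universal property of $\psi$ (or of $\phi$), with uniqueness obtained by reversing the two factorisations. The only cosmetic point is that the factorising cell $\tau$ is composed \emph{horizontally} onto $\psi$ as a cell with unit horizontal source and target, so what you call ``interchange'' is really associativity and unit coherence of horizontal composition; this does not affect the argument.
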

  
  The following lemma is easily proved using the conjoint identities; see the discussion preceding \lemref{unit cell factorisation}.
  \begin{lemma} \label{conjoints define pointwise left Kan extensions}
  	Given a composable pair $\map fCB$ and $\map lBM$, assume that the conjoint $\hmap {f^*}BC$ of $f$ exists. The composite below defines $l \of f$ as the pointwise left Kan extension of $l$ along $f^*$.
  	\begin{displaymath}
  		\begin{tikzpicture}
  			\matrix(m)[math35]{B & C \\ B & B \\ M & M \\};
  			\path[map]	(m-1-1) edge[barred] node[above] {$f^*$} (m-1-2)
  									(m-1-2) edge node[right] {$f$} (m-2-2)
  									(m-2-1) edge node[left] {$l$} (m-3-1)
  									(m-2-2) edge node[right] {$l$} (m-3-2);
  			\path				(m-1-1) edge[eq] (m-2-1)
  									(m-2-1) edge[eq] (m-2-2)
  									(m-3-1) edge[eq] (m-3-2);
  			\draw[transform canvas={shift={(1.75em, -1.625em)}}]	(m-1-1) node[small] {\textup{cart}};
  		\end{tikzpicture}
  	\end{displaymath}
  \end{lemma}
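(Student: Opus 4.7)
The plan is to verify the pointwise left Kan extension universal property of the displayed cell by exhibiting an explicit inverse bijection via the opcartesian cell dual to the given cartesian one. I will write $\phi_0$ for the cartesian cell displayed in the statement (defining $f^*$ as the restriction of $1_B$ along $\id_B$ and $f$), and $\psi_0$ for the corresponding opcartesian cell defining $f^*$ as the extension of $1_C$ along $f$ and $\id_C$. The cell $\psi_0$ exists by the horizontal dual of \lemref{unit cell factorisation}, which moreover asserts that the vertical composite of $\psi_0$ on top of $\phi_0$ equals the unit cell $1_f$. The displayed cell is the vertical composite of $\phi_0$ on top of the identity cell $1_l$ on $l$; viewed as a single $2 \times 2$ cell it has horizontal source $f^*$, vertical legs $l$ and $l \of f$, and target $1_M$, and I denote it by $\eta$. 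By \lemref{cartesian and opcartesian cells in terms of companions and conjoints}, applied to $f$ and $\id_E$, the horizontal composite $\psi_0 \hc 1_H$ is itself opcartesian --- here $(\id_E)_* = 1_E$ makes the second factor trivial, and only the assumed conjoint $f^*$ is needed.

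Given a test cell $\phi$ with horizontal source $f^* \hc H$, vertical legs $l$ and $k$, and target $1_M$, opcartesianness of $\psi_0 \hc 1_H$ yields a unique cell $\tilde\psi$ from $1_C \hc H$ to $1_M$ with legs $l \of f$ and $k$ whose vertical composite below $\psi_0 \hc 1_H$ equals $\phi$. Absorbing the horizontal unitor $1_C \hc H \iso H$ converts $\tilde\psi$ into a cell $\psi$ from $H$ to $1_M$ with legs $l \of f$ and $k$; this is the candidate factorisation. To verify that $\eta \hc \psi = \phi$, I would precompose $\eta \hc \psi$ with $\psi_0 \hc 1_H$ from above and apply the interchange law: the precomposition splits as the horizontal composite of $\psi_0$ on top of $\eta$ and $1_H$ on top of $\psi$. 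The first factor equals $\psi_0$ on top of $\phi_0$ on top of $1_l$, which is $1_f$ on top of $1_l = 1_{l \of f}$ by the identity from \lemref{unit cell factorisation} noted above; the second factor is $\psi$ itself. The precomposition therefore equals $1_{l \of f} \hc \psi$, which reduces to $\tilde\psi$ via the unitor --- matching, by construction, the precomposition of $\phi$. By the uniqueness clause of the opcartesian factorisation, $\eta \hc \psi = \phi$, and the uniqueness of $\psi$ follows similarly (any other factorisation yields the same precomposition).

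The main technical obstacle will be the bookkeeping with horizontal unitors, and reconciling the displayed $3 \times 2$ picture of $\eta$ with the $2 \times 2$ aggregate needed for the interchange step. Once that is handled, the verification boils down to the unit factorisation $\psi_0$ on top of $\phi_0 = 1_f$ from \lemref{unit cell factorisation} together with the interchange law.
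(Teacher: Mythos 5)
Your argument is correct and is exactly the route the paper intends: the paper gives no detailed proof, merely noting that the lemma ``is easily proved using the conjoint identities'', and your combination of the opcartesian cell $\psi_0$, the identity $\phi_0 \of \psi_0 = 1_f$, the opcartesianness of $\psi_0 \hc 1_H$ (via \lemref{cartesian and opcartesian cells in terms of companions and conjoints}) and the interchange law is precisely that. One slip of phrasing: opcartesianness of $\psi_0 \hc 1_H$ is not what ``yields'' $\tilde\psi$ --- that cell is simply the vertical composite $\phi \of (\psi_0 \hc 1_H)$, which is indeed how you use it later --- the universal property is only needed at the very end, where the uniqueness of factorisations through $\psi_0 \hc 1_H$ lets you conclude $\eta \hc \psi = \phi$ from the equality of the two precomposites.
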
 
  
  Pointwise left Kan extensions deserve their name because of the following lemma, which says that `any restriction of a pointwise left Kan extension is again a pointwise left Kan extension'. For a proof use that $J(\id, f) \iso J \hc f^*$ by \lemref{cartesian and opcartesian cells in terms of companions and conjoints} and the previous two lemmas.
  \begin{lemma}\label{restriction of pointwise left Kan extensions}
  	Assume that the cell $\eta$ in the composite below defines $l$ as the pointwise left Kan extension of $d$ along $J$ and let $\map fCB$ be a morphism whose conjoint $\hmap{f^*}BC$ exists. The full composite defines $l \of f$ as the pointwise left Kan extension of $d$ along the restriction $J(\id, f)$.
  	\begin{displaymath}
  		\begin{tikzpicture}
				\matrix(m)[math35]{A & C \\ A & B \\ M & M \\};
				\path[map]	(m-1-1) edge[barred] node[above] {$J(\id, f)$} (m-1-2)
										(m-1-2)	edge node[right] {$f$} (m-2-2)
										(m-2-1) edge[barred] node[below] {$J$} (m-2-2)
														edge node[left] {$d$} (m-3-1)
										(m-2-2) edge node[right] {$l$} (m-3-2);
				\path				(m-1-1) edge[eq] (m-2-1)
										(m-3-1) edge[eq] (m-3-2);
				\path[transform canvas={shift=(m-2-1), yshift=-2pt}]	(m-2-2) edge[cell] node[right] {$\eta$} (m-3-2);
				\draw				($(m-1-1)!0.5!(m-2-2)$)	node[small] {\textup{cart}};
			\end{tikzpicture}
  	\end{displaymath}
  \end{lemma}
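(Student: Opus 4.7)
The plan is to follow the hint: reduce the restriction $J(\id, f)$ to the horizontal composite $J \hc f^*$, apply the pasting lemma for pointwise left Kan extensions, and transport the conclusion back. First I would invoke \lemref{cartesian and opcartesian cells in terms of companions and conjoints} to see that the horizontal composite of the identity cell on $J$ with the cartesian cell defining $f^*$ is itself cartesian; as such it exhibits $J \hc f^*$ as a restriction of $J$ along $(\id, f)$. By the essential uniqueness of restrictions this produces an invertible horizontal cell $\alpha \colon J(\id, f) \iso J \hc f^*$ through which the cartesian cell displayed at the top of the composite in the statement factors as a vertical composite of $\id_J$ (side by side with the cartesian cell defining $f^*$) and $\alpha$.

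Next I would apply \lemref{conjoints define pointwise left Kan extensions} to the pair $\map fCB$ and $\map lBM$: it furnishes a cell, made from the cartesian cell defining $f^*$ together with identity cells on $l$, that defines $l \of f$ as the pointwise left Kan extension of $l$ along $f^*$. The pasting lemma for pointwise left Kan extensions then says that, since $\eta$ defines $l$ as the pointwise left Kan extension of $d$ along $J$, the vertical composite of $\eta$ with the cell just described defines $l \of f$ as the pointwise left Kan extension of $d$ along $J \hc f^*$.

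Finally I would transport along $\alpha^{-1}$: since $\alpha$ is an invertible horizontal cell, pre\ndash composition with it preserves the defining universal property, so the result is a cell defining $l \of f$ as the pointwise left Kan extension of $d$ along $J(\id, f)$. The main obstacle is bookkeeping: one must check that this transported cell coincides with the composite displayed in the statement, which follows from the observation above that the cartesian cell for $J(\id, f)$ factors through $\alpha$ and the horizontal composite of $\id_J$ with the cartesian cell defining $f^*$, together with the uniqueness of factorisations through cartesian cells.
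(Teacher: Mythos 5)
Your proposal is correct and is exactly the argument the paper intends: the paper only gives the hint ``use $J(\id,f) \iso J \hc f^*$, \lemref{conjoints define pointwise left Kan extensions} and the pasting lemma,'' and you have filled in precisely those steps, including the final bookkeeping identification of the transported cell with the displayed composite.
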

  
  A vertical morphism $\map fAC$ is called \emph{full and faithful} if its unit cell $1_f$ is cartesian. In the equipments $\Prof$ and $\enProf\V$ of ($\V$-enriched) profunctors this definition coincides with the usual notion. In an equipment notice that $f$ is full and faithful precisely if the opcartesian cell defining $f_*$ is cartesian, by applying the pasting lemma for cartesian cells to the factorisation of $1_f$ that was considered in \lemref{unit cell factorisation}. Invoking the previous lemma we obtain the following variation of a classical result.
  \begin{lemma} \label{pointwise left Kan extension along full and faithful map}
  	Suppose that the cell $\eta$ in the composite below defines $l$ as the pointwise left Kan extension of $d$ along the companion of $\map jAB$. If $j$ is full and faithful then the full composite is invertible.
  	\begin{displaymath} 	
  	 	\begin{tikzpicture}
				\matrix(m)[math35]{A & A \\ A & B \\ M & M \\};
				\path[map]	(m-1-2) edge node[right] {$j$} (m-2-2)
										(m-2-1) edge[barred] node[below] {$j_*$} (m-2-2)
														edge node[left] {$d$} (m-3-1)
										(m-2-2) edge node[right] {$l$} (m-3-2);
				\path				(m-1-1) edge[eq] (m-1-2)
														edge[eq] (m-2-1)
										(m-3-1) edge[eq] (m-3-2);
				\path[transform canvas={shift=(m-2-1), yshift=-2pt}]	(m-2-2) edge[cell] node[right] {$\eta$} (m-3-2);
				\draw				($(m-1-1)!0.5!(m-2-2)$)	node[small] {\textup{opcart}};
			\end{tikzpicture}
		\end{displaymath}
  \end{lemma}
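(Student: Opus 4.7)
The plan is to recognize the full composite, call it $\zeta$, as itself a pointwise left Kan extension of $d$, and then to invoke the essential uniqueness of such extensions. First I would note that, since $j$ is full and faithful, the unit cell $1_j$ is cartesian, so by the discussion preceding the statement the opcartesian cell defining $j_*$ (the top cell of the composite) is also cartesian. As a cartesian cell with horizontal source $1_A$, horizontal target $j_*$ and vertical legs $\id_A$ and $j$, it exhibits a restriction $j_*(\id, j) \iso 1_A$; equivalently, $j_*(\id, j) = 1_B(j, j)$ which is isomorphic to $1_A$ via the cartesian cell $1_j$.

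Assuming we work in an equipment, so that the conjoint $j^*$ exists, I would then apply \lemref{restriction of pointwise left Kan extensions} with $J = j_*$ and $f = j$. The cartesian cell featuring in the statement of that lemma and the opcart-cart top cell of $\zeta$ both define the restriction of $j_*$ along $\id_A$ and $j$, so they factor through each other as invertible horizontal cells; hence the conclusion of \lemref{restriction of pointwise left Kan extensions} is that $\zeta$ defines $l \of j$ as the pointwise left Kan extension of $d$ along $j_*(\id, j) \iso 1_A$.

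On the other hand, the unit cell $1_d$ trivially defines $d$ itself as the pointwise left Kan extension of $d$ along $1_A$: via the horizontal unit isomorphism $1_A \hc H \iso H$, any cell of the shape required by the pointwise property factors uniquely as $1_d \hc \psi$ for some $\psi$. By the essential uniqueness of pointwise left Kan extensions, the pair $(\zeta, l \of j)$ and $(1_d, d)$ then factor through each other as invertible vertical cells in $V(\K)$; and because horizontal composition with $1_d$ acts trivially up to unit coherence, this invertible cell is $\zeta$ itself, viewed as a $2$-cell $d \Rar l \of j$ in $V(\K)$, as required.

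The main obstacle I foresee is checking carefully that the cartesian cell appearing in the hypothesis of \lemref{restriction of pointwise left Kan extensions} really is, up to canonical isomorphism, the opcart-cart top cell of $\zeta$, so that the lemma applies directly to our composite. This is precisely the identification $j_*(\id, j) \iso 1_A$ via $1_j$ mentioned above, and it uses the cartesian characterization of full-and-faithful morphisms in an essential way.
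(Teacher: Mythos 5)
Your proof is correct and takes essentially the same route the paper intends: the paper offers no separate proof beyond observing that full-and-faithfulness makes the opcartesian cell defining $j_*$ cartesian and then ``invoking the previous lemma'' (\lemref{restriction of pointwise left Kan extensions}), which is exactly your argument. Your identification of the full composite as defining the pointwise left Kan extension of $d$ along $1_A \iso j_*(\id, j)$, followed by the uniqueness of such extensions against $1_d$, fills in the details correctly.
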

  
  The following is also well-known.
  \begin{lemma} \label{adjunction with invertible unit}
  	In a double category $\K$ consider a vertical cell $\cell\phi{\id_A}{g \of f}$, that forms the unit of an adjunction $\map{f \ladj g}CA$, and assume that the companion of $f$ exists. The cell $\phi$ is invertible if and only if $f$ is full and faithful.
  \end{lemma}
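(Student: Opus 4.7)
The plan is to decompose $\phi$ using \lemref{unit of adjunction as cartesian cell}, reduce the claim to the cartesianness of the opcartesian cell $\psi_f$ defining $f_*$ via two applications of the pasting lemma, and finally identify cartesianness of $\phi$ with its invertibility.

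By \lemref{unit of adjunction as cartesian cell}, $\phi$ is the vertical composite of the opcartesian cell $\psi_f$ defining $f_*$ (on top) beneath a cell $\phi'$; since $\phi$ is the unit of $f \ladj g$, the lemma asserts that $\phi'$ is cartesian. By \lemref{unit cell factorisation} we may likewise exhibit $1_f$ as the vertical composite of $\psi_f$ (on top) and the cartesian cell $\phi_f$ defining $f_*$ (below). Two applications of the pasting lemma then yield
\begin{displaymath}
	f \text{ full and faithful} \;\Leftrightarrow\; 1_f \text{ cartesian} \;\Leftrightarrow\; \psi_f \text{ cartesian} \;\Leftrightarrow\; \phi \text{ cartesian.}
\end{displaymath}

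It remains to show that the vertical cell $\phi$---whose horizontal boundaries are both $1_A$---is cartesian precisely when it is invertible as a $2$-cell in $V(\K)$. Given an inverse $\bar\phi$, each test cell $\chi$ (with bottom $1_A$ and right vertical $gf \of k$) factors through $\phi$ as the horizontal composite of $\chi$ with the whiskering $1_k \of \bar\phi$ of $\bar\phi$ by $k$; the factorisation identity is verified by the interchange law together with the equation $\phi \of \bar\phi = 1_{gf}$. Conversely, if $\phi$ is cartesian then applying its universal property to the identity cell $1_{gf}$ produces a cell $\bar\phi \colon gf \Rar \id_A$ satisfying $\phi \of \bar\phi = 1_{gf}$, whose two-sided inverse property is then confirmed by a second application of cartesian uniqueness.

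I expect this last equivalence between cartesianness and invertibility of $\phi$ to be the delicate step, even though it is a routine manipulation with the interchange law and cartesian universal properties; the rest of the argument is an immediate combination of the pasting lemma with \lemref{unit cell factorisation} and \lemref{unit of adjunction as cartesian cell}.
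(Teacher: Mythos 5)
Your argument is correct, and your ``only if'' half (invertible implies cartesian, then the pasting lemma to pass cartesianness from $\phi$ to the opcartesian cell defining $f_*$ and hence to $1_f$) is exactly the paper's. Where you genuinely diverge is the ``if'' half. The paper obtains ``$f$ full and faithful $\Rightarrow$ $\phi$ invertible'' from the Kan-extension machinery: $\phi'$ is cartesian, hence by \lemref{conjoints define pointwise left Kan extensions} it defines $g$ as the pointwise left Kan extension of $\id_A$ along $f_*$, and \lemref{pointwise left Kan extension along full and faithful map} then makes the composite $\phi = \phi' \of \textup{opcart}$ invertible. You instead run your chain of cartesianness equivalences backwards and finish with ``cartesian $\Rightarrow$ invertible'' for $\phi$, a statement the paper never needs. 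That step is sound but is where all the care sits: factorising $1_{g \of f}$ through $\phi$ yields $\cell{\bar\phi}{g \of f}{\id_A}$ with $\phi \of \bar\phi = 1_{g \of f}$, and one must translate this \emph{vertical} composite of squares into the composite $\phi \cdot \bar\phi = \id_{g \of f}$ in the hom-category $V(\K)(A,A)$ (they agree because a Godement product whose outer $1$-cell boundaries are identities reduces to hom-category composition); the missing identity $\bar\phi \cdot \phi = \id_{\id_A}$ then follows, as you say, from the uniqueness clause applied to $\chi = \phi$ with $h = k = \id_A$, since $\phi \cdot (\bar\phi \cdot \phi) = \phi = \phi \cdot \id_{\id_A}$. (Also note that in the paper's order convention your whiskering should read $\bar\phi \of 1_k$ rather than $1_k \of \bar\phi$.) Your route is the more self-contained one---it is essentially the Yoneda lemma in the hom-category $V(\K)(A,A)$ and avoids pointwise left Kan extensions altogether---at the price of this bookkeeping between the two compositions of $\K$, which your sketch gestures at but does not carry out; the paper's route outsources exactly that work to its lemmas on pointwise left Kan extensions.
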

  \begin{proof}
  	Consider the factorisation $\phi = \phi' \of \textup{opcart}$, as in \lemref{unit of adjunction as cartesian cell}, where $\phi'$ is cartesian and thus defines $g$ as the pointwise left Kan extension of $\id_A$ along $f_*$, by \lemref{conjoints define pointwise left Kan extensions}. The `if'-part follows immediately by applying the previous lemma to this factorisation.
  	
  	For the `only if'-part: if $\phi$ is invertible then it is cartesian. Since $\phi'$ is cartesian too, it follows from the pasting lemma that the opcartesian cell defining $f_*$ is cartesian as well, which is equivalent to $f$ being full and faithful.
  \end{proof}
  
  As hinted at by \lemref{restriction of pointwise left Kan extensions} the notion of pointwise left Kan extension in equipments is closely related to familiar other such notions:
  \begin{itemize}
  	\item In equipments $\K$ that have so-called `opcartesian tabulations', such as the equipment $\inProf\E$ of profunctors internal to $\E$, pointwise left Kan extensions along companions coincide with pointwise left Kan extensions in the $2$-category $V(\K)$---the latter in the sense of Street's paper \cite{Street74}. For details see \cite{Koudenburg14a}.
  	\item Similarly, for a cocomplete closed symmetric monoidal category $\V$, pointwise left Kan extensions along companions in the equipment $\enProf\V$, of $\V$-enriched profunctors, coincide with left Kan extensions along enriched functors in the classical sense, as defined in e.g.\ Kelly's book \cite{Kelly82}. In fact, we can regard the notion of pointwise left Kan extension in $\enProf\V$ as an extension of this classical notion, to the cases in which $\V$ is not closed symmetric monoidal but merely monoidal; see Section 1.5 of \cite{Koudenburg14b}.
  	\item In another direction, when considered in a `closed' equipment our notion of pointwise left Kan extension is closely related to Wood's notion of `indexed colimit', that was introduced in \cite{Wood82}.
  \end{itemize}

  \section{The Beck-Chevalley condition}
  Now that we have most of the necessary terminology of double categories in place, we can begin reformulating \thmref{finite-product-preserving left Kan extensions}. We start with its condition on the copresheaf $\map dA\Set$ and, in the example below, restate it as a `Beck-Chevalley condition'.
  
  First we notice that the $2$-monad $T$ for categories with finite products, on the $2$-category $\Cat = V(\Prof)$, extends to a `double monad' on $\Prof$, which we again denote $T$. Without going into every detail, this extension maps a profunctor $\hmap JAB$ to the profunctor $\hmap{TJ}{TA}{TB}$ that is given by
  \begin{displaymath}
  	(TJ)\bigpars{(x_1, \dotsc, x_n),(y_1, \dotsc, y_k)} = \coprod_{\map s{\set{1, \dotsc, k}}{\set{1, \dotsc, n}}} \prod_{i=1}^k J(x_{si},y_i);
  \end{displaymath}
  compare the definition of the hom-sets of $TA$ given before. The `double transformation' that is the extension of the unit transformation $\nat\iota{\id_\Cat}T$ consists, besides the usual natural functors $\map{\iota_A}A{TA}$, of natural cells $\cell{\iota_J}J{TJ}$ in $\Prof$ as on the left below. These are simply given by the natural isomorphisms $J(x,y) \iso (TJ)(\iota x, \iota y)$, for all $x \in A$ and $y \in B$.
  \begin{equation} \label{right Beck-Chevalley composite}
	  \begin{tikzpicture}[textbaseline]
	    \matrix(m)[math35]{A & B \\ TA & TB \\};
	    \path[map]  (m-1-1) edge[barred] node[above] {$J$} (m-1-2)
	                        edge node[left] {$\iota_A$} (m-2-1)
	                (m-1-2) edge node[right] {$\iota_B$} (m-2-2)
	                (m-2-1) edge[barred] node[below] {$TJ$} (m-2-2);
	    \path[transform canvas={shift={($(m-1-2)!(0,0)!(m-2-2)$)}}] (m-1-1) edge[cell] node[right] {$\iota_J$} (m-2-1);
	  \end{tikzpicture} \qquad\qquad \iota_{J*} = \begin{tikzpicture}[textbaseline]
	  	\matrix(m)[math35]{A & A & B & TB \\ A & TA & TB & TB \\};
	  	\path[map]	(m-1-2) edge[barred] node[above] {$J$} (m-1-3)
	  											edge node[right] {$\iota_A$} (m-2-2)
	  							(m-1-3) edge[barred] node[above] {$\iota_{B*}$} (m-1-4)
	  											edge node[right, inner sep=1pt] {$\iota_B$} (m-2-3)
	  							(m-2-1) edge[barred] node[below] {$\iota_{A*}$} (m-2-2)
	  							(m-2-2) edge[barred] node[below] {$TJ$} (m-2-3);
	  	\path				(m-1-1) edge[eq] (m-1-2)
	  											edge[eq] (m-2-1)
	  							(m-1-4) edge[eq] (m-2-4)
	  							(m-2-3) edge[eq] (m-2-4);
	  	\path[transform canvas={shift={($(m-1-3)!0.5!(m-2-3)$)}, xshift=3pt}]	(m-1-2) edge[cell] node[right] {$\iota_J$} (m-2-2);
	  	\draw				($(m-1-1)!0.5!(m-2-2)$) node[small] {\opcart}
	  							($(m-1-3)!0.5!(m-2-4)$) node[small, xshift=3pt] {\cart};
	  \end{tikzpicture}
	\end{equation}
	For each profunctor $\hmap JAB$ we write $\iota_{J*}$ for the composite on the right above; compare the assignment $\phi \mapsto \phi_*$ defined by \eqref{companion pseudofunctor}. We say that $\iota_J$ satisfies the \emph{right Beck-Chevalley condition} if $\iota_{J*}$ is invertible.
	
	\begin{example} \label{right Beck-Chevalley condition}
		Returning to the copresheaf $\map dA\Set$ of \thmref{finite-product-preserving left Kan extensions}, we regard $\Set$ as the presheaf category $\ps 1 = \brks{1, \Set} \iso \Set$, where $1$ is the terminal category, and write $\hmap{D = \ps 1(\yon_1, d)}1A$ for the restriction of the unit profunctor on $\ps 1$ along the yoneda embedding $\map{\yon_1}1{\ps 1}$ and $\map dA{\ps 1}$; clearly $D(*,x) \iso dx$ for all $x \in A$. Later we will generalise the assignment $d \mapsto D$ thus given to other equipments that admit `yoneda embeddings'. Returning to the situation of \thmref{finite-product-preserving left Kan extensions}, we claim that $\iota_D$ satisfies the right Beck-Chevalley condition precisely if the category of elements $\int d$ is cosifted.
		
		Indeed, after remembering that the source of the transformation $\cell{\iota_{D*}}{D \hc \iota_{A*}}{\iota_{1*} \hc TD}$ is defined using coends and that its target is isomorphic to the restriction $(TD)(\iota_1, \id)$ (by \lemref{cartesian and opcartesian cells in terms of companions and conjoints}) it is a simple exercise to show that
		\begin{itemize}
			\item the component of $\iota_{D*}$ at $(*, ()) \in 1 \times TA$ is a bijection precisely if $\el d$ is connected;
			\item for any $k \geq 1$ and $\ul y = (y_1, \dotsc, y_k) \in TA$, the component of $\iota_{D*}$ at $(*, \ul y)$ is a bijection precisely if the category $\Span{(y_1, q_1), \dotsc, (y_k, q_k)}$ of `$k$-spans' in $\el d$ is connected for every sequence $(q_1, \dotsc, q_k)$ of elements $q_i \in dy_i$.
		\end{itemize}
		It is not hard to see that the second condition restricted to the case of $k = 2$ implies all cases $k \geq 1$, and we conclude that $\iota_D$ satisfies the right Beck-Chevalley condition if and only if $\el d$ is cosifted.
	\end{example}
	
	Before we can state properties of the right Beck-Chevalley condition we have to make more concrete the notion of `double monad'. What follows are brief descriptions of the notions of double functor, double transformation and double monad; for further details see for instance Section 2 of \cite{Koudenburg14b}.
	
	Given double categories $\K$ and $\L$, a \emph{lax double functor} $\map F\K\L$ is given by assignments that map the objects, vertical morphisms, horizontal morphisms and cells of $\K$ to those of $\L$ in a way that is compatible with sources and targets, and that preserves vertical composition and identities. Horizontal composition and units need only be preserved up to coherent horizontal cells $\cell{F_\hc}{FJ \hc FH}{F(J \hc H)}$ and $\cell{F_1}{1_{FA}}{F1_A}$, called the \emph{compositor} and \emph{unitor} cells, which satisfy associativity and unit axioms. We will often use the unit axioms, which state that the composite
	\begin{displaymath}
		1_{FA} \hc FJ \xRar{F_1 \hc \id} F1_A \hc FJ \xRar{F_\hc} F(1_A \hc FJ) \xRar{F\mf l} FJ
	\end{displaymath}
	equals $\mf l$ for each $\hmap JAB$, and likewise for $\mf r$. The double functor $F$ is called \emph{normal} if its unitor cells are identities, while it is called \emph{pseudo} if both its compositor and unitor cells are invertible.
	
	Given lax double functors $F$ and $\map G\K\L$, a \emph{double transformation} $\nat\xi FG$ consists of a family of morphisms $\map{\xi_A}{FA}{GA}$, indexed by $A \in \K$, and a family of cells $\xi_J$ as below, indexed by $\hmap JAB \in \K$. Both families are required to be natural with respect to vertical composition, that to satisfy a composition and unit axiom. If $F$ and $G$ are normal lax double functors then the unit axiom simply states that $\xi_{1_A} = 1_{\xi_A}$.
	\begin{displaymath}
		\begin{tikzpicture}
	    \matrix(m)[math35]{FA & FB \\ GA & GB \\};
	    \path[map]  (m-1-1) edge[barred] node[above] {$FJ$} (m-1-2)
	                        edge node[left] {$\xi_A$} (m-2-1)
	                (m-1-2) edge node[right] {$\xi_B$} (m-2-2)
	                (m-2-1) edge[barred] node[below] {$GJ$} (m-2-2);
	    \path[transform canvas={shift={($(m-1-2)!(0,0)!(m-2-2)$)}}] (m-1-1) edge[cell] node[right] {$\xi_J$} (m-2-1);
	  \end{tikzpicture}
	\end{displaymath}
	
	Finally a \emph{lax double monad} $T = (T, \mu, \iota)$ on a double category $\K$ consists of a lax double functor $\map T\K\K$ and double transformations $\nat\mu{T^2}T$ and $\nat\iota{\id_\K}T$, the latter satisfying the usual axioms $\mu \of \mu T = \mu \of T\mu$ and $\mu \of \iota T = \id_T = \mu \of T\iota$. For convenience we will restrict ourselves to double monads whose underlying lax double functor $T$ is normal.
	
	Before looking at the right Beck-Chevalley condition, we first record some facts about double functors and double transformations.
	\begin{lemma} \label{double functors preserve companions and conjoints}
		Normal lax double functors preserve cartesian and opcartesian cells that define companions and conjoints.
	\end{lemma}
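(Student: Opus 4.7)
The plan is to exploit the characterisation from \lemref{unit cell factorisation}: a cell $\phi$ is cartesian and defines $f_*$ as the companion of $\map fAC$ precisely when it occurs as the lower half of a factorisation $1_f = \phi \of \psi$ in which $\psi$ is opcartesian, equivalently precisely when there exists an opcartesian cell $\psi$ such that the pair $(\phi, \psi)$ satisfies the two companion identities. It therefore suffices to check that, if $F$ is a normal lax double functor, then $(F\phi, F\psi)$ satisfies the analogous identities in $\L$ for the morphism $\map{Ff}{FA}{FC}$.

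First, by normality one has $F(1_A) = 1_{FA}$ and $F(1_C) = 1_{FC}$, so that $F\phi$ and $F\psi$ have exactly the correct boundary shape to exhibit $Ff_*$ as the companion of $Ff$. The vertical companion identity is then immediate, since lax double functors preserve vertical composition strictly:
\begin{displaymath}
	F\phi \of F\psi = F(\phi \of \psi) = F(1_f) = 1_{Ff},
\end{displaymath}
where the last equality again uses normality.

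The horizontal companion identity is a little subtler because $F$ preserves horizontal composition only up to the compositor cells $F_\hc$. The idea is to apply $F$ to the identity $\psi \hc \phi = \id_{f_*}$ (written with the unitors $\mf l$ and $\mf r$ that reconcile $1_A \hc f_*$ and $f_* \hc 1_C$ with $f_*$), and then use the naturality of $F_\hc$ together with the unit axioms for $F$. In the presence of normality, the axiom that says $F\mf l \of F_\hc \of (F_1 \hc \id) = \mf l$ collapses to $F\mf l \of F_\hc = \mf l$, and similarly for $\mf r$; in other words the compositor cells involving a unit on either side coincide with the unitors in $\L$. After this rewriting the identity $F\psi \hc F\phi = \id_{Ff_*}$ in $\L$ drops out, and by \lemref{unit cell factorisation} applied in $\L$ we conclude that $F\phi$ is cartesian and that together with $F\psi$ it defines $Ff_*$ as the companion of $Ff$.

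The statement for opcartesian cells defining conjoints follows by a horizontally dual argument, using the conjoint identities in place of the companion identities. The only real obstacle is the compositor-unitor bookkeeping in the horizontal identity; normality of $F$ is precisely what prevents additional $F_1$-data from entering the calculation and reduces the verification to an unpacking of the unit axioms.
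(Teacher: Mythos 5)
Your proposal is correct and follows essentially the same route as the paper, which proves the lemma by observing that the $F$-images of the pair $(\phi,\psi)$ from \lemref{unit cell factorisation} again satisfy the companion identities, using normality for the vertical identity and the unit axiom (collapsed by normality to $F\mf l \of F_\hc = \mf l$) for the horizontal one. You have simply written out in full the bookkeeping that the paper leaves as a one-line hint.
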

	For a proof notice that images of the cells $\phi$ and $\psi$, as in \lemref{unit cell factorisation}, again satisfy the companion identities (for the horizontal identity use the unit axiom for double functors). Extending the previous lemma, the following is half of Proposition~6.8 of \cite{Shulman08}, which is harder to prove.
	\begin{proposition}[Shulman] \label{lax double functors preserve cartesian cells}
		Lax double functors preserve all cartesian cells.
	\end{proposition}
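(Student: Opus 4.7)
The plan is to reduce cartesianness of $F\phi$ to the preservation of cartesian cells defining companions and conjoints, which is essentially handled by \lemref{double functors preserve companions and conjoints}. Assume we are working in the setting where the relevant restrictions exist via companions and conjoints, so that \lemref{cartesian and opcartesian cells in terms of companions and conjoints} applies. Let $\phi$ be cartesian, defining $J$ as the restriction $K(f, g)$ of $\hmap KCD$ along $\map fAC$ and $\map gBD$.

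First I would use \lemref{cartesian and opcartesian cells in terms of companions and conjoints} to obtain an invertible horizontal cell $\alpha\colon J \iso f_* \hc K \hc g^*$ presenting $\phi$ as the horizontal pasting of $\alpha$ with $\id_K$ and the cartesian cells defining $f_*$ and $g^*$. Since any lax double functor preserves horizontal invertibility of cells (being compatible with vertical composition), $F\alpha$ remains invertible. Next, by a mild extension of \lemref{double functors preserve companions and conjoints} to non-normal lax $F$ using the unit axioms for $F_\hc$ and $F_1$ together with the companion and conjoint identities of \lemref{unit cell factorisation}, the images of the cartesian cells defining $f_*$ and $g^*$ remain cartesian in $\L$, exhibiting $F(f_*)$ and $F(g^*)$ as a companion and a conjoint of $Ff$ and $Fg$ respectively.

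The main obstacle is that $F$ is only lax, so the compositor $F_\hc\colon F(f_*) \hc FK \hc F(g^*) \Rar F(f_* \hc K \hc g^*) \iso FJ$ is not invertible in general; one therefore cannot simply paste the preserved cartesian cells via \lemref{cartesian and opcartesian cells in terms of companions and conjoints} in $\L$ and invoke the pasting lemma. To bypass this, I would verify the universal property of $F\phi$ directly. Given any cell $\chi$ into $FK$ in $\L$ with vertical sides $Ff \of h$ and $Fg \of k$, I would precompose $\chi$ horizontally with the opcartesian cell defining $h_*$ and the cartesian cell defining $k^*$ in $\L$, then factor through the preserved cartesian cells for $F(f_*)$ and $F(g^*)$ to land in the middle copy of $FK$ over $(\id, \id)$, and finally repackage the result into a cell $\psi$ with target $FJ$ using the companion and conjoint identities (the companion $h_*$ plays the role analogous to a witness that $\chi$ originates over $(h, k)$).

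The subtle step will be verifying $F\phi \of \psi = \chi$, since this requires reconciling the appearance of $F_\hc$ in $F\phi$ with the horizontal pastings used to build $\psi$; the key will be the unit and associativity axioms for $F_\hc$, together with the naturality of $F_\hc$ with respect to the companion/conjoint cells. Uniqueness of $\psi$ follows dually: any two factorisations, when postcomposed with the cartesian cells for $F(f_*)$ and $F(g^*)$, must agree by the (ordinary) universal property of the preserved cartesian cells, and the companion/conjoint identities allow one to peel these off to conclude equality of the original factorisations.
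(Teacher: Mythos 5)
The paper gives no proof of this proposition itself---it cites Proposition~6.8 of Shulman and notes that it is ``harder to prove'' than \lemref{double functors preserve companions and conjoints}---so I assess your plan on its own terms. There is a genuine gap at the step where you invoke ``a mild extension of \lemref{double functors preserve companions and conjoints} to non-normal lax $F$'' to conclude that the images of the cartesian cells defining $f_*$ and $g^*$ exhibit $F(f_*)$ and $F(g^*)$ as a companion of $Ff$ and a conjoint of $Fg$. This is false for non-normal lax functors, and the failure of normality is exactly what makes the proposition harder than the lemma. The cartesian cell $\eps$ defining $f_*$ has horizontal target the unit $1_C$, so its image $F\eps$ has horizontal target $F1_C$; a cell exhibiting a companion of $Ff$ must have target $1_{FC}$, and for a merely lax $F$ there is no comparison $F1_C \Rightarrow 1_{FC}$ (that would be a colax unitor). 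What is true is that $F\eps$ is cartesian, which gives $F(f_*) \iso (Ff)_* \hc F1_C$---but that is itself an instance of the proposition being proved, and it shows precisely that $F(f_*)$ is in general \emph{not} $(Ff)_*$. A concrete counterexample: take the double category with one object, only the identity vertical morphism, and a monoidal category $\V$ as its horizontal part; a lax double functor is then a lax monoidal functor, the companion of the unique vertical morphism is the monoidal unit $I$, and $FI \not\iso I$ for non-normal $F$. Since your subsequent construction of $\psi$ (factoring $\chi$ ``through the preserved cartesian cells for $F(f_*)$ and $F(g^*)$ to land in the middle copy of $FK$'') presupposes this identification, the reduction does not go through as written.

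The repair, which is essentially Shulman's argument, is to work with the canonical but generally non-invertible comparison cells $\lambda\colon (Ff)_* \Rightarrow F(f_*)$ and $\rho\colon (Fg)^* \Rightarrow F(g^*)$, obtained by factoring $F(\textup{opcart}) \of F_1$ through the opcartesian cells defining $(Ff)_*$ and $(Fg)^*$ in $\L$. One then shows directly that the comparison cell $FJ \Rightarrow (Ff)_* \hc FK \hc (Fg)^*$ induced by $F\phi$ is invertible, with explicit inverse given by composing $\lambda \hc \id_{FK} \hc \rho$, the compositor $F_\hc$, and $F$ applied to the inverse of the isomorphism $J \iso f_* \hc K \hc g^*$; that the two composites are identities is checked using the companion and conjoint identities, the naturality of $F_1$ and $F_\hc$, and the unit axioms. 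So your instinct that the unit axioms and the non-invertibility of $F_\hc$ are the crux is correct, but they enter in proving invertibility of this comparison cell, not in upgrading $F(f_*)$ to a companion---the latter simply fails.
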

	
	By combining \lemref{double functors preserve companions and conjoints} with the discussion following \eqref{companion pseudofunctor} the first assertion of the next lemma follows. In the proof of the second assertion use the unit axiom for $F$.
	\begin{lemma} \label{compositor on companions}
		Consider morphisms $\map fAC$ and $\map hCE$ in a double category $\K$ whose companions $\hmap{f_*}AC$ and $\hmap{h_*}CE$ exist, and let $\map F\K\L$ be a normal lax double functor. Both the composite of the $F$-images of the cartesian cells defining $f_*$ and $h_*$, as well as the $F$-image of the composite of these cells, define the companion of $Fh \of Ff$ in $\L$. The unique factorisation of the former through the latter is the compositor $\cell{F_\hc}{Ff_* \hc Fh_*}{F(f_* \hc h_*)}$, which is therefore invertible.
	\end{lemma}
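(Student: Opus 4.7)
The plan is to apply \lemref{double functors preserve companions and conjoints} twice and then identify the resulting unique factorisation with the compositor using the axioms of a lax double functor. First I would observe that, writing $\phi_f$ and $\phi_h$ for the chosen cartesian cells defining $f_*$ and $h_*$, the horizontal composite $\phi_f \hc \phi_h$ (followed by the horizontal unitor $1_C \hc 1_E \iso 1_E$) is cartesian and defines $f_* \hc h_*$ as the companion of $h \of f$, exactly as in the discussion after \eqref{companion pseudofunctor}. Applying $F$ to this composite cell and invoking \lemref{double functors preserve companions and conjoints} produces a cartesian cell in $\L$ that defines $F(f_* \hc h_*)$ as the companion of $F(h \of f) = Fh \of Ff$.

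Second I would apply \lemref{double functors preserve companions and conjoints} to each of $\phi_f$ and $\phi_h$ separately, yielding cartesian cells $F\phi_f$ and $F\phi_h$ that define $Ff_*$ and $Fh_*$ as companions of $Ff$ and $Fh$ in $\L$; crucially, normality of $F$ ensures that their vertical targets are the horizontal units $1_{FC}$ and $1_{FE}$. Combining the pasting lemma with \lemref{cartesian and opcartesian cells in terms of companions and conjoints}, as in the discussion after \eqref{companion pseudofunctor}, the horizontal composite $F\phi_f \hc F\phi_h$ (followed by the unitor $1_{FC} \hc 1_{FE} \iso 1_{FE}$) is again cartesian and defines $Ff_* \hc Fh_*$ as the companion of $Fh \of Ff$. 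By the essential uniqueness of companions (i.e.\ the universal property of cartesian cells) there is therefore a unique invertible horizontal cell $\alpha\colon Ff_* \hc Fh_* \Rightarrow F(f_* \hc h_*)$ through which the former cartesian cell factors through the latter.

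It remains to identify $\alpha$ with the compositor $F_\hc$. For this I would apply the naturality axiom of the compositor to the pair of cells $\phi_f$ and $\phi_h$, which gives the equation
\begin{equation*}
  F(\phi_f \hc \phi_h) \of F_\hc = F_\hc^{(1)} \of (F\phi_f \hc F\phi_h),
\end{equation*}
where $F_\hc^{(1)}$ denotes the compositor on the horizontal units $1_C$ and $1_E$. Using normality ($F_1 = \id$) together with the unit axioms for $F$, I would argue that $F_\hc^{(1)}$ is precisely the unitor-comparison cell $1_{FC} \hc 1_{FE} \Rightarrow F(1_C \hc 1_E)$ used above to complete $F\phi_f \hc F\phi_h$ into a cartesian cell with target $1_{FE}$; this identifies the right-hand side of the displayed equation with the cartesian cell of Step~2. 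Consequently $F_\hc$ satisfies the defining relation of $\alpha$, and hence equals $\alpha$ and is invertible.

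The main obstacle is the bookkeeping in the last paragraph: one has to carefully unwind the unit axioms of a lax double functor to see that, in the normal case, the compositor on a pair of horizontal units matches the unitor isomorphism that was tacitly invoked when interpreting the horizontal composite of the two companion cartesian cells as a companion of $h \of f$. Once this identification is in hand the identification of $\alpha$ with $F_\hc$ is essentially immediate.
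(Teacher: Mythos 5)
Your overall strategy coincides with the paper's hint: \lemref{double functors preserve companions and conjoints} together with the discussion following \eqref{companion pseudofunctor} for the first assertion, and the naturality of the compositor plus the unit axiom and normality for the second. However, the composites you write down at the crucial step are not well formed, and this is precisely where the content of the lemma lies. Writing $\phi_f$ and $\phi_h$ for the cartesian cells defining $f_*$ and $h_*$, the horizontal composite $\phi_f \hc \phi_h$ does not exist: the right vertical boundary of $\phi_f$ is $\id_C$ while the left vertical boundary of $\phi_h$ is $h$, and these would have to coincide. Likewise $1_C \hc 1_E$ is not a composable pair of horizontal morphisms ($1_C$ ends at $C$ while $1_E$ starts at $E$), so there is no ``unitor $1_C \hc 1_E \iso 1_E$'', and your displayed equation $F(\phi_f \hc \phi_h) \of F_\hc = F_\hc^{(1)} \of (F\phi_f \hc F\phi_h)$ is not an instance of the naturality of $F_\hc$. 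The composite the paper intends is the pasting $\phi_h \of \mf l \of (\phi_f \hc \id_{h_*})$, whose horizontal target is $1_E$ rather than $1_C \hc 1_E$; it is cartesian by \lemref{cartesian and opcartesian cells in terms of companions and conjoints} and the pasting lemma, and it defines $f_* \hc h_*$ as the companion of $h \of f$. The same correction applies to your description of the composite of the $F$-images in $\L$.

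Once the shapes are fixed the repair is mechanical and recovers exactly your intended argument. Apply the naturality of $F_\hc$ to the horizontally composable pair $(\phi_f, \id_{h_*})$, which gives $F(\phi_f \hc \id_{h_*}) \of F_\hc^{f_*, h_*} = F_\hc^{1_C, h_*} \of (F\phi_f \hc \id_{Fh_*})$; then the unit axiom for $F$, combined with $F_1 = \id$ by normality, gives $F\mf l \of F_\hc^{1_C, h_*} = \mf l$. Composing vertically with $F\phi_h$ shows that the $F$-image of the pasting, precomposed with $F_\hc$, equals the corresponding pasting of the $F$-images; this is the defining property of the unique factorisation of one companion-defining cartesian cell through the other, so that factorisation is $F_\hc$ and is therefore invertible. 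In short, the idea is right and matches the paper's ``use the unit axiom for $F$'', but the central equation as written does not typecheck and the proof does not go through without the above correction.
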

	
	\begin{lemma} \label{naturality of double transformations}
		Let $\nat\xi FG$ be a double transformation between normal lax double functors $F$ and $\map G\K\L$. For the companion of a morphism $\map fAC$ in $\K$ the following identity holds; a similar one holds for its conjoint $\hmap{f^*}CA$.
		\begin{displaymath}
			\begin{tikzpicture}[textbaseline]
				\matrix(m)[math35]{FA & FC \\ GA & GC \\ GC & GC \\};
				\path[map]	(m-1-1) edge[barred] node[above] {$Ff_*$} (m-1-2)
														edge node[left] {$\xi_A$} (m-2-1)
										(m-1-2) edge node[right] {$\xi_C$} (m-2-2)
										(m-2-1) edge[barred] node[below] {$Gf_*$} (m-2-2)
														edge node[left] {$Gf$} (m-3-1);
				\path				(m-2-2) edge[eq] (m-3-2)
										(m-3-1) edge[eq] (m-3-2);
				\path[transform canvas={xshift=1.75em}]	(m-1-1) edge[cell] node[right] {$\xi_{f_*}$} (m-2-1);
				\draw[transform canvas={shift={(1.75em, -1.625em)}}]	(m-2-1) node[small] {$G$\textup{cart}};
			\end{tikzpicture} = \begin{tikzpicture}[textbaseline]
				\matrix(m)[math35]{FA & FA & FC \\ GA & FC & FC \\ GC & GC & GC \\};
				\path[map]	(m-1-1) edge node[left] {$\xi_A$} (m-2-1)
										(m-1-2) edge[barred] node[above] {$Ff_*$} (m-1-3)
														edge node[left] {$Ff$} (m-2-2)
										(m-2-1) edge node[left] {$Gf$} (m-3-1)
										(m-2-2) edge node[right] {$\xi_C$} (m-3-2)
										(m-2-3) edge node[right] {$\xi_C$} (m-3-3);
				\path				(m-1-1) edge[eq] (m-1-2)
										(m-1-3) edge[eq] (m-2-3)
										(m-2-2) edge[eq] (m-2-3)
										(m-3-1) edge[eq] (m-3-2)
										(m-3-2) edge[eq] (m-3-3);
				\path[transform canvas={shift={(1.75em, -1.625em)}}]	(m-1-2) node[small] {$F$\textup{cart}};
			\end{tikzpicture}
		\end{displaymath}
	\end{lemma}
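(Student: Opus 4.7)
The plan is to recognise the claimed identity as a direct unpacking of the naturality axiom for the double transformation $\xi$ applied to the cartesian cell in $\K$ that defines the companion $f_*$, together with the unit axiom for $\xi$.

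First I would introduce the cartesian cell $\alpha$ in $\K$ that defines the companion $\hmap{f_*}AC$; its horizontal source is $f_*$, its horizontal target $1_C$, its left leg $f$ and its right leg $\id_C$. By \lemref{double functors preserve companions and conjoints} both $F\alpha$ and $G\alpha$ remain cartesian in $\L$, and they exhibit $Ff_*$ and $Gf_*$ as the companions of $Ff$ and $Gf$ respectively; these are precisely the cells labelled $F$cart and $G$cart in the statement. Applying the naturality of $\xi$ at $\alpha$ produces the identity
\begin{displaymath}
  G\alpha \of \xi_{f_*} = \xi_{1_C} \of F\alpha,
\end{displaymath}
where $\of$ denotes vertical composition of cells in $\L$. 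The left-hand side of this equation is exactly the left-hand side of the lemma.

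Next I would apply the unit axiom for double transformations between normal lax double functors to rewrite $\xi_{1_C}$ as the horizontal identity cell $1_{\xi_C}$. Whiskering $F\alpha$ vertically with $1_{\xi_C}$ yields a cell with top horizontal $Ff_*$, bottom horizontal $1_{GC}$, right vertical $\xi_C$, and left vertical $\xi_C \of Ff$. To match the right-hand side of the stated identity, which displays the left leg as the composite $Gf \of \xi_A$, I would then invoke the naturality of $\xi$ at the vertical morphism $f$, i.e.\ $\xi_C \of Ff = Gf \of \xi_A$. This completes the derivation.

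The analogous statement for the conjoint $\hmap{f^*}CA$ is proved by applying exactly the same argument to the cartesian cell in $\K$ that defines $f^*$, whose horizontal source is $f^*$, horizontal target $1_C$, left leg $\id_C$ and right leg $f$. I do not anticipate any real obstacle: the proof amounts to invoking naturality on a cell in $\K$ and then simplifying using the normality assumption, so the only care required lies in correctly identifying the horizontal and vertical boundaries of the composites involved.
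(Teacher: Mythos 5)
Your proof is correct and takes exactly the route of the paper's own (one-line) argument: apply the naturality of $\xi$ to the cartesian cell defining $f_*$, then invoke the unit axiom $\xi_{1_C} = 1_{\xi_C}$ available for normal lax double functors. The extra care you take in matching the left vertical boundary via $\xi_C \of Ff = Gf \of \xi_A$ is a correct and worthwhile detail that the paper leaves implicit.
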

	\begin{proof}
		The proof follows from the naturality of $\nat\xi FG$ and its unit axiom.
	\end{proof}
	
	For the remainder of this section we fix a normal lax double monad $T = (T, \mu, \iota)$ on an equipment $\K$, and turn to the right Beck-Chevalley condition that was defined at the beginning of this section.
	\begin{proposition} \label{right Beck-Chevalley condition for companions}
		The cell $\iota_{f_*}$ satisfies the right Beck-Chevalley condition for each $\map fAC$. 
	\end{proposition}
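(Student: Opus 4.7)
The plan is to exhibit both the horizontal source $f_* \hc \iota_{C*}$ and horizontal target $\iota_{A*} \hc Tf_*$ of $\iota_{f_*{}_*}$ as companions of a single common vertical morphism in $\K$, and then to identify $\iota_{f_*{}_*}$ with the canonical comparison cell between two such companions, which is automatically invertible.

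First I would observe, via the discussion following \eqref{companion pseudofunctor}, that the composite of the cartesian cells defining $f_*$ and $\iota_{C*}$ is itself cartesian and exhibits $f_* \hc \iota_{C*}$ as the companion of $\iota_C \of f$. For the target side, \lemref{double functors preserve companions and conjoints} ensures that the $T$-image of the cartesian cell defining $f_*$ is the cartesian cell defining $Tf_*$ as the companion of $Tf$, so composing with the cartesian cell defining $\iota_{A*}$ exhibits $\iota_{A*} \hc Tf_*$ as the companion of $Tf \of \iota_A$. Since $\iota$ is a double transformation between normal lax double functors, and such transformations are strictly natural on vertical morphisms, $\iota_C \of f = Tf \of \iota_A$, so both composite horizontal morphisms are companions of this single morphism. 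Consequently there is a unique invertible horizontal cell $\alpha \colon f_* \hc \iota_{C*} \Rar \iota_{A*} \hc Tf_*$ comparing the two companion structures.

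To finish, I would verify $\iota_{f_*{}_*} = \alpha$ by checking that vertically stacking $\iota_{f_*{}_*}$ on top of the cartesian cell for $\iota_{A*} \hc Tf_*$ reproduces the cartesian cell for $f_* \hc \iota_{C*}$. Expanding $\iota_{f_*{}_*}$ according to \eqref{right Beck-Chevalley composite} as $\textup{opcart}(\iota_{A*}) \hc \iota_{f_*} \hc \textup{cart}(\iota_{C*})$ and applying the exchange law splits the total composite into three horizontal factors: the left factor collapses by \lemref{unit cell factorisation} to $1_{\iota_A}$; the central factor, the stack of $\iota_{f_*}$ on the $T$-image of $\textup{cart}(f_*)$, reduces by \lemref{naturality of double transformations} applied to $\iota$ at $f$ to $\textup{cart}(f_*)$ (using strict naturality $\iota_C \of f = Tf \of \iota_A$ on the left vertical); and the right factor $\textup{cart}(\iota_{C*})$ is preserved. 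The hard part will be the combinatorial bookkeeping of this calculation---in particular, organising the decomposition of the cartesian cell for $\iota_{A*} \hc Tf_*$ so that its factors align column-by-column with those of $\iota_{f_*{}_*}$, allowing \lemref{naturality of double transformations} and \lemref{unit cell factorisation} to be applied cleanly after the exchange.
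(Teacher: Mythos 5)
Your proposal is correct and follows essentially the same route as the paper: the paper likewise shows that $(\iota_{f_*})_*$ pasted onto the cartesian composite exhibiting $\iota_{A*} \hc Tf_*$ as the companion of $Tf \of \iota_A$ equals the cartesian composite exhibiting $f_* \hc \iota_{C*}$ as the companion of $\iota_C \of f$, using exactly the definition of $(\iota_{f_*})_*$, the companion identity for $\iota_{A*}$, and \lemref{naturality of double transformations}, and then concludes invertibility (via \lemref{invertible companion cell}) because $(\iota_{f_*})_*$ is the comparison between two cartesian cells defining the same companion.
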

	\begin{proof}
		We have show to that the horizontal cell $(\iota_{f_*})_*$ in the left-hand side below is invertible. To this end consider the full equation below, whose first identity follows from the definition of $(\iota_{f_*})_*$ and the companion identity for $\iota_{A*}$, while the second follows from the previous lemma.
		\begin{displaymath}
			\begin{tikzpicture}[textbaseline]
				\matrix[math35](m){A & C & TC \\ A & TA & TC \\ TA & TA & TC \\ TC & TC & TC \\};
				\path[map]	(m-1-1) edge[barred] node[above] {$f_*$} (m-1-2)
										(m-1-2) edge[barred] node[above] {$\iota_{C*}$} (m-1-3)
										(m-2-1) edge[barred] node[below] {$\iota_{A*}$} (m-2-2)
														edge node[left] {$\iota_A$} (m-3-1)
										(m-2-2) edge[barred] node[below] {$Tf_*$} (m-2-3)
										(m-3-1) edge node[left] {$Tf$} (m-4-1)
										(m-3-2) edge[barred] node[above] {$Tf_*$} (m-3-3)
														edge node[left] {$Tf$} (m-4-2);
				\path				(m-1-1) edge[eq] (m-2-1)
										(m-1-3) edge[eq] (m-2-3)
										(m-2-2) edge[eq] (m-3-2)
										(m-2-3) edge[eq] (m-3-3)
										(m-3-1) edge[eq] (m-3-2)
										(m-3-3) edge[eq] (m-4-3)
										(m-4-1) edge[eq] (m-4-2)
										(m-4-2) edge[eq] (m-4-3)
										(m-1-2) edge[cell] node[right] {$(\iota_{f_*})_*$} (m-2-2);
				\draw[transform canvas={shift={(1.75em, -1.625em)}}]	(m-2-1) node[small] {\cart}
										(m-3-2) node[small] {$T$\cart};
			\end{tikzpicture} \! = \! \begin{tikzpicture}[textbaseline]
				\matrix[math35](m){A & C & TC \\ TA & TC & TC \\ TC & TC & TC \\};
				\path[map]	(m-1-1) edge[barred] node[above] {$f_*$} (m-1-2)
														edge node[left] {$\iota_A$} (m-2-1)
										(m-1-2) edge[barred] node[above] {$\iota_{C*}$} (m-1-3)
														edge node[right, inner sep=1pt] {$\iota_C$} (m-2-2)
										(m-2-1) edge[barred] node[below] {$Tf_*$} (m-2-2)
														edge node[left] {$Tf$} (m-3-1);
				\path				(m-1-3) edge[eq] (m-2-3)
										(m-2-2) edge[eq] (m-2-3)
														edge[eq] (m-3-2)
										(m-2-3) edge[eq] (m-3-3)
										(m-3-1) edge[eq] (m-3-2)
										(m-3-2) edge[eq] (m-3-3);
				\path[transform canvas={xshift=1.75em}]	(m-1-1) edge[cell] node[right] {$\iota_{f_*}$} (m-2-1);
				\draw[transform canvas={shift={(1.75em, -1.625em)}}] (m-1-2) node[small, xshift=3pt] {\cart}
										(m-2-1) node[small] {$T$\cart};
			\end{tikzpicture} \! = \! \begin{tikzpicture}[textbaseline]
				\matrix[math35](m){A & A & C & TC \\ TA & C & C & TC \\ TC & TC & TC & TC \\};
				\path[map]	(m-1-1) edge node[left] {$\iota_A$} (m-2-1)
										(m-1-2) edge[barred] node[above] {$f_*$} (m-1-3)
														edge node[left] {$f$} (m-2-2)
										(m-1-3) edge[barred] node[above] {$\iota_{C*}$} (m-1-4)
										(m-2-1) edge node[left] {$Tf$} (m-3-1)
										(m-2-2) edge node[left] {$\iota_C$} (m-3-2)
										(m-2-3) edge[barred] node[above] {$\iota_{C*}$} (m-2-4)
														edge node[left] {$\iota_C$} (m-3-3);
				\path				(m-1-1) edge[eq] (m-1-2)
										(m-1-3) edge[eq] (m-2-3)
										(m-1-4) edge[eq] (m-2-4)
										(m-2-2) edge[eq] (m-2-3)
										(m-2-4) edge[eq] (m-3-4)
										(m-3-1) edge[eq] (m-3-2)
										(m-3-2) edge[eq] (m-3-3)
										(m-3-3) edge[eq] (m-3-4);
				\draw[transform canvas={shift={(1.75em, -1.625em)}}]	(m-1-2) node[small] {\cart}
										(m-2-3) node[small] {\cart};
			\end{tikzpicture}
		\end{displaymath}
		Now notice that the bottom two rows of the left-hand side, as well as the last two columns of the right-hand side, are cartesian; this follows from \propref{lax double functors preserve cartesian cells} and the discussion following \eqref{companion pseudofunctor}. Applying \lemref{invertible companion cell} we conclude that $(\iota_{f_*})_*$ is invertible.
	\end{proof}
	
	\begin{corollary} \label{right Beck-Chevalley condition for conjoints of right adjoints}
		If $\map gCA$ has a left adjoint then $\iota_{g^*}$ satisfies the right Beck\ndash Che\-val\-ley condition.
	\end{corollary}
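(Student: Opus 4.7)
The plan is to reduce to \propref{right Beck-Chevalley condition for companions} by identifying the conjoint $g^*$ with the companion $f_*$ of the left adjoint $f$ of $g$, and then transporting the right Beck-Chevalley condition across this identification using the naturality of $\iota$.

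First, I would produce an invertible horizontal cell $\alpha\colon f_* \xRar\iso g^*$. Given the adjunction $f \ladj g$, \lemref{unit of adjunction as cartesian cell} factorises the unit $\cell\eta{\id_A}{g \of f}$ through the opcartesian cell defining $f_*$ as a cartesian cell $\cell{\eta'}{f_*}{1_A}$ with vertical sources $\id_A$ and $g$. This $\eta'$ exhibits $f_*$ as the restriction $1_A(\id_A, g) = g^*$, and hence factors as an invertible horizontal cell $\alpha\colon f_* \xRar\iso g^*$ through the cartesian cell defining $g^*$.

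Next, the naturality of the double transformation $\iota$ applied to $\alpha$ yields the vertical cell identity $T\alpha \of \iota_{f_*} = \iota_{g^*} \of \alpha$. Horizontally pasting the opcartesian cell defining $\iota_{A*}$ on the left of both sides, and the cartesian cell defining $\iota_{C*}$ on the right, and invoking the interchange law between horizontal and vertical composition of cells, I expect to obtain
\begin{displaymath}
	(\id_{\iota_{A*}} \hc T\alpha) \of (\iota_{f_*})_* = (\iota_{g^*})_* \of (\alpha \hc \id_{\iota_{C*}}).
\end{displaymath}
Since $\alpha$ is invertible and the normal lax double functor $T$ preserves invertibility of horizontal cells (these being vertical composites of cells with identity vertical sources), both $\alpha \hc \id_{\iota_{C*}}$ and $\id_{\iota_{A*}} \hc T\alpha$ are invertible. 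By \propref{right Beck-Chevalley condition for companions} the horizontal cell $(\iota_{f_*})_*$ is invertible, and so the displayed identity forces $(\iota_{g^*})_*$ to be invertible as well---which is exactly the right Beck-Chevalley condition for $\iota_{g^*}$.

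The most delicate step will be the interchange argument in the middle paragraph: one must keep track of the horizontal and vertical sources of all the cells involved and recognise that the horizontal composite of the (op-)cartesian cells with $\iota_{f_*}$ (respectively $\iota_{g^*}$) is exactly the composite $(\iota_{f_*})_*$ (respectively $(\iota_{g^*})_*$) defined by equation~\eqref{right Beck-Chevalley composite}. Beyond this bookkeeping, the argument is a direct combination of \lemref{unit of adjunction as cartesian cell} with the previous proposition.
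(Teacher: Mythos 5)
Your proposal is correct and follows essentially the same route as the paper: the paper likewise obtains the invertible horizontal cell $f_* \xRar{\iso} g^*$ from the adjunction unit via \lemref{unit of adjunction as cartesian cell}, uses the naturality of $\iota$ to get exactly the commuting square you display, and concludes by two-out-of-three invertibility from \propref{right Beck-Chevalley condition for companions}. The only difference is presentational---you spell out the whiskering/interchange bookkeeping that the paper leaves implicit.
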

	\begin{proof}
		Consider a vertical cell $\cell\phi{\id_A}{g \of f}$ that forms the unit of an adjunction $f \ladj g$. It factors as a cartesian cell $\phi'$ that defines $f_*$ as the conjoint of $g$ by \lemref{unit of adjunction as cartesian cell} so that, by further factorising through the cartesian cell that defines $g^*$ as the conjoint of $g$, we obtain an invertible horizontal cell $\cell{\phi''}{f_*}{g^*}$. The naturality of $\iota$ implies that the diagram
		\begin{displaymath}
			\begin{tikzpicture}
				\matrix[math4, column sep=3em](m){f_* \hc \iota_{C*} & g^* \hc \iota_{C*} \\ \iota_{A*} \hc Tf_* & \iota_{A*} \hc Tg^* \\};
				\path				(m-1-1) edge[cell] node[above] {$\phi'' \hc \id$} (m-1-2)
														edge[cell] node[left] {$(\iota_{f_*})_*$} (m-2-1)
										(m-1-2) edge[cell] node[right] {$(\iota_{g^*})_*$} (m-2-2)
										(m-2-1) edge[cell] node[below] {$\id \hc T\phi''$} (m-2-2);
			\end{tikzpicture}
		\end{displaymath}
		commutes. As $(\iota_{f_*})_*$ is invertible by the previous proposition we conclude that, besides $(\iota_{g^*})_*$, all these cells are invertible. It follows that $(\iota_{g^*})_*$ must be invertible as well.
	\end{proof}
	
	The following result shows that, in the case that the double monad $T$ restricts to a colax-idempotent $2$-monad on the vertical $2$-category $V(\K)$ that is contained in $\K$, the right Beck-Chevalley condition for conjoints of maps $A \to C$ can be regarded as an extension of the notion of pseudo $T$\ndash morphisms $A \to C$. Here the latter only makes sense if both $A$ and $C$ admit algebra structures $(a, \bar a, \hat a)$ and $(c, \bar c, \hat c)$; recall that in that case any $\map fAC$ can be uniquely equipped with a structure cell $\cell{\bar f}{f \of a}{c \of Tf}$ that makes it into a colax $T$-morphism (see \eqref{colax algebra structure}).
	\begin{proposition} \label{right Beck-Chevalley condition for colax T-morphisms}
		Assume that the double monad $T$ restricts to a colax-idempotent $2$-monad on $V(\K)$ and let $\map fAC$ be a morphism between pseudo $T$-algebras. The unique structure cell $\bar f$, that makes $f$ into a colax $T$-morphism, is invertible precisely if the cell $\iota_{f^*}$ satisfies the right Beck-Chevalley condition.
	\end{proposition}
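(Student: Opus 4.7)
The strategy is to reduce invertibility of $\bar f$ to invertibility of a horizontal cell via a conjoint variant of \lemref{invertible companion cell}, and then to identify that horizontal cell with $(\iota_{f^*})_*$ up to canonical isomorphisms.

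First, \lemref{invertible companion cell} has an evident conjoint analogue---proved in the same way, via the local full-and-faithfulness of the pseudofunctor $(\dash)^*$---stating that a vertical cell between parallel morphisms whose conjoints exist is invertible precisely if its image under $(\dash)^*$ is invertible. Applied to $\bar f \colon f \of a \Rightarrow c \of Tf$ this reduces the question to whether the horizontal cell $(\bar f)^*$ is invertible.

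Next, using the compositor isomorphisms of $(\dash)^*$ one identifies $(f \of a)^* \iso f^* \hc a^*$ and $(c \of Tf)^* \iso c^* \hc (Tf)^*$. Since $A$ and $C$ are pseudo $T$\ndash algebras, the units $\hat a$ and $\hat c$ of the adjunctions $\iota_A \ladj a$ and $\iota_C \ladj c$ are invertible, and by \lemref{unit of adjunction as cartesian cell} they induce canonical isomorphisms $a^* \iso \iota_{A*}$ and $c^* \iso \iota_{C*}$. Combined with $(Tf)^* \iso T(f^*)$---which results from applying \propref{lax double functors preserve cartesian cells} to the cartesian cell defining $f^*$---these rewrite $(\bar f)^*$ as a horizontal cell $f^* \hc \iota_{A*} \Rightarrow \iota_{C*} \hc T(f^*)$, which has precisely the type of $(\iota_{f^*})_*$.

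It remains to verify that under these identifications $(\bar f)^*$ in fact equals $(\iota_{f^*})_*$; this is the main obstacle. One unfolds the definition of $\bar f$ from equation \eqref{colax algebra structure} as a whiskered composite of $\hat c$ and $\theta_a$, applies $(\dash)^*$ cellwise, and compares with the definition of $(\iota_{f^*})_*$ from the right-hand composite of \eqref{right Beck-Chevalley composite}, namely an opcartesian cell defining $\iota_{C*}$ followed by $\iota_{f^*}$ followed by a cartesian cell defining $\iota_{A*}$. The key observations are that under the adjunctions $\iota_A \ladj a$ and $\iota_C \ladj c$, the cells $\theta_a$ and $\hat c$ realise, via the companion identities of \lemref{unit cell factorisation}, precisely the isomorphisms $a^* \iso \iota_{A*}$ and $c^* \iso \iota_{C*}$ that exchange the appropriate opcartesian and cartesian cells; the remaining identification is then forced by the naturality of $\iota$ at $f$, expressed by \lemref{naturality of double transformations}. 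With this pasting verification in hand the two composites coincide up to the compositor isomorphisms, so that $\bar f$ is invertible precisely when $(\iota_{f^*})_*$ is.
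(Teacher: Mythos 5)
Your argument is correct and follows essentially the same route as the paper's proof: the paper likewise reduces the question to the invertibility of a horizontal cell via (the horizontal dual of) \lemref{invertible companion cell}, and establishes the identification of that cell with $(\iota_{f^*})_*$ by an explicit pasting equation built from the cartesian factorisations $\hat a'$, $\hat c'$, a triangle identity for $(\hat a,\theta_a)$, and \lemref{naturality of double transformations}. The pasting verification you flag as the ``main obstacle'' is exactly the computation the paper carries out, and the ingredients you name for it are the right ones.
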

	\begin{proof}
		Let the algebra structures on $A$ and $C$ be given by the triples $(a, \bar a, \hat a)$ and $(c, \bar c, \hat c)$. Recall that the structure cell $\cell{\bar f}{f \of a}{c \of Tf}$ is given in terms of the unit $\cell{\hat c}{c \of \iota_C}{\id_C}$  of $C$ and the counit $\cell{\theta_a}{\id_{TA}}{\iota_A \of a}$ of the adjunction $\iota_A \ladj a$. Consider the equality below, where $\hat c'$ and $\hat a'$ are the factorisations of $\hat c$ and $\hat a$ through the opcartesian cells defining the companions of $\iota_C$ and $\iota_A$ respectively; its identities are explained below.
		\begin{multline*}
			\begin{tikzpicture}[textbaseline, ampersand replacement=\&]
				\matrix[math35](m){C \& A \& TA \\ C \& TC \& TA \\ C \& TC \& TC \\ C \& C \& C \\};
				\path[map]	(m-1-1) edge[barred] node[above] {$f^*$} (m-1-2)
										(m-1-2) edge[barred] node[above] {$\iota_{A*}$} (m-1-3)
										(m-2-1) edge[barred] node[below] {$\iota_{C*}$} (m-2-2)
										(m-2-2) edge[barred] node[below] {$Tf^*$} (m-2-3)
										(m-2-3) edge node[right] {$Tf$} (m-3-3)
										(m-3-1) edge[barred] node[above] {$\iota_{C*}$} (m-3-2)
										(m-3-2) edge node[right] {$\iota_C$} (m-4-2)
										(m-3-3) edge node[right] {$\iota_C$} (m-4-3);
				\path				(m-1-1) edge[eq] (m-2-1)
										(m-1-3) edge[eq] (m-2-3)
										(m-2-1) edge[eq] (m-3-1)
										(m-2-2) edge[eq] (m-3-2)
										(m-3-1) edge[eq] (m-4-1)
										(m-3-2) edge[eq] (m-3-3)
										(m-4-1) edge[eq] (m-4-2)
										(m-4-2) edge[eq] (m-4-3)
										(m-1-2) edge[cell] node[right] {$(\iota_{f^*})_*$} (m-2-2);
				\path[transform canvas={xshift=1.75em}]	(m-3-1) edge[cell] node[right] {$\hat c'$} (m-4-1);
				\draw[transform canvas={shift={(1.75em, -1.625em)}}]	(m-2-2) node[small] {$T$cart};
			\end{tikzpicture} = \begin{tikzpicture}[textbaseline, ampersand replacement=\&]
				\matrix[math35](m){C \& A \& A \& TA \\ C \& C \& TA \& TA \\ \& TC \& TC \& TC \\ C \& C \& C \& C \\};
				\path[map]	(m-1-1) edge[barred] node[above] {$f^*$} (m-1-2)
										(m-1-2) edge node[right] {$f$} (m-2-2)
										(m-1-3) edge[barred] node[above] {$\iota_{A*}$} (m-1-4)
														edge node[right, inner sep=1pt] {$\iota_A$} (m-2-3)
										(m-2-2) edge node[right] {$\iota_C$} (m-3-2)
										(m-2-3) edge node[right] {$Tf$} (m-3-3)
										(m-2-4) edge node[right] {$Tf$} (m-3-4)
										(m-3-2) edge node[right] {$c$} (m-4-2)
										(m-3-3) edge node[right] {$c$} (m-4-3)
										(m-3-4) edge node[right] {$c$} (m-4-4);
				\path				(m-1-1) edge[eq] (m-2-1)
										(m-1-2) edge[eq] (m-1-3)
										(m-1-4) edge[eq] (m-2-4)
										(m-2-1) edge[eq] (m-2-2)
														edge[eq] (m-4-1)
										(m-2-3) edge[eq] (m-2-4)
										(m-3-2) edge[eq] (m-3-3)
										(m-3-3) edge[eq] (m-3-4)
										(m-4-1) edge[eq] (m-4-2)
										(m-4-2) edge[eq] (m-4-3)
										(m-4-3) edge[eq] (m-4-4);
				\path[transform canvas={shift={(1.75em, -1.625em)}}]	(m-2-1) edge[cell] node[right] {$\hat c$} (m-3-1);
				\draw[transform canvas={shift={(1.75em, -1.625em)}}]	(m-1-1) node[small] {cart}
										(m-1-3) node[small] {cart};
			\end{tikzpicture} \\
			= \begin{tikzpicture}[textbaseline, ampersand replacement=\&]
				\matrix[math35](m){C \& A \& TA \& TA \\ C \& A \& A \& \\ C \& C \& TA \& TA \\ \& TC \& TC \& TC \\ C \& C \& C \& C \\};
				\path[map]	(m-1-1) edge[barred] node[above] {$f^*$} (m-1-2)
										(m-1-2) edge[barred] node[above] {$\iota_{A*}$} (m-1-3)
										(m-1-3) edge node[right] {$a$} (m-2-3)
										(m-2-1) edge[barred] node[above] {$f^*$} (m-2-2)
										(m-2-2) edge node[right] {$f$} (m-3-2)
										(m-2-3) edge node[right] {$\iota_A$} (m-3-3)
										(m-3-2) edge node[right] {$\iota_C$} (m-4-2)
										(m-3-3) edge node[right] {$Tf$} (m-4-3)
										(m-3-4) edge node[right] {$Tf$} (m-4-4)
										(m-4-2) edge node[right] {$c$} (m-5-2)
										(m-4-3) edge node[right] {$c$} (m-5-3)
										(m-4-4) edge node[right] {$c$} (m-5-4);
				\path				(m-1-1) edge[eq] (m-2-1)
										(m-1-2) edge[eq] (m-2-2)
										(m-1-3) edge[eq] (m-1-4)
										(m-1-4) edge[eq] (m-3-4)
										(m-2-1) edge[eq] (m-3-1)
										(m-2-2) edge[eq] (m-2-3)
										(m-3-1) edge[eq] (m-3-2)
														edge[eq] (m-5-1)
										(m-3-3) edge[eq] (m-3-4)
										(m-4-2) edge[eq] (m-4-3)
										(m-4-3) edge[eq] (m-4-4)
										(m-4-2) edge[eq] (m-4-3)
										(m-4-3) edge[eq] (m-4-4)
										(m-5-1) edge[eq] (m-5-2)
										(m-5-2) edge[eq] (m-5-3)
										(m-5-3) edge[eq] (m-5-4);
				\path[transform canvas={xshift=1.75em}]	(m-1-2) edge[cell] node[right] {$\hat a'$} (m-2-2);
				\path[transform canvas={shift={(1.75em, -1.625em)}}]	(m-1-3) edge[cell] node[right] {$\theta_a$} (m-2-3)
										(m-3-1) edge[cell] node[right] {$\hat c$} (m-4-1);
				\draw[transform canvas={shift={(1.75em, -1.625em)}}]	(m-2-1) node[small] {cart};
			\end{tikzpicture} = \begin{tikzpicture}[textbaseline, ampersand replacement=\&]
				\matrix[math35](m){C \& A \& TA \& TA \\ C \& A \& A \& TC \\ C \& C \& C \& C \\};
				\path[map]	(m-1-1) edge[barred] node[above] {$f^*$} (m-1-2)
										(m-1-2) edge[barred] node[above] {$\iota_{A*}$} (m-1-3)
										(m-1-3) edge node[right] {$a$} (m-2-3)
										(m-1-4) edge node[right] {$Tf$} (m-2-4)
										(m-2-1) edge[barred] node[above] {$f^*$} (m-2-2)
										(m-2-2) edge node[right] {$f$} (m-3-2)
										(m-2-3) edge node[right] {$f$} (m-3-3)
										(m-2-4) edge node[right] {$c$} (m-3-4);
				\path				(m-1-1) edge[eq] (m-2-1)
										(m-1-2) edge[eq] (m-2-2)
										(m-1-3) edge[eq] (m-1-4)
										(m-2-1) edge[eq] (m-3-1)
										(m-2-2) edge[eq] (m-2-3)
										(m-3-1) edge[eq] (m-3-2)
										(m-3-2) edge[eq] (m-3-3)
										(m-3-3) edge[eq] (m-3-4);
				\path[transform canvas={xshift=1.75em}]	(m-1-2) edge[cell] node[right] {$\hat a'$} (m-2-2);
				\path[transform canvas={shift={(1.75em, -1.625em)}}]	(m-1-3) edge[cell] node[right] {$\bar f$} (m-2-3);
				\draw[transform canvas={shift={(1.75em, -1.625em)}}]	(m-2-1) node[small] {cart};
			\end{tikzpicture}
		\end{multline*}
		
		The first identity here follows from the definitions of $(\iota_{f^*})_*$ and $\hat c'$, together with \lemref{naturality of double transformations}. The second follows from the fact that the composite of $\hat a'$ and $\theta$ coincides with the cartesian cell defining $\iota_{A*}$ (a consequence of one of the triangle identities for $\hat a$ and $\theta_a$, together with the uniqueness of factorisations through the opcartesian cell defining $\iota_{A*}$). The third identity follows from the definition of $\bar f$. Finally remember that $\hat c'$ and $\hat a'$ are cartesian by \lemref{unit of adjunction as cartesian cell}, so that the bottom two rows in the left-hand side above, as well as the first two columns in the right-hand side, are cartesian. The proof now follows from (the horizontal dual of) \lemref{invertible companion cell}.
	\end{proof}
	Finally we consider the right Beck-Chevalley condition in relation to restrictions of the form $K(\id, g)$.
	\begin{lemma} \label{right Beck-Chevalley condition for restrictions}
		For morphisms $\hmap KCD$ and $\map gBD$, the cell $\iota_{K(\id, g)}$ satisfies the right Beck\ndash Chevalley condition as soon as the cells $\iota_K$ and $\iota_{g^*}$ do.
	\end{lemma}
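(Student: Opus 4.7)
The plan is to decompose the restriction as $K(\id, g) \iso K \hc g^*$ and reduce the right Beck-Chevalley condition for $K(\id, g)$ to the given conditions for $K$ and $g^*$. By \lemref{cartesian and opcartesian cells in terms of companions and conjoints}, the cartesian cell defining $K(\id, g)$ may be taken to be the horizontal composite of $\id_K$ with the cartesian cell defining $g^*$, yielding an isomorphism $K(\id, g) \iso K \hc g^*$. Applying \propref{lax double functors preserve cartesian cells} to this cartesian cell, and using \lemref{double functors preserve companions and conjoints} to rewrite $T(g^*) \iso (Tg)^*$, I obtain a parallel isomorphism $T(K(\id, g)) \iso TK \hc T(g^*)$ that exhibits $T(K(\id, g))$ as the restriction $(TK)(\id, Tg)$.

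Next I would use these two isomorphisms on the source and target of $(\iota_{K(\id,g)})_*$ to recast it as a cell of type $K \hc g^* \hc \iota_{B*} \Rar \iota_{C*} \hc TK \hc T(g^*)$, and then identify it with the horizontal pasting of $\id_K \hc (\iota_{g^*})_*$ (landing in $K \hc \iota_{D*} \hc T(g^*)$) followed by $(\iota_K)_* \hc \id_{T(g^*)}$ (arriving at $\iota_{C*} \hc TK \hc T(g^*)$). The identification rests on three ingredients: the composition axiom for the double transformation $\iota$, which expresses $\iota_{K \hc g^*}$ as $\iota_K \hc \iota_{g^*}$ followed by the compositor $T_\hc$; the opcart-cart companion identity at $\iota_{D*}$, which glues the $\iota_{D*}$ on the target of $(\iota_{g^*})_*$ to the $\iota_{D*}$ on the source of $(\iota_K)_*$; and \lemref{naturality of double transformations}-style naturality of $\iota$ applied to the cartesian cell $K(\id, g) \Rar K$, which transports $\iota_{K(\id, g)}$ across the isomorphism $K(\id, g) \iso K \hc g^*$ to $\iota_{K \hc g^*}$.

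Once the pasting identity is in place the proof is immediate: $(\iota_K)_*$ and $(\iota_{g^*})_*$ are invertible by hypothesis, so the pasted candidate is invertible, hence so is $(\iota_{K(\id, g)})_*$, which is precisely the right Beck-Chevalley condition for $\iota_{K(\id, g)}$.

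The main obstacle is verifying the pasting identity in the middle step, which is mostly a matter of careful bookkeeping: the defining pattern $(\text{opcart}) \hc \iota \hc (\text{cart})$ for $(\iota_{K(\id, g)})_*$ has to be matched against the juxtaposition of two such patterns coming from $(\iota_{g^*})_*$ and $(\iota_K)_*$, and in the middle the compositor $T_\hc$ of the merely lax functor $T$ appears. This is not a genuine obstruction, however, because $T_\hc$ always sits sandwiched between (op)cartesian cells that, by \propref{lax double functors preserve cartesian cells}, together assemble into a single cartesian cell; it therefore never needs to be inverted on its own.
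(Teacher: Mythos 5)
Your proposal is correct and follows essentially the same route as the paper: reduce to $K(\id,g) \iso K \hc g^*$ via the naturality of $\iota$, apply the composition axiom for $\iota$ to split $\iota_{K \hc g^*}$ into $\iota_K$ and $\iota_{g^*}$ glued by the companion identity for $\iota_{D*}$, and absorb the compositor $T_\hc$ into the surrounding cartesian cells (exactly the identity $T(\id_K \hc \gamma) \of T_\hc = \id_{TK} \hc T\gamma$ the paper uses). No gaps.
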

	\begin{proof}
		Notice that, for any restriction $K(\id, g)$ of $K$ along $g$, we have $K(\id, g) \iso K \hc g^*$ by \lemref{cartesian and opcartesian cells in terms of companions and conjoints} so that, using the naturality of the double transformation $\nat\iota{\id_\K}T$ like we did in the proof of \lemref{right Beck-Chevalley condition for conjoints of right adjoints}, we may assume that $K(\id, g) = K \hc g^*$ without loss of generality.
		
		Consider the identity below, where $\cell{T_\hc}{TK \hc Tg^*}{T(K \hc g^*)}$ is the compositor of $T$, and where we have denoted by $\gamma$ the cartesian cell that defines the conjoint $g^*$. That the identity holds follows from the composition axiom for  $\iota$ (see Section 2 of \cite{Koudenburg14b}).
		\begin{displaymath}
			\begin{tikzpicture}[textbaseline]
				\matrix[math35](m){C & C & D & B & TB \\ C & TC & & TB & TB \\ C & TC && TD & TD \\};
				\path[map]	(m-1-2) edge[barred] node[above] {$K$} (m-1-3)
														edge node[right] {$\iota_C$} (m-2-2)
										(m-1-3) edge[barred] node[above] {$g^*$} (m-1-4)
										(m-1-4) edge[barred] node[above] {$\iota_{B*}$} (m-1-5)
														edge node[left] {$\iota_B$} (m-2-4)
										(m-2-1) edge[barred] node[below] {$\iota_{C*}$} (m-2-2)
										(m-2-2) edge[barred] node[below, inner sep=2pt] {$T(K \hc g^*)$} (m-2-4)
										(m-2-4) edge node[right] {$Tg$} (m-3-4)
										(m-2-5) edge node[right] {$Tg$} (m-3-5)
										(m-3-1) edge[barred] node[below] {$\iota_{C*}$} (m-3-2)
										(m-3-2) edge[barred] node[below] {$TK$} (m-3-4);
				\path				(m-1-1) edge[eq] (m-1-2)
														edge[eq] (m-2-1)
										(m-1-5) edge[eq] (m-2-5)
										(m-2-1) edge[eq] (m-3-1)
										(m-2-2) edge[eq] (m-3-2)
										(m-2-4) edge[eq] (m-2-5)
										(m-3-4) edge[eq] (m-3-5)
										(m-1-3) edge[cell, transform canvas={xshift=-11pt}] node[right] {$\iota_{K \hc g^*}$} (m-2-3)
										(m-2-3) edge[cell, transform canvas={shift={(-18pt,-2pt)}}] node[right] {$T(\id_K \hc \gamma)$} (m-3-3);
				\draw[transform canvas={shift={(1.75em,-1.625em)}}]	(m-1-1) node[small] {\opcart}
										(m-1-4) node[small] {\cart};
			\end{tikzpicture} = \mspace{-9mu}\begin{tikzpicture}[textbaseline]
				\matrix[math35](m){C & C & D & B & TB \\ C & TC & TD & TB & TB \\ C & TC & & TB & TB \\ C & TC & & TD & TD \\};
				\path[map]	(m-1-2) edge[barred] node[above] {$K$} (m-1-3)
														edge node[right, inner sep=1pt] {$\iota_C$} (m-2-2)
										(m-1-3) edge[barred] node[above] {$g^*$} (m-1-4)
														edge node[right, inner sep=1pt] {$\iota_D$} (m-2-3)
										(m-1-4) edge[barred] node[above] {$\iota_{B*}$} (m-1-5)
														edge node[right, inner sep=1pt] {$\iota_B$} (m-2-4)
										(m-2-1) edge[barred] node[below] {$\iota_{C*}$} (m-2-2)
										(m-2-2) edge[barred] node[below] {$TK$} (m-2-3)
										(m-2-3) edge[barred] node[below] {$Tg^*$} (m-2-4)
										(m-3-1) edge[barred] node[below] {$\iota_{C*}$} (m-3-2)
										(m-3-2) edge[barred] node[below] {$T(K \hc g^*)$} (m-3-4)
										(m-3-4) edge node[right] {$Tg$} (m-4-4)
										(m-3-5) edge node[right] {$Tg$} (m-4-5)
										(m-4-1) edge[barred] node[below] {$\iota_{C*}$} (m-4-2)
										(m-4-2) edge[barred] node[below] {$TK$} (m-4-4);
				\path				(m-1-1) edge[eq] (m-1-2)
														edge[eq] (m-2-1)
										(m-1-5) edge[eq] (m-2-5)
										(m-2-1) edge[eq] (m-3-1)
										(m-2-2) edge[eq] (m-3-2)
										(m-2-4) edge[eq] (m-3-4)
														edge[eq] (m-2-5)
										(m-2-5) edge[eq] (m-3-5)
										(m-3-1) edge[eq] (m-4-1)
										(m-3-2) edge[eq] (m-4-2)
										(m-3-4) edge[eq] (m-3-5)
										(m-4-4) edge[eq] (m-4-5)
										(m-2-3) edge[cell] node[right] {$T_\hc$} (m-3-3)
										(m-3-3) edge[cell, transform canvas={shift={(-18pt, -2pt)}}] node[right] {$T(\id_K \hc \gamma)$} (m-4-3);
				\path[transform canvas={xshift=1.75em}]	(m-1-2) edge[cell] node[right] {$\iota_K$} (m-2-2)
										(m-1-3) edge[cell] node[right] {$\iota_{g^*}$} (m-2-3);
				\draw[transform canvas={shift={(1.75em,-1.625em)}}]	(m-1-1) node[small] {\opcart}
										(m-1-4) node[small, xshift=3pt] {\cart};
			\end{tikzpicture}
		\end{displaymath}
		We claim that the bottom row of the left-hand side as well as the bottom two rows of the right-hand side are cartesian. Indeed, this follows from  \lemref{cartesian and opcartesian cells in terms of companions and conjoints}, the fact that $T$ preserves cartesian cells and that, in the right-hand side, $T(\id_K \hc \gamma) \of T_\hc = \id_{TK} \hc T\gamma$ by the naturality of $T_\hc$ and its unit axiom (see Section 2 of \cite{Koudenburg14b}).
		
		We conclude that the top row of left-hand side, which equals $(\iota_{K \hc g^*})_*$, is invertible precisely if that of the right-hand side is. Using the companion identity for $\iota_{D*}$ we see that the latter coincides with the composite of $\iota_{K*}$ and $(\iota_{g^*})_*$, from which the lemma follows.
	\end{proof}
	
	\section{Yoneda embeddings}
	Finally we need a notion of `yoneda embedding' in equipments. The following definition is an adaptation of axioms satisfied by the morphisms that make up a `yoneda structure' on a $2$-category, in the sense of \cite{Weber07}.
	\begin{definition} \label{yoneda embedding}
		A morphism $\map{\yon_A}A{\ps A}$ in a double category is called a \emph{yoneda embedding} if it satisfies the following axioms:
		\begin{itemize}
			\item[(c)] for every horizontal morphism $\hmap JAB$ there exists a cartesian cell as on the left below;
			\item[(e)] if a cell $\eta$, as on the right, is cartesian then it defines $l$ as the pointwise left Kan extension of $\yon_A$ along $J$.
		\end{itemize}
		\begin{displaymath}
			\begin{tikzpicture}[baseline]
		    \matrix(m)[math35]{A & B \\ \ps A & \ps A \\};
		    \path[map]  (m-1-1) edge[barred] node[above] {$J$} (m-1-2)
		                        edge[ps] node[left] {$\yon_A$} (m-2-1)
		                (m-1-2) edge[ps] node[right] {$g$} (m-2-2);
		    \path				(m-2-1) edge[eq] (m-2-2);
		    \draw				($(m-1-1)!0.5!(m-2-2)$) node[small] {cart};
		  \end{tikzpicture} \qquad\qquad\qquad\qquad \begin{tikzpicture}[baseline]
	    	\matrix(m)[math35]{A & B \\ \ps A & \ps A \\};
	    	\path[map]  (m-1-1) edge[barred] node[above] {$J$} (m-1-2)
	    	                    edge[ps] node[left] {$\yon_A$} (m-2-1)
	    	            (m-1-2) edge[ps] node[right] {$l$} (m-2-2);
	    	\path				(m-2-1) edge[eq] (m-2-2);
	    	\path[transform canvas={shift={($(m-1-2)!(0,0)!(m-2-2)$)}}] (m-1-1) edge[cell] node[right] {$\eta$} (m-2-1);
	  	\end{tikzpicture}
	  \end{displaymath}
	\end{definition}
	
	Before showing that in the equipment $\enProf\V$ the usual yoneda embeddings satisfy the definition above, we consider some conditions that are equivalent to its condition (e). Recall that every double category $\K$ contains a horizontal bicategory $H(\K)$ consisting of its objects, its horizontal morphisms and its horizontal cells.
	
	\begin{lemma} \label{equivalent axioms}
		Consider a morphism $\map{\yon_A}A{\ps A}$ in a double category $\K$. If its companion $\hmap{\yon_{A*}}A{\ps A}$ exists then axiom (e) of the definition above is equivalent to condition (a) below. Additionally, if $\K$ is an equipment then both (e) and (a) are equivalent to condition (b).
		\begin{itemize}
			\item[(a)] 	The cartesian cell below, that defines the companion of $\yon_A$, defines $\id_{\ps A}$ as the pointwise left Kan extension of $\yon_A$ along $\yon_{A*}$;
			\begin{displaymath}
				\begin{tikzpicture}
		  	  \matrix(m)[math35]{A & \ps A \\ \ps A & \ps A \\};
		  	  \path[map]  (m-1-1) edge[barred] node[above] {$\yon_{A*}$} (m-1-2)
		  	                      edge[ps] node[left] {$\yon_A$} (m-2-1);
		  	  \path       (m-1-2) edge[eq, ps] (m-2-2)
		  	  						(m-2-1) edge[eq] (m-2-2);
		  	  \draw				($(m-1-1)!0.5!(m-2-2)$) node[small] {\textup{cart}};
		  	\end{tikzpicture}
	    \end{displaymath}
			\item[(b)]	for any morphisms $\hmap H{\ps A}C$ and $\map kC{\ps A}$ the assignment
	  	\begin{displaymath}
	  		\map{\yon_{A*} \hc \dash}{H(\K)(\ps A, C)(H, k^*)}{H(\K)(A, C)(\yon_{A*} \hc H, \yon_{A*} \hc k^*)},
	  	\end{displaymath}
	  	that is given by whiskering with $\yon_{A*}$, is bijective.
	  \end{itemize}
	\end{lemma}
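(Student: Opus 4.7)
The plan is to establish (e) $\Rightarrow$ (a), (a) $\Rightarrow$ (e), and---under the equipment hypothesis---(a) $\Leftrightarrow$ (b).

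The implication (e) $\Rightarrow$ (a) is immediate: the cartesian cell defining $\yon_{A*}$ has exactly the shape considered in (e), taking $B = \ps A$, $J = \yon_{A*}$ and $l = \id_{\ps A}$; so by (e) it exhibits $\id_{\ps A}$ as the pointwise left Kan extension of $\yon_A$ along $\yon_{A*}$. For (a) $\Rightarrow$ (e), let $\eta$ be an arbitrary cartesian cell as in (e), with vertical target $l \colon B \to \ps A$ and horizontal source $J$, and write $\eps$ for the cartesian cell defining $\yon_{A*}$. Factor $\eta$ vertically through $\eps$: its universal property yields a unique cell $\hat\eta \colon J \Rar \yon_{A*}$ with vertical source $\id_A$ and target $l$ satisfying $\eta = \eps \of \hat\eta$, and by the pasting lemma for cartesian cells $\hat\eta$ is itself cartesian, so $J$ is exhibited as the restriction $\yon_{A*}(\id, l)$. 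Since (a) identifies $\eps$ as the pointwise left Kan extension of $\yon_A$ along $\yon_{A*}$, applying \lemref{restriction of pointwise left Kan extensions} with restricting morphism $l$ then identifies $\eta = \eps \of \hat\eta$ as the pointwise left Kan extension of $\yon_A$ along $J$, which is (e).

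For (a) $\Leftrightarrow$ (b) in the equipment case, I would use \lemref{cartesian and opcartesian cells in terms of companions and conjoints} to obtain the identifications $1_{\ps A}(\yon_A, k) \iso \yon_{A*} \hc k^*$ and $1_{\ps A}(\id, k) \iso k^*$. The universal properties of the cartesian cells defining these restrictions then give bijections between cells $\yon_A \Rar k$ with horizontal source $\yon_{A*} \hc H$ and target $1_{\ps A}$ and horizontal cells $\yon_{A*} \hc H \Rar \yon_{A*} \hc k^*$, and similarly between cells $\id_{\ps A} \Rar k$ with horizontal source $H$ and target $1_{\ps A}$ and horizontal cells $H \Rar k^*$. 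Under these two bijections the factorisation operation $\psi \mapsto \eps \hc \psi$---whose bijectivity is the content of (a)---corresponds to whiskering horizontal cells by $\yon_{A*}$, which is the bijection of (b); hence (a) and (b) are equivalent.

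The step I expect to require the most care is the appeal to \lemref{restriction of pointwise left Kan extensions} in (a) $\Rightarrow$ (e) when only the companion $\yon_{A*}$ is assumed to exist, since that lemma requires the conjoint $l^*$ and so is not directly available in the non-equipment setting of the first part of the statement. Here I would either extend that lemma to the generality of an arbitrary cartesian cell $\hat\eta \colon J \Rar \yon_{A*}$ (the proof adapts by using $\hat\eta$ in place of the composite $\yon_{A*} \hc l^*$), or argue directly: given any test cell $\phi \colon \yon_A \Rar k$ with horizontal source $J \hc H$, use the cartesianness of $\hat\eta$ together with the pointwise left Kan extension property of $\eps$ granted by (a) to produce the required unique factorising cell $\psi \colon l \Rar k$.
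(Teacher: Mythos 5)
Your argument follows the paper's proof essentially step for step: (e) $\Rightarrow$ (a) is immediate, (a) $\Rightarrow$ (e) is obtained by factoring $\eta$ through the cartesian cell defining $\yon_{A*}$, noting the factorisation is cartesian by the pasting lemma, and invoking \lemref{restriction of pointwise left Kan extensions}, and (a) $\Leftrightarrow$ (b) is obtained by transporting both factorisation properties across the composite cartesian cell exhibiting $\yon_{A*} \hc k^* \iso 1_{\ps A}(\yon_A, k)$. The subtlety you flag---that \lemref{restriction of pointwise left Kan extensions} as stated requires the conjoint $l^*$, which is not guaranteed outside the equipment setting---is genuine, but the paper's own proof cites that lemma without comment, so your treatment is, if anything, the more careful one.
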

	\begin{proof}
		(e) $\Rightarrow$ (a) is clear.
		
		(a) $\Rightarrow$ (e). Given a cartesian cell $\eta$ as on the left below, we consider its factorisation $\eta'$ as shown; it is cartesian because $\eta$ is, by the pasting lemma. The implication follows from applying \lemref{restriction of pointwise left Kan extensions}.
		\begin{displaymath}
			\begin{tikzpicture}[textbaseline]
	    	\matrix(m)[math35]{A & B \\ \ps A & \ps A \\};
	    	\path[map]  (m-1-1) edge[barred] node[above] {$J$} (m-1-2)
	    	                    edge[ps] node[left] {$\yon_A$} (m-2-1)
	    	            (m-1-2) edge[ps] node[right] {$l$} (m-2-2);
	    	\path				(m-2-1) edge[eq] (m-2-2);
	    	\path[transform canvas={shift={($(m-1-2)!(0,0)!(m-2-2)$)}}] (m-1-1) edge[cell] node[right] {$\eta$} (m-2-1);
	  	\end{tikzpicture} = \mspace{-12mu}\begin{tikzpicture}[textbaseline]
	  		\matrix(m)[math35]{A & B \\ A & \ps A \\ \ps A & \ps A \\};
	  		\path[map]	(m-1-1) edge[barred] node[above] {$J$} (m-1-2)
	  								(m-1-2) edge[ps] node[right] {$l$} (m-2-2)
	  								(m-2-1) edge[barred] node[below] {$\yon_{A*}$} (m-2-2)
	  												edge[ps] node[left] {$\yon_A$} (m-3-1);
	  		\path				(m-1-1) edge[eq] (m-2-1)
	  								(m-2-2) edge[eq, ps] (m-3-2)
	  								(m-3-1) edge[eq] (m-3-2);
	  		\path[transform canvas={xshift=1.75em}]	(m-1-1) edge[cell] node[right] {$\eta'$} (m-2-1);
	  		\draw[transform canvas={shift={(1.75em, -1.625em)}}] (m-2-1) node[small] {cart};
	  	\end{tikzpicture} \mspace{24mu} \begin{tikzpicture}[textbaseline]
	  		\matrix(m)[math35]{A & \ps A & C \\ \ps A & & \ps A \\};
	  		\path[map]	(m-1-1) edge[barred] node[above] {$\yon_{A*}$} (m-1-2)
	  												edge[ps] node[left] {$\yon_A$} (m-2-1)
	  								(m-1-2) edge[barred] node[above] {$H$} (m-1-3)
	  								(m-1-3) edge[ps] node[right] {$k$} (m-2-3);
	  		\path				(m-2-1) edge[eq] (m-2-3)
	  								(m-1-2) edge[cell] node[right] {$\phi$} (m-2-2);						
	  	\end{tikzpicture} \mspace{24mu} \begin{tikzpicture}[textbaseline]
	  		\matrix(m)[math35]{A & \ps A & C \\ \ps A & \ps A & \ps A \\};
	  		\path[map]	(m-1-1) edge[barred] node[above] {$\yon_{A*}$} (m-1-2)
	  												edge[ps] node[left] {$\yon_A$} (m-2-1)
	  								(m-1-2) edge[barred] node[above] {$k^*$} (m-1-3)
	  								(m-1-3) edge[ps] node[right] {$k$} (m-2-3);
	  		\path				(m-1-2) edge[ps, eq] (m-2-2)
	  								(m-2-1) edge[eq] (m-2-2)
	  								(m-2-2) edge[eq] (m-2-3);
	  		\draw[transform canvas={shift={(1.75em, -1.625em)}}]	(m-1-1) node[small] {cart}
	  								(m-1-2) node[small] {cart};
	  	\end{tikzpicture}
		\end{displaymath}
		
		(a) $\Leftrightarrow$ (b). Notice that condition (a) is the statement that any cell $\phi$, as in the middle above, factors uniquely through the cartesian cell defining $\yon_{A*}$, while condition (b) states that any horizontal cell $\yon_{A*} \hc H \Rightarrow \yon_{A*} \hc k^*$ factors uniquely through the identity cell $\id_{\yon_{A*}}$. Consider the composite of cartesian cells on the right above; it is itself cartesian by \lemref{cartesian and opcartesian cells in terms of companions and conjoints}. The equivalence of (a) and (b) follows from the fact that, under the unique factorisations of cells of the form $\phi$ through the cartesian cell on the right, the unique factorisations of (a) correspond precisely to those of (b).
	\end{proof}
	
	\begin{example} \label{enriched yoneda embedding}
		Let $\V = (\V, \tens, \brks{\dash, \dash})$ be a closed symmetric monoidal category that is both complete and cocomplete. In the equipment $\enProf\V$ of $\V$-profunctors the usual yoneda embedding $\map{\yon_A}A{\ps A = \brks{\op A, \V}}$, that maps $x \in A$ to the representable $\V$-presheaf $\yon_A x = A(\dash, x)$, satisfies the axioms of \defref{yoneda embedding}. For axiom (c): given a $\V$-profunctor $\hmap JAB$ we can take $\map gB{\ps A}$ to be given by $gy = J(\dash ,y)$ and the cartesian cell to consist of the yoneda isomorphisms $J(x,y) \iso \ps A(\yon_A x, gy)$.
		
		To see that condition (b) of the previouse lemma holds, we notice that the functor
		\begin{displaymath}
			\map{\yon_{A*} \hc \dash}{H(\enProf\V)(\ps A, B)}{H(\enProf\V)(A, B)}
		\end{displaymath}
		has a right adjoint $E$ that maps $\hmap JAB$ to the $\V$-profunctor given by $(EJ)(p, y) = \ps A(p, J(\dash, y))$. The components $\yon_{A*} \hc EJ \Rightarrow J$ of the counit of this adjunction are given by the isomorphisms $(\yon_{A*} \hc EJ)(x,y) \iso \ps A(\yon_A x, J(\dash, y)) \iso J(x,y)$. Those $\nat{\eta_H}H{E(\yon_{A*} \hc H)}$ of the unit, where $\hmap H{\ps A}B$, are induced by the family of maps $H(p, y) \to \brks{px, (\yon_{A*} \hc H)(x,y)}$ that are adjoint to
		\begin{displaymath}
			H(p, y) \tens px \iso H(p,y) \tens \ps A(\yon_A x, p) \xrar{\textup{act}} H(\yon_A x, y) \iso (\yon_{A*} \hc H)(x,y).
		\end{displaymath}
		That condition (b) holds now follows from the fact that $\eta_{g^*}$ is invertible for each $\map gB{\ps A}$, which is not hard to check.
	\end{example}
	
	We close this section with the following variation of Lemma 3.2 of \cite{Weber07}. Notice that it implies that, like in the classical case, any yoneda embedding \mbox{$\map{\yon_A}A{\ps A}$} is full and faithful, by applying it to the unit cell $1_{\yon_A}$ which trivially defines $\yon_A$ as the pointwise left Kan extension of $\yon_A$ along $1_A$.
	\begin{lemma} \label{left Kan extensions along yoneda embeddings}
		Let $\map{\yon_A}A{\ps A}$ be a yoneda embedding in a double category. If the cell $\eta$ below defines $l$ as the left Kan extension of $J$ along $\yon_A$ then it is cartesian and, moreover, it defines $l$ as a pointwise left Kan extension.
		\begin{displaymath}
			\begin{tikzpicture}
	    	\matrix(m)[math35]{A & B \\ \ps A & \ps A \\};
	    	\path[map]  (m-1-1) edge[barred] node[above] {$J$} (m-1-2)
	    	                    edge[ps] node[left] {$\yon_A$} (m-2-1)
	    	            (m-1-2) edge[ps] node[right] {$l$} (m-2-2);
	    	\path				(m-2-1) edge[eq] (m-2-2);
	    	\path[transform canvas={shift={($(m-1-2)!(0,0)!(m-2-2)$)}}] (m-1-1) edge[cell] node[right] {$\eta$} (m-2-1);
	  	\end{tikzpicture}
	  \end{displaymath}
	\end{lemma}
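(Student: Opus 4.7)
The plan is to compare $\eta$ with a cartesian cell produced from the yoneda axioms and to transfer the cartesian property via an invertible vertical cell supplied by the uniqueness of left Kan extensions. First I would invoke axiom~(c) of \defref{yoneda embedding} applied to $\hmap JAB$, which yields a cartesian cell $\eta'$ of the same shape as $\eta$ but with right vertical morphism some $\map gB{\ps A}$ in place of $l$. Axiom~(e) then gives that $\eta'$ defines $g$ as the pointwise left Kan extension of $\yon_A$ along $J$, and in particular as a left Kan extension in the weaker sense.

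Both $\eta$ and $\eta'$ thus define left Kan extensions of $\yon_A$ along $J$, so the standard uniqueness argument supplies a unique invertible vertical cell $\cell\psi lg$ satisfying $\eta' = \eta \hc \tilde\psi$ and $\eta = \eta' \hc \widetilde{\inv\psi}$, where the tilde denotes the corresponding vertical cell of $\K$.

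To conclude that $\eta$ is cartesian, I would take an arbitrary test cell $\chi$ with top $\hmap HXY$, bottom $1_{\ps A}$, left $\yon_A \of h$, and right $l \of k$. Horizontally composing $\chi$ with the whiskered cell $\widetilde{\psi \cdot k}$ yields a cell $\chi^*$ with right vertical morphism $g \of k$; cartesianness of $\eta'$ then supplies a unique vertical cell $\xi$, with top $H$, bottom $J$, left $h$, and right $k$, satisfying $\chi^* = \eta' \of \xi$. Post-composing with $\widetilde{\inv\psi \cdot k}$ and applying the double-categorical interchange law to the decomposition of this last cell as the vertical composite of $1_k$ on top with $\widetilde{\inv\psi}$ on the bottom transports this factorisation to $\chi = \eta \of \xi$; the same transport preserves uniqueness. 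A final appeal to axiom~(e) for the now-cartesian $\eta$ delivers the pointwise property. The main obstacle is the interchange bookkeeping of this last step, but everything else reduces to a direct application of the yoneda axioms together with the uniqueness of left Kan extensions.
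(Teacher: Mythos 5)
Your proposal is correct and follows essentially the same route as the paper: produce a cartesian cell from axiom (c) of \defref{yoneda embedding}, note via axiom (e) that it defines a pointwise left Kan extension, obtain an invertible vertical comparison cell from the uniqueness of left Kan extensions, and transfer cartesianness (and hence pointwiseness) across it. The paper simply treats the interchange bookkeeping you spell out as immediate, and concludes the pointwise property directly from the factorisation through the cartesian cell rather than by a second appeal to axiom (e); both are fine.
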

	\begin{proof}
		By axiom (a) of \defref{yoneda embedding} there exists a morphism $\map gB{\ps A}$ such that $J$ is the restriction of $1_{\ps A}$ along $\yon_A$ and $g$, and the cartesian cell $\eps$ defining $J$ as this restriction defines $g$ as the pointwise left Kan extension of $\yon_A$ along $J$ by axiom (e). Since $\eta$ defines the same left Kan extension we conclude that it factors through $\eps$ as an invertible vertical cell $g \iso l$; it follows that, like $\eps$, $\eta$ is cartesian and defines $l$ as a pointwise left Kan extension.
	\end{proof}
	
	\section{The results}
	Now we are ready to formulate and prove a generalisation of \thmref{finite-product-preserving left Kan extensions}, as well as some related results. Throughout this section we consider a normal lax double monad $T = (T, \mu, \iota)$ on an equipment $\K$, and assume that it restricts to a colax-idempotent $2$-monad (again denoted $T$) on $V(\K)$.
	
	\subsection{In the case that \texorpdfstring{$d$}{d} is of the form \texorpdfstring{$\map dA\ps M$}{d: A → M̂}}
	We start with the result that generalises \thmref{finite-product-preserving left Kan extensions}. It concerns pointwise left Kan extensions along maps that are of the form $\map dA\ps M$, where $\ps M$ is the target of a yoneda embedding $\map{\yon_M}M\ps M$. As in \exref{right Beck-Chevalley condition} we write $D = \ps M(\yon_M, d)$ for the restriction that is defined by the cartesian cell on the left below.
	\begin{displaymath}
		\begin{tikzpicture}[baseline]
		  \matrix(m)[math35]{ M & A \\ \ps M & \ps M \\};
		  \path[map]  (m-1-1) edge[barred] node[above] {$D$} (m-1-2)
		                      edge[ps] node[left] {$\yon_M$} (m-2-1)
		              (m-1-2) edge[ps] node[right] {$d$} (m-2-2);
		  \path				(m-2-1) edge[eq] (m-2-2);
		  \draw				($(m-1-1)!0.5!(m-2-2)$) node[small] {cart};
		\end{tikzpicture} \qquad\qquad\qquad \begin{tikzpicture}[baseline]
	    \matrix(m)[math35]{A & B \\ \ps M & \ps M \\};
	    \path[map]  (m-1-1) edge[barred] node[above] {$j_*$} (m-1-2)
	                        edge[ps] node[left] {$d$} (m-2-1)
	                (m-1-2) edge[ps] node[right] {$l$} (m-2-2);
	    \path				(m-2-1) edge[eq] (m-2-2);
	    \path[transform canvas={shift={($(m-1-2)!(0,0)!(m-2-2)$)}}] (m-1-1) edge[cell] node[right] {$\eta$} (m-2-1);
	  \end{tikzpicture} \qquad\qquad\qquad \begin{tikzpicture}[textbaseline]
			\matrix(m)[math35, column sep=1.5em]{TA & TB \\ T\ps M & T\ps M \\ \ps M & \ps M \\};
			\path[map]	(m-1-1) edge[barred] node[above] {$Tj_*$} (m-1-2)
													edge[ps] node[left] {$Td$} (m-2-1)
									(m-1-2) edge[ps] node[right] {$Tl$} (m-2-2)
									(m-2-1) edge[ps] node[left] {$w$} (m-3-1)
									(m-2-2) edge[ps] node[right] {$w$} (m-3-2);
			\path				(m-2-1) edge[eq] (m-2-2)
									(m-3-1) edge[eq] (m-3-2);
			\path[transform canvas={shift=(m-2-2)}]	(m-1-1) edge[cell] node[right] {$T\eta$} (m-2-1);
		\end{tikzpicture}
	\end{displaymath}
	Next we generalise the situation of \thmref{finite-product-preserving left Kan extensions}: suppose that the cell $\eta$ in the middle above defines $\map lB{\ps M}$ as the pointwise left Kan extension of $d$ along the companion of $\map jAB$, and that $B$ and $\ps M$ admit pseudo $T$-algebra structures $(b, \bar b, \hat b)$ and $(w, \bar w, \hat w)$ respectively. Remember that, $T$ being colax-idempotent, this means that $\iota_B \ladj b$ and $\iota_{\ps M} \ladj w$, and that $\map lB{\ps M}$ admits a unique colax $T$\ndash morphism structure $\cell{\bar l}{l \of b}{w \of Tl}$, that is given as a composite of the unitor $\hat w$ of $\ps M$ and the counit $\theta_b$ of the adjunction $\iota_B \ladj b$, like in \eqref{colax algebra structure}.
	
	\begin{theorem} \label{thm1}
		In the above situation the structure cell $\bar l$ is invertible whenever the cell $\iota_D$ satisfies the right Beck-Chevalley condition and the composite $1_w \of T\eta$, on the right above, defines $w \of Tl$ as the left Kan extension of $w \of Td$ along $Tj_*$. The converse holds as soon as $\map jAB$ is full and faithful.
	\end{theorem}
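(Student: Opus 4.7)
The plan is to use \propref{right Beck-Chevalley condition for colax T-morphisms}, which reduces invertibility of $\bar l$ to invertibility of the horizontal cell $(\iota_{l^*})_*$, and then to analyse this cell through the identification $\yon_{M*} \hc l^* \iso D \hc j_*$ supplied by the yoneda embedding. To establish the identification I would paste the cartesian cell defining $D = \ps M(\yon_M, d)$ on top of $\eta$: by axiom~(e) of \defref{yoneda embedding} the former cell defines $d$ as the pointwise left Kan extension of $\yon_M$ along $D$, so pasting with $\eta$ and invoking the pasting lemma for pointwise left Kan extensions produces a cell that defines $l$ as the pointwise left Kan extension of $\yon_M$ along $D \hc j_*$. \lemref{left Kan extensions along yoneda embeddings} then forces this pasting to be cartesian, which (together with \lemref{cartesian and opcartesian cells in terms of companions and conjoints}) gives $D \hc j_* \iso \ps M(\yon_M, l) \iso \yon_{M*} \hc l^*$. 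Denote this common horizontal morphism by $L$.

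Next I would expand the cell $(\iota_L)_*$ in two ways, along the lines of the proof of \lemref{right Beck-Chevalley condition for restrictions}: through $L \iso \yon_{M*} \hc l^*$ as a pasting involving $(\iota_{\yon_{M*}})_*$, $(\iota_{l^*})_*$ and a compositor of $T$; through $L \iso D \hc j_*$ as a pasting involving $(\iota_D)_*$, $(\iota_{j_*})_*$ and another compositor. The cells $(\iota_{\yon_{M*}})_*$ and $(\iota_{j_*})_*$ are invertible by \propref{right Beck-Chevalley condition for companions}, and the compositor adjacent to each absorbs---via \lemref{compositor on companions} and the unit axiom for $T$, dualising the argument from the proof of \lemref{right Beck-Chevalley condition for restrictions} to the companion case---into the cartesian cell defining the companion of $T\yon_M$ respectively $Tj$. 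Consequently $(\iota_L)_*$ is invertible if and only if $(\iota_{l^*})_*$ is, and also if and only if $(\iota_D)_*$ is invertible together with a residual cell $\psi$ that is what remains of the compositor on the $D \hc j_*$-side after absorption. A further pasting calculation, translating $\psi$ through the adjunction $\iota_{\ps M} \ladj w$ (whose unit $\hat w$ and counit $\theta_w$ together exhibit $w$ as right adjoint to $\iota_{\ps M}$), should identify $\psi$ with the canonical comparison cell witnessing $1_w \of T\eta$ as a left Kan extension; thus $\psi$ is invertible precisely when $1_w \of T\eta$ defines $w \of Tl$ as the left Kan extension of $w \of Td$ along $Tj_*$.

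Assembling these equivalences yields both directions. For the forward direction the two hypotheses make $(\iota_D)_*$ and $\psi$ invertible, hence $(\iota_L)_*$ invertible, hence $(\iota_{l^*})_*$ invertible, hence $\bar l$ invertible. For the converse, $\bar l$ invertible gives $(\iota_L)_*$ invertible; peeling this into $(\iota_D)_*$ and $\psi$ separately uses full-and-faithfulness of $j$, which by the discussion preceding \lemref{pointwise left Kan extension along full and faithful map} makes the opcartesian cell defining $j_*$ also cartesian, and this extra invertibility is exactly what lets the $D \hc j_*$-expansion be separated into its two factors. The main obstacle I expect is the identification of $\psi$ with the comparison cell for $1_w \of T\eta$: a delicate pasting computation combining the composition axiom for $\iota$, the adjunction $\iota_{\ps M} \ladj w$ and the cartesian cell defining $l^*$, very much in the spirit of the proof of \propref{right Beck-Chevalley condition for colax T-morphisms}.
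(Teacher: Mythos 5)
Your opening reduction is sound and genuinely different from the paper's route: \propref{right Beck-Chevalley condition for colax T-morphisms} does reduce invertibility of $\bar l$ to invertibility of $(\iota_{l^*})_*$, and the identification $D \hc j_* \iso \yon_{M*} \hc l^*$ (via the pasting lemma, axiom (e) and \lemref{left Kan extensions along yoneda embeddings}) is correct. But the core of your argument rests on equivalences that are not established and are in fact the hard content of the theorem. First, the cancellation steps: from ``$(\iota_L)_*$ is invertible'' you want to extract invertibility of a single factor of a vertical composite of whiskered horizontal cells. Invertibility of a composite does not pass to its factors, and invertibility of $\id_{\yon_{M*}} \hc (\iota_{l^*})_*$ does not give invertibility of $(\iota_{l^*})_*$ unless you can cancel the whiskering; condition (b) of \lemref{equivalent axioms} only provides this for cells whose target is a conjoint $k^*$, so you would at least need to identify $\iota_{\ps M*} \hc Tl^* \iso w^* \hc (Tl)^* \iso (w \of Tl)^*$ (using $\iota_{\ps M} \ladj w$ and the preservation of conjoints by $T$) before that lemma applies---none of which appears in your sketch. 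The same issue recurs when you ``peel'' the $D \hc j_*$-expansion into $(\iota_D)_*$ and $\psi$.

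Second, and more seriously, the step you yourself flag as ``the main obstacle'' is the actual theorem. The hypothesis that $1_w \of T\eta$ defines $w \of Tl$ as a left Kan extension is a universal property; to convert it into invertibility of a residual cell $\psi$ you must first exhibit some other cell already known to define the left Kan extension of $w \of Td$ along $Tj_*$, and only then is there a comparison cell whose invertibility is equivalent to the hypothesis. Producing that cell is exactly what the paper's proof does: it establishes the identity \eqref{thm1-identity}, shows that the composites $\phi$ and $\psi$ occurring in it are cartesian and hence define pointwise left Kan extensions by axiom (e) of \defref{yoneda embedding}, and then runs the uniqueness of Kan extensions in both directions, with the full-and-faithfulness of $j$ (via the cartesianness of $T\eps$) entering only in the converse. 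Without such a construction your two hypotheses cannot be decoupled into independent invertibility conditions on $(\iota_D)_*$ and $\psi$, so both directions of your argument are incomplete as stated. The strategy might well be completable, but the missing pieces are precisely the pasting computations that constitute the paper's proof.
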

	
	In our case, where $T$ is the double monad of categories with finite products, the composite $1_w \of T\eta$ on the right above defines $w \of Tl$ as a left Kan extension as soon as taking finite products $(x_1, \dotsc, x_n) \mapsto (x_1 \times \dotsb \times x_n)$ in $\ps M$ preserves colimits in each variable. This happens in particular when $\ps M$ is closed cartesian, e.g.\ when $\ps M = \brks{\op M, \Set}$. We conclude that, by taking $M = 1$ and by taking \mbox{$\map{\yon_M}1{\brks{1, \Set} \iso \Set}$} and $j = \map{\yon_A}A{\brks{\op A, \Set}}$ to be the usual yoneda embeddings, the theorem above reduces to the assertion that $\bar l$ is invertible precisely if $\iota_D$ satisfies the right Beck-Chevalley condition. This, as follows from our discussions in Section 1 and \exref{right Beck-Chevalley condition}, is a reformulation of \thmref{finite-product-preserving left Kan extensions}.
	
	\begin{proof}[Proof of \thmref{thm1}]
		To start we denote by $\zeta$ the factorisation of the cartesian cell that defines the restriction $D = \ps M(\yon_M, d)$, through the cartesian cell defining $\yon_{M*}$, as shown below. Notice that $\zeta$ is cartesian by the pasting lemma and, since lax double functors preserve cartesian cells (see \propref{lax double functors preserve cartesian cells}), its $T$-image $T\zeta$ is cartesian too.
		\begin{equation} \label{factorisation of cart}
			\begin{tikzpicture}[textbaseline]
			  \matrix(m)[math35]{ M & A \\ \ps M & \ps M \\};
			  \path[map]  (m-1-1) edge[barred] node[above] {$D$} (m-1-2)
			                      edge[ps] node[left] {$\yon_M$} (m-2-1)
			              (m-1-2) edge[ps] node[right] {$d$} (m-2-2);
			  \path				(m-2-1) edge[eq] (m-2-2);
			  \draw				($(m-1-1)!0.5!(m-2-2)$) node[small] {cart};
			\end{tikzpicture}  = \begin{tikzpicture}[textbaseline]
	  		\matrix(m)[math35]{ M & A \\ M & \ps M \\ \ps M & \ps M \\};
	  		\path[map]	(m-1-1) edge[barred] node[above] {$D$} (m-1-2)
	  								(m-1-2) edge[ps] node[right] {$d$} (m-2-2)
	  								(m-2-1) edge[barred] node[below] {$\yon_{M*}$} (m-2-2)
	  												edge[ps] node[left] {$\yon_M$} (m-3-1);
	  		\path				(m-1-1) edge[eq] (m-2-1)
	  								(m-2-2) edge[eq, ps] (m-3-2)
	  								(m-3-1) edge[eq] (m-3-2);
	  		\path[transform canvas={shift=(m-2-2)}]	(m-1-1) edge[cell] node[right] {$\zeta$} (m-2-1);
	  		\draw				($(m-2-1)!0.5!(m-3-2)$)	node[small] {cart};
	  	\end{tikzpicture}
		\end{equation}
		
		The idea is to find a cell that defines $l \of b$ as a pointwise left Kan extension, and to consider the structure cell $\cell{\bar l}{l \of b}{w \of Tl}$ as a factorisation through this cell. To this end consider the two composites below. Here in the left-hand side the isomorphism $\iota_{A*} \hc Tj_* \iso j_* \hc \iota_{B*}$ is the factorisation of the composite of the opcartesian cells defining $j_*$ and $\iota_{B*}$ through that of those defining $\iota_{A*}$ and $T{j_*}$; remember from the discussion following \eqref{companion pseudofunctor} that both these composites are opcartesian: they both define the companion of $\iota_B \of j = Tj \of \iota_A$. The isomorphism $\iota_{M*} \hc T\yon_{M*} \iso \yon_{M*} \hc \iota_{\ps M*}$ is defined analogously, while the cells $\hat b'$ and $\hat w'$ are factorisations of the unit cells $\hat b$ and $\hat w$ through the opcartesian cells defining the companions $\iota_{B*}$ and $\iota_{\ps M*}$. Notice that both $\hat b'$ and $\hat w'$ are cartesian by \lemref{unit of adjunction as cartesian cell}, as they are the units of the adjunctions $\iota_B \ladj b$ and $\iota_{\ps M} \ladj w$.
		\begin{equation} \label{thm1-identity}
			\begin{tikzpicture}[textbaseline]
				\matrix(m)[math35]{M & A & TA & TB & TB \\ M & A & B & TB & TB \\ M & A & B & B & T\ps M \\ \ps M & \ps M & \ps M & \ps M & \ps M \\};
				\path[map]	(m-1-1) edge[barred] node[above] {$D$} (m-1-2)
										(m-1-2) edge[barred] node[above] {$\iota_{A*}$} (m-1-3)
										(m-1-3) edge[barred] node[above] {$Tj_*$} (m-1-4)
										(m-2-1) edge[barred] node[above] {$D$} (m-2-2)
										(m-2-2) edge[barred] node[above] {$j_*$} (m-2-3)
										(m-2-3) edge[barred] node[above] {$\iota_{B*}$} (m-2-4)
										(m-2-4) edge node[right] {$b$} (m-3-4)
										(m-2-5) edge[ps] node[right] {$Tl$} (m-3-5)
										(m-3-1) edge[barred] node[above] {$D$} (m-3-2)
														edge[ps] node[left] {$\yon_M$} (m-4-1)
										(m-3-2) edge[barred] node[above] {$j_*$} (m-3-3)
														edge[ps] node[right] {$d$} (m-4-2)
										(m-3-3) edge[ps] node[right] {$l$} (m-4-3)
										(m-3-4) edge[ps] node[right] {$l$} (m-4-4)
										(m-3-5) edge[ps] node[right] {$w$} (m-4-5);
				\path				(m-1-1) edge[eq] (m-2-1)
										(m-1-2) edge[eq] (m-2-2)
										(m-1-4) edge[eq] (m-1-5)
														edge[eq] (m-2-4)
										(m-1-5) edge[eq] (m-2-5)
										(m-2-1) edge[eq] (m-3-1)
										(m-2-2) edge[eq] (m-3-2)
										(m-2-3) edge[eq] (m-3-3)
										(m-2-4) edge[eq] (m-2-5)
										(m-3-3) edge[eq] (m-3-4)
										(m-4-1) edge[eq] (m-4-2)
										(m-4-2) edge[eq] (m-4-3)
										(m-4-3) edge[eq] (m-4-4)
										(m-4-4) edge[eq] (m-4-5)
										(m-1-3) edge[cell] node[right] {$\iso$} (m-2-3);
				\path[transform canvas={xshift=1.75em}]	(m-3-2) edge[cell] node[right] {$\eta$} (m-4-2)
										(m-2-3) edge[cell] node[right] {$\hat b'$} (m-3-3);
				\path[transform canvas={shift={(1.75em,-1.625em)}}] (m-2-4) edge[cell] node[right] {$\bar l$} (m-3-4);
				\draw				($(m-3-1)!0.5!(m-4-2)$) node[small] {cart};
			\end{tikzpicture} = \mspace{-12mu} \begin{tikzpicture}[textbaseline]
				\matrix(m)[math35, row sep={3.25em,between origins}, column sep={3.75em,between origins}]{M & A & TA & TB \\ M & TM & TA & TB \\ M & TM & T\ps M & T\ps M \\ M & \ps M & T\ps M & T\ps M \\ \ps M & \ps M & \ps M & \ps M \\};
				\path[map]	(m-1-1) edge[barred] node[above] {$D$} (m-1-2)
										(m-1-2) edge[barred] node[above] {$\iota_{A*}$} (m-1-3)
										(m-1-3) edge[barred] node[above] {$Tj_*$} (m-1-4)
										(m-2-1) edge[barred] node[below] {$\iota_{M*}$} (m-2-2)
										(m-2-2) edge[barred] node[above] {$TD$} (m-2-3)
										(m-2-3) edge[barred] node[above] {$Tj_*$} (m-2-4)
														edge[ps] node[right] {$Td$} (m-3-3)
										(m-2-4) edge[ps] node[right] {$Tl$} (m-3-4)
										(m-3-1) edge[barred] node[above] {$\iota_{M*}$} (m-3-2)
										(m-3-2) edge[barred] node[below] {$T\yon_{M*}$} (m-3-3)
										(m-4-1) edge[barred] node[above] {$\yon_{M*}$} (m-4-2)	
														edge[ps] node[left] {$\yon_M$} (m-5-1)
										(m-4-2) edge[barred] node[below] {$\iota_{\ps M*}$} (m-4-3)
										(m-4-3) edge[ps] node[right] {$w$} (m-5-3)
										(m-4-4) edge[ps] node[right] {$w$} (m-5-4);
				\path				(m-1-1) edge[eq] (m-2-1)
										(m-1-3) edge[eq] (m-2-3)
										(m-1-4) edge[eq] (m-2-4)
										(m-2-1) edge[eq] (m-3-1)
										(m-2-2) edge[eq] (m-3-2)
										(m-3-1) edge[eq] (m-4-1)
										(m-3-3) edge[eq] (m-3-4)
														edge[eq, ps] (m-4-3)
										(m-3-4) edge[eq, ps] (m-4-4)
										(m-4-2) edge[eq, ps] (m-5-2)
										(m-4-3) edge[eq] (m-4-4)
										(m-5-1) edge[eq] (m-5-2)
										(m-5-2) edge[eq] (m-5-3)
										(m-5-3) edge[eq] (m-5-4)
										(m-1-2) edge[cell] node[right] {$\iota_{D*}$} (m-2-2)
										(m-3-2) edge[cell] node[right] {$\iso$} (m-4-2);
				\path[transform canvas={xshift=1.875em}]	(m-2-2) edge[cell] node[right] {$T\zeta$} (m-3-2)
										(m-2-3) edge[cell] node[right] {$T\eta$} (m-3-3)
										(m-4-2) edge[cell, transform canvas={yshift=-2pt}] node[right] {$\hat w'$} (m-5-2);
				\draw				($(m-4-1)!0.5!(m-5-2)$) node[small] {cart};
			\end{tikzpicture}
		\end{equation}
		
		We claim that the composites above coincide. To see this, first notice that we may equivalently show that they coincide after precomposition with the composite of the opcartesian cells defining the companions $\iota_{A*}$ and $Tj_*$, which is itself opcartesian as we remarked above. That they do follows from the equation given in \figref{thm1-proof}. Indeed the top-left and middle-left composites shown there equal respectively the precomposition of the left-hand side and right-hand side above with the opcartesian cells defining $\iota_{A*}$ and $Tj_*$ which, in case of the left-hand side, follows from the definitions of $\iota_{A*} \hc Tj_* \iso j_* \hc \iota_{B*}$, $\hat b'$ and $\bar l$. The identities displayed in \figref{thm1-proof} follow from: (i) one of the triangle identities for $\hat b$ and $\theta_b$; (ii) the naturality of the double transformation $\nat\iota{\id_\K}T$; (iii) the identity \eqref{factorisation of cart} and the companion identities for $\iota_{A*}$ and $\iota_{M*}$ (see \lemref{unit cell factorisation}); (iv) the definition of $\iota_{D*}$ and that of $\iota_{M*} \hc T\yon_{M*} \iso \yon_{M*} \hc \iota_{\ps M*}$, the latter in terms of the cartesian cells that define the companions of $\iota_M$, $T\yon_M$, $\yon_M$ and $\iota_{\ps M}$; (v) the identity $\hat w' = \hat w \hc (1_w \of \textup{cart})$ (which follows from the uniqueness of factorisations through the opcartesian cell defining $\iota_{\ps M*}$).
		\begin{figure}
			\includegraphics{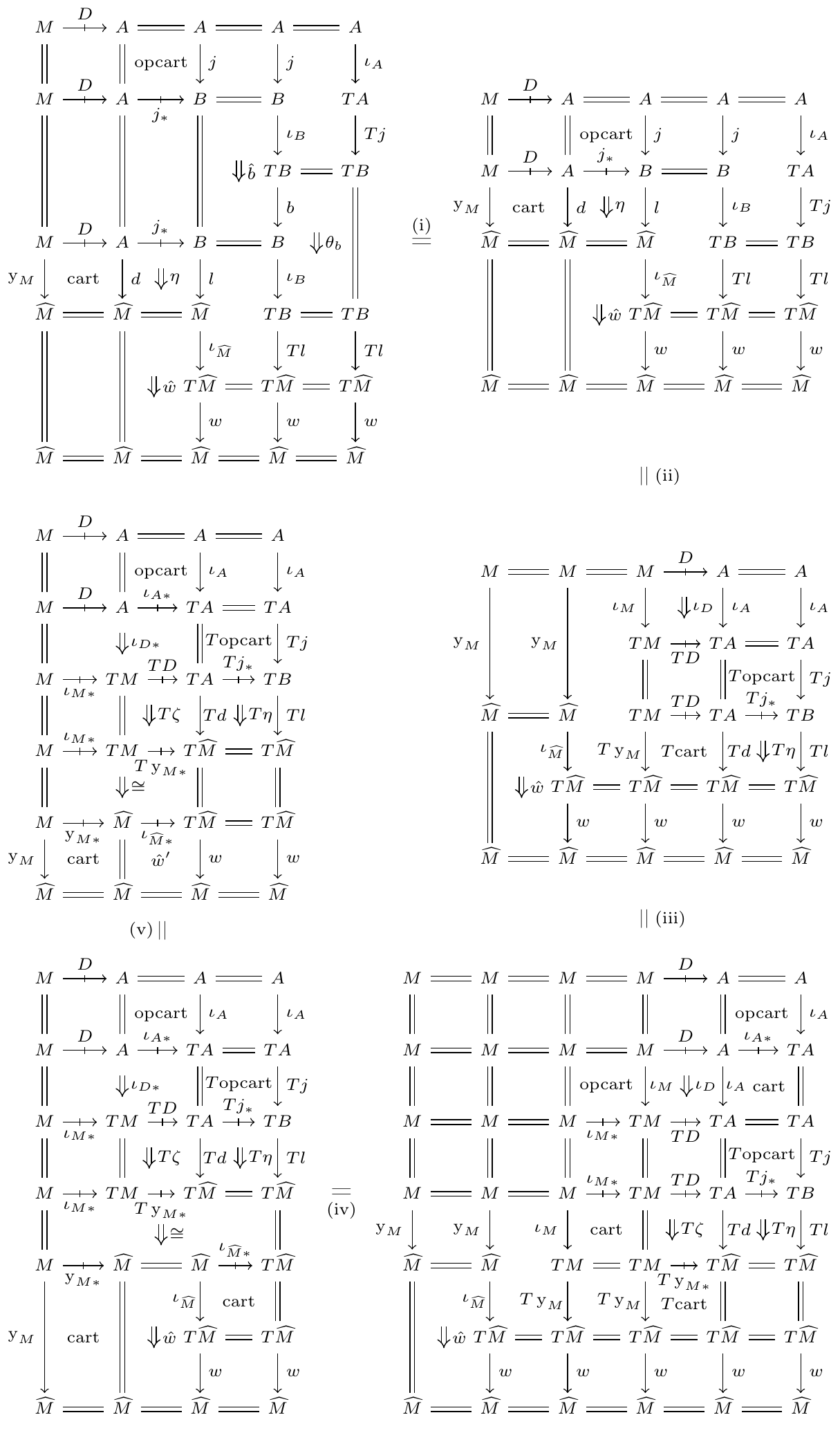}
			\caption{Proof of the identity \eqref{thm1-identity}.}
			\label{thm1-proof}
		\end{figure}		
		
		We denote the composition of the first three columns of the left-hand side of \eqref{thm1-identity} by $\phi$ and that of the bottom three rows of the first two columns of the right-hand side by $\psi$ so that, schematically, the identity above can be redrawn as shown below. We claim that both $\phi$ and $\psi$ are cartesian, so that they define pointwise left Kan extensions by axiom (e) of \defref{yoneda embedding}. To see this first notice that the bottom row of $\phi$ consists of cells defining pointwise left Kan extensions, by axiom (e) and by assumption on $\eta$, so that the full row defines a pointwise left Kan extension by the pasting lemma, and thus is cartesian by \lemref{left Kan extensions along yoneda embeddings}. We conclude that the composites $\phi$ and $\psi$ are build up using cartesian and invertible cells only so that, by \lemref{horizontal composite of cartesian cells} and the pasting lemma, they are cartesian themselves.
		\begin{displaymath}
			\begin{tikzpicture}[baseline=+0.5cm, x=0.75cm, y=0.6cm, font=\scriptsize]
				\draw	(2,0) -- (2,2) -- (0,2) -- (0,0) -- (3,0) -- (3,2) -- (2,2);
				\draw (1,1) node {$\phi$}
							(2.5,1) node {$\bar l$};
			\end{tikzpicture} \mspace{12mu} = \mspace{12mu} \begin{tikzpicture}[baseline=+0.8cm, x=0.75cm, y=0.6cm, font=\scriptsize]
				\draw (0,2) -- (3,2) -- (3,0) -- (0,0) -- (0,3) -- (1.5,3) -- (1.5,0);
				\draw	(0.75,1) node {$\psi$}
							(0.75,2.5) node {$\iota_{D*}$}
							(2.25,1) node {$1_w \of T\eta$};
			\end{tikzpicture}
		\end{displaymath}		
		
		To prove the `whenever'-part, assume that $\iota_D$ satisfies the right Beck-Chevalley condition, that is the top cell $\iota_{D*}$ in the right-hand side above is invertible, and that $1_w \of T\eta$ defines a left Kan extension. Using the pasting lemma we find that the full right-hand side defines a left Kan extension. Since $\phi$ in the left-hand side does too we conclude that $\bar l$ must be invertible, by the uniqueness of Kan extensions.
		
		For the converse assume that $\map jAB$ is full and faithful and that $\bar l$ is invertible. Writing $\eps$ for the opcartesian cell that defines the companion $\hmap{j_*}AB$ of $j$, remember that it is cartesian because $j$ is full and faithful. Using \propref{lax double functors preserve cartesian cells}, it follows that $T\eps$ is cartesian too. The left-hand side above is cartesian because $\bar l$ is invertible so that, when precomposed with $T\eps$, we obtain a composite that is again cartesian. Hence the right-hand side precomposed with $T\eps$, that is \mbox{$(\psi \of \iota_{D*}) \hc (1_w \of T\eta \of T\eps)$}, is cartesian as well. Since $\eta \of \eps$ is invertible by \lemref{pointwise left Kan extension along full and faithful map}, the composite $\psi \hc (1_w \of T\eta \of T\eps)$ in here is cartesian too; we conclude that $\iota_{D*}$ is the factorisation of one cartesian cell through another. Since $\iota_{D*}$ is a horizontal cell it follows that it is invertible. Returning to the identity above, we see that now both the left-hand side as well as the composite $\psi \of \iota_{D*}$ in the right-hand side are cartesian and thus define pointwise left Kan extensions by axiom (e) of \defref{yoneda embedding}. Invoking the pasting lemma for Kan extensions we conclude that $1_w \of T\eta$ defines a pointwise left Kan extension. This completes the proof.
	\end{proof}
	
	\subsection{In the case of a general map \texorpdfstring{$\map dAM$}{d: A → M}}
	The proof of the previous theorem can be modified into one for the following related result, which concerns pointwise left Kan extensions of maps that are of the general form $\map dAM$.
	
	Suppose that the cell $\eta$ on the left below defines $l$ as the pointwise left Kan extension of $d$ along the companion of $j$. Assume that $B$ and $M$ admit pseudo $T$-algebra structures $(b, \bar b, \hat b)$ and $(m, \bar m, \hat m)$, so that $l$ admits a unique structure cell $\cell{\bar l}{l \of b}{m \of Tl}$.
	\begin{displaymath}
			\begin{tikzpicture}[textbaseline]
  			\matrix(m)[math35]{A & B \\ M & M \\};
  			\path[map]  (m-1-1) edge[barred] node[above] {$j_*$} (m-1-2)
														edge node[left] {$d$} (m-2-1)
										(m-1-2) edge node[right] {$l$} (m-2-2);
				\path				(m-2-1) edge[eq] (m-2-2);
				\path[transform canvas={shift={($(m-1-2)!(0,0)!(m-2-2)$)}}] (m-1-1) edge[cell] node[right] {$\eta$} (m-2-1);
			\end{tikzpicture} \qquad\qquad\qquad\qquad\qquad \begin{tikzpicture}[textbaseline]
				\matrix(m)[math35, column sep=1.5em]{TA & TB \\ TM & TM \\ M & M \\};
				\path[map]	(m-1-1) edge[barred] node[above] {$Tj_*$} (m-1-2)
														edge node[left] {$Td$} (m-2-1)
										(m-1-2) edge node[right] {$Tl$} (m-2-2)
										(m-2-1) edge node[left] {$m$} (m-3-1)
										(m-2-2) edge node[right] {$m$} (m-3-2);
				\path				(m-2-1) edge[eq] (m-2-2)
										(m-3-1) edge[eq] (m-3-2);
				\path[transform canvas={shift=(m-2-2)}]	(m-1-1) edge[cell] node[right] {$T\eta$} (m-2-1);
			\end{tikzpicture}
		\end{displaymath}
	\begin{theorem} \label{thm2}
		In the above situation the structure cell $\bar l$ is invertible whenever the cell $\iota_{d^*}$ (where $\hmap{d^*}MA$ is the conjoint of $d$) satisfies the right Beck-Chevalley condition and the composite $1_m \of T\eta$, on the right above, defines $m \of Tl$ as a left Kan extension. The converse holds as soon as $\map jAB$ is full and faithful, and $\iota_{j^*}$ satisfies the right Beck-Chevalley condition.
	\end{theorem}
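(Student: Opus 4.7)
The plan is to mimic the strategy used to prove Theorem \ref{thm1}, making two substitutions throughout: the yoneda embedding $\map{\yon_M}M{\ps M}$ (and its target $\ps M$) is replaced by the identity $\map{\id_M}MM$ (so $w$ becomes $m$ and $\hat w$ becomes $\hat m$), and the restriction $D = \ps M(\yon_M, d)$ is replaced by the conjoint $\hmap{d^*}MA$. Under this translation, the yoneda-specific machinery of the proof of Theorem \ref{thm1}---namely the factorisation $\zeta$ of \eqref{factorisation of cart} and its $T$-image, together with the naturality isomorphism $\iota_{M*} \hc T\yon_{M*} \iso \yon_{M*} \hc \iota_{\ps M*}$---disappears, making the core manipulation in fact somewhat shorter.

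First I would establish an identity parallel to \eqref{thm1-identity}: on one side, the composite of the opcartesian iso $\iota_{A*} \hc Tj_* \iso j_* \hc \iota_{B*}$, the cartesian factorisation $\hat b'$ of the unitor $\hat b$, the cell $\eta$ whiskered on the left by $d^*$, and the structure cell $\bar l$; on the other side, the Beck-Chevalley cell $\iota_{d^**}$ on top, followed by the cell $1_m \of T\eta$ and the cartesian factorisation $\hat m'$ of $\hat m$. The verification proceeds via the same tools as for \eqref{thm1-identity}: one of the triangle identities for $\hat b$ and the counit $\theta_b$ of the adjunction $\iota_B \ladj b$, the naturality of $\nat\iota{\id_\K}T$, the companion identities of \lemref{unit cell factorisation}, and the definitions of $\bar l$ (from \eqref{colax algebra structure}) and of $\iota_{d^**}$. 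Schematising this identity as $\phi \hc \bar l = (\psi \of \iota_{d^**}) \hc (1_m \of T\eta)$, with $\phi$ collecting the iso, $\hat b'$, and $\eta$, and $\psi$ essentially equal to $\hat m'$, one checks that both $\phi$ and $\psi$ define pointwise left Kan extensions. For $\phi$ this uses that $\eta$ does by assumption, that $\hat b'$ does by combining \lemref{unit of adjunction as cartesian cell} with \lemref{conjoints define pointwise left Kan extensions} (since it defines $\iota_{B*}$ as the conjoint of $b$), and that the iso preserves the relevant structure; for $\psi$ the argument is analogous using $\hat m'$ and the conjoint of $m$. The ``whenever'' direction then closes exactly as in Theorem \ref{thm1}: under the assumptions that $\iota_{d^**}$ is invertible and $1_m \of T\eta$ defines a left Kan extension, both sides define left Kan extensions, so by uniqueness $\bar l$ is invertible.

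For the converse, assuming $\bar l$ invertible and $j$ full and faithful, the opcartesian cell $\eps$ defining $j_*$ is cartesian, so $T\eps$ is cartesian by \propref{lax double functors preserve cartesian cells}, and $\eta \of \eps$ is invertible by \lemref{pointwise left Kan extension along full and faithful map}. Precomposing the identity with $T\eps$ renders the left-hand side cartesian, hence also the right-hand side; since $T\eta \of T\eps = T(\eta \of \eps)$ is invertible, \lemref{horizontal composite of cartesian cells} and the pasting lemma sandwich $\iota_{d^**}$ between cartesian cells, forcing its invertibility and hence the Beck-Chevalley condition for $d^*$. To then conclude that $1_m \of T\eta$ defines a pointwise left Kan extension, we return to the (non-precomposed) identity: its left-hand side defines a pointwise left Kan extension (as $\bar l$ is now invertible and $\phi$ does so), hence so does its right-hand side, and the pasting lemma for pointwise Kan extensions extracts the desired property from $\psi \hc (1_m \of T\eta)$. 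The hypothesis that $\iota_{j^*}$ satisfies the right Beck-Chevalley condition enters at this final extraction step, replacing the role that axiom (e) of \defref{yoneda embedding} played in Theorem \ref{thm1}: it guarantees that the relevant piece of the right-hand side is nicely positioned as a pointwise left Kan extension for the pasting lemma to apply. The main obstacle is the careful verification of the key identity and, in the converse, pinpointing where $\iota_{j^*}$ Beck-Chevalley substitutes for the yoneda axiom in finishing the pasting argument.
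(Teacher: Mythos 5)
The forward (``whenever'') half of your proposal matches the paper: the same key identity $\phi \hc \bar l = (\psi \of (\iota_{d^*})_*) \hc (1_m \of T\eta)$ is set up, and your argument that $\phi$ and $\psi$ define pointwise left Kan extensions---the cartesianness of $\hat b'$ and $\hat m'$ (via \lemref{unit of adjunction as cartesian cell}) feeding into \lemref{conjoints define pointwise left Kan extensions}, followed by the pasting lemma---is exactly the paper's replacement for the yoneda machinery of \thmref{thm1}. The conclusion by uniqueness of Kan extensions is likewise identical.

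The converse, however, has a genuine gap. You transplant the cartesianness argument from \thmref{thm1}: precompose with $T\eps$, observe that both sides become cartesian, and ``sandwich'' $(\iota_{d^*})_*$ between cartesian cells. In \thmref{thm1} this works because the codomain is a presheaf object, so axiom (e) of \defref{yoneda embedding} and \lemref{left Kan extensions along yoneda embeddings} force $\eta$, and hence the whole left-hand side, to be \emph{cartesian}. In the present general setting $M$ carries no yoneda structure: $\eta$ merely defines a pointwise left Kan extension and need not be cartesian (nor need $1_l \of \hat b'$ be, unless $l$ is full and faithful), so the left-hand side is not cartesian even when $\bar l$ is invertible and the sandwich argument collapses. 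This is also why your account of where the hypothesis on $\iota_{j^*}$ enters stays vague---if your scheme worked it would not be needed at all, exactly as it is not needed in \thmref{thm1}. The paper's actual route to the invertibility of $(\iota_{d^*})_*$ is different and is where $\iota_{j^*}$ does real work: since $\bar l$ is invertible, \propref{right Beck-Chevalley condition for colax T-morphisms} gives the right Beck-Chevalley condition for $\iota_{l^*}$; combined with that for $\iota_{j^*}$, \lemref{right Beck-Chevalley condition for restrictions} gives it for $\iota_{l^*(\id,j)}$; and full-and-faithfulness of $j$ gives $d \iso l \of j$ by \lemref{pointwise left Kan extension along full and faithful map}, hence $d^* \iso l^*(\id, j)$ and the condition for $\iota_{d^*}$. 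Only after $(\iota_{d^*})_*$ is known to be invertible can one return to the identity and use the pasting lemma to conclude that $1_m \of T\eta$ defines a (pointwise) left Kan extension.
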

	If $d$ is of the form $\map dA{\ps M}$ then the hypothesis above is stronger than that of \thmref{thm1}: the right Beck-Chevalley condition for $\iota_{d^*}$ implies that for $\iota_D$, where $D = \ps M(\yon_M, d) \iso \yon_{M*} \hc d^*$ as before; this follows from \lemref{right Beck-Chevalley condition for restrictions} and \propref{right Beck-Chevalley condition for companions}.
	\begin{proof}
		We claim that the following identity holds, where $\hat b'$ and $\hat m'$ are the factorisations of the units $\hat b$ and $\hat m$ through the opcartesian cells defining $\iota_{B*}$ and $\iota_{M*}$ respectively. The proof we leave to the interested reader; it is similar to that of \eqref{thm1-identity}.
		\begin{displaymath}
			\begin{tikzpicture}[textbaseline]
				\matrix(m)[math35]{M & A & TA & TB & TB \\ M & A & B & TB & TB \\ M & A & B & B & TM \\ M & M & M & M & M \\};
				\path[map]	(m-1-1) edge[barred] node[above] {$d^*$} (m-1-2)
										(m-1-2) edge[barred] node[above] {$\iota_{A*}$} (m-1-3)
										(m-1-3) edge[barred] node[above] {$Tj_*$} (m-1-4)
										(m-2-1) edge[barred] node[above] {$d^*$} (m-2-2)
										(m-2-2) edge[barred] node[above] {$j_*$} (m-2-3)
										(m-2-3) edge[barred] node[above] {$\iota_{B*}$} (m-2-4)
										(m-2-4) edge node[right] {$b$} (m-3-4)
										(m-2-5) edge node[right] {$Tl$} (m-3-5)
										(m-3-1) edge[barred] node[above] {$d^*$} (m-3-2)
										(m-3-2) edge[barred] node[above] {$j_*$} (m-3-3)
														edge node[right] {$d$} (m-4-2)
										(m-3-3) edge node[right] {$l$} (m-4-3)
										(m-3-4) edge node[right] {$l$} (m-4-4)
										(m-3-5) edge node[right] {$m$} (m-4-5);
				\path				(m-1-1) edge[eq] (m-2-1)
										(m-1-2) edge[eq] (m-2-2)
										(m-1-4) edge[eq] (m-1-5)
														edge[eq] (m-2-4)
										(m-1-5) edge[eq] (m-2-5)
										(m-2-1) edge[eq] (m-3-1)
										(m-2-2) edge[eq] (m-3-2)
										(m-2-3) edge[eq] (m-3-3)
										(m-2-4) edge[eq] (m-2-5)
										(m-3-1) edge[eq] (m-4-1)
										(m-3-3) edge[eq] (m-3-4)
										(m-4-1) edge[eq] (m-4-2)
										(m-4-2) edge[eq] (m-4-3)
										(m-4-3) edge[eq] (m-4-4)
										(m-4-4) edge[eq] (m-4-5)
										(m-1-3) edge[cell] node[right] {$\iso$} (m-2-3);
				\path[transform canvas={xshift=1.75em}]	(m-3-2) edge[cell] node[right] {$\eta$} (m-4-2)
										(m-2-3) edge[cell] node[right] {$\hat b'$} (m-3-3);
				\path[transform canvas={shift={(1.75em,-1.625em)}}] (m-2-4) edge[cell] node[right] {$\bar l$} (m-3-4);
				\draw				($(m-3-1)!0.5!(m-4-2)$) node[small] {cart};
			\end{tikzpicture} = \begin{tikzpicture}[textbaseline]
				\matrix(m)[math35]{M & A & TA & TB \\ M & TM & TA & TB \\ M & TM & TM & TM \\ M & M & M & M \\};
				\path[map]	(m-1-1) edge[barred] node[above] {$d^*$} (m-1-2)
										(m-1-2) edge[barred] node[above] {$\iota_{A*}$} (m-1-3)
										(m-1-3) edge[barred] node[above] {$Tj_*$} (m-1-4)
										(m-2-1) edge[barred] node[below] {$\iota_{M*}$} (m-2-2)
										(m-2-2) edge[barred] node[below] {$Td^*$} (m-2-3)
										(m-2-3) edge[barred] node[above] {$Tj_*$} (m-2-4)
														edge node[right] {$Td$} (m-3-3)
										(m-2-4) edge node[right] {$Tl$} (m-3-4)
										(m-3-1) edge[barred] node[above] {$\iota_{M*}$} (m-3-2)
										(m-3-2) edge node[right] {$m$} (m-4-2)
										(m-3-3) edge node[right] {$m$} (m-4-3)
										(m-3-4) edge node[right] {$m$} (m-4-4);
				\path				(m-1-1) edge[eq] (m-2-1)
										(m-1-3) edge[eq] (m-2-3)
										(m-1-4) edge[eq] (m-2-4)
										(m-2-1) edge[eq] (m-3-1)
										(m-2-2) edge[eq] (m-3-2)
										(m-3-1) edge[eq] (m-4-1)
										(m-3-2) edge[eq] (m-3-3)
										(m-3-3)	edge[eq] (m-3-4)
										(m-4-1) edge[eq] (m-4-2)
										(m-4-2) edge[eq] (m-4-3)
										(m-4-3) edge[eq] (m-4-4)
										(m-1-2) edge[cell] node[right] {$(\iota_{d^*})_*$} (m-2-2);
				\path[transform canvas={xshift=1.75em}]	(m-2-3) edge[cell] node[right] {$T\eta$} (m-3-3)
										(m-3-1) edge[cell] node[right] {$\hat m'$} (m-4-1);
				\draw				($(m-2-2)!0.5!(m-3-3)$) node[small] {$T$cart};
			\end{tikzpicture}
		\end{displaymath}
		
		Analogous to the proof of \thmref{thm1} we denote the composition of the first three columns of the left-hand side above by $\phi$ and that of the bottom two rows of the first two columns of the right-hand side by $\psi$ so that, schematically, the identity above can be redrawn as shown below. We claim that both $\phi$ and $\psi$ define pointwise left Kan extensions. Indeed, both $\hat b'$ and $\hat m'$ are cartesian by \lemref{cartesian and opcartesian cells in terms of companions and conjoints} so that, by \lemref{conjoints define pointwise left Kan extensions}, each of the columns in the composites $\phi$ and $\psi$ defines a pointwise left Kan extension. Hence the claim follows from the pasting lemma.
		\begin{displaymath}
			\begin{tikzpicture}[baseline=+0.5cm, x=0.75cm, y=0.6cm, font=\scriptsize]
				\draw	(2,0) -- (2,2) -- (0,2) -- (0,0) -- (3,0) -- (3,2) -- (2,2);
				\draw (1,1) node {$\phi$}
							(2.5,1) node {$\bar l$};
			\end{tikzpicture} \mspace{12mu} = \mspace{12mu} \begin{tikzpicture}[baseline=+0.8cm, x=0.75cm, y=0.6cm, font=\scriptsize]
				\draw (0,2) -- (3,2) -- (3,0) -- (0,0) -- (0,3) -- (1.5,3) -- (1.5,0);
				\draw	(0.75,1) node {$\psi$}
							(0.75,2.5) node {$(\iota_{d^*})_*$}
							(2.25,1) node {$1_m \of T\eta$};
			\end{tikzpicture}
		\end{displaymath}
		
		The `whenever'-part of the theorem is now easily proved: if $\iota_{d^*}$ satisfies the right Beck-Chevalley condition, that is $(\iota_{d^*})_*$ is invertible, and $1_m \of T\eta$ defines $m \of Tl$ as a left Kan extension, then the full right-hand side above defines a left Kan extension by the pasting lemma. Since $\phi$ in the left-hand side defines a pointwise left Kan extension too we conclude that $\bar l$ must be invertible, by the uniqueness of Kan extensions.
		
		For the converse assume that $\bar l$ is invertible, that $\map jAB$ is full and faithful, and that $\iota_{j^*}$ satisfies the right Beck-Chevalley condition. It follows from \propref{right Beck-Chevalley condition for colax T-morphisms} that $\iota_{l^*}$ satisfies the right Beck-Chevalley condition so that, by \lemref{right Beck-Chevalley condition for restrictions}, the restriction $l^*(\id, j)$ does too. Now the fact that $j$ is full and faithful means that $d \iso l \of j$ by \lemref{pointwise left Kan extension along full and faithful map} and we conclude that $\iota_{d^*}$ satisfies the right Beck-Chevalley condition as well, by applying the naturality of the double transformation \mbox{$\nat\iota{\id_\K}T$} to the isomorphisms $l^*(\id, j) \iso (l \of j)^* \iso d^*$. Returning to the identity above we see that, now that both $\bar l$ and $(\iota_{d^*})_*$ are invertible, both the left-hand side and the composite $\psi \of (\iota_{d^*})_*$ in the right-hand side define pointwise left Kan extensions so that, by the pasting lemma, the composite $1_m \of T\eta$ defines a pointwise left Kan extension as well. This completes the proof.
	\end{proof}
	
	\subsection{In the case that \texorpdfstring{$A$}{A} admits a pseudo \texorpdfstring{$T$}{T}-algebra structure}
	Finally we consider the `best' situation, in which $A$ admits a pseudo $T$-algebra structure as well. Remember that in this case $\iota_{d^*}$ satisfies the right Beck-Chevalley condition precisely if the unique structure cell $\cell{\bar d}{d \of a}{m \of Td}$, that makes $\map dAM$ into a colax $T$-morphism, is invertible; see \propref{right Beck-Chevalley condition for colax T-morphisms}.
	
	As before, we consider a cell $\eta$ as on the left below and assume that it defines $l$ as the pointwise left Kan extension of $d$ along the companion of $j$. Next we suppose that each of $A$, $B$ and $M$ admit pseudo $T$-algebra structures $(a, \bar a, \hat a)$, $(b, \bar b, \hat b)$ and $(m, \bar m, \hat m)$ respectively, so that the maps $d$, $j$ and $l$ admit unique structure cells $\bar d$, $\bar j$ and $\bar l$, making them into colax $T$-morphisms.
	\begin{displaymath}
		\begin{tikzpicture}[textbaseline]
  			\matrix(m)[math35]{A & B \\ M & M \\};
  			\path[map]  (m-1-1) edge[barred] node[above] {$j_*$} (m-1-2)
														edge node[left] {$d$} (m-2-1)
										(m-1-2) edge node[right] {$l$} (m-2-2);
				\path				(m-2-1) edge[eq] (m-2-2);
				\path[transform canvas={shift={($(m-1-2)!(0,0)!(m-2-2)$)}}] (m-1-1) edge[cell] node[right] {$\eta$} (m-2-1);
			\end{tikzpicture} \qquad\qquad\qquad\qquad\qquad \begin{tikzpicture}[textbaseline]
				\matrix(m)[math35, column sep=1.5em]{TA & TB \\ TM & TM \\ M & M \\};
				\path[map]	(m-1-1) edge[barred] node[above] {$Tj_*$} (m-1-2)
														edge node[left] {$Td$} (m-2-1)
										(m-1-2) edge node[right] {$Tl$} (m-2-2)
										(m-2-1) edge node[left] {$m$} (m-3-1)
										(m-2-2) edge node[right] {$m$} (m-3-2);
				\path				(m-2-1) edge[eq] (m-2-2)
										(m-3-1) edge[eq] (m-3-2);
				\path[transform canvas={shift=(m-2-2)}]	(m-1-1) edge[cell] node[right] {$T\eta$} (m-2-1);
			\end{tikzpicture}
		\end{displaymath}
	\begin{theorem}\label{thm3}
		In the above situation assume that the structure cell $\bar d$ is invertible. The structure cell $\bar l$ is invertible if and only if the composite $1_m \of T\eta$ on the right above defines $m \of Tl$ as a left Kan extension.
	\end{theorem}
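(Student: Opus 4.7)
The plan is to deduce this theorem from \thmref{thm2}, using the observation that the hypothesis ``$\bar d$ invertible'' is a clean way to encode the right Beck-Chevalley condition on $\iota_{d^*}$. Indeed, by \propref{right Beck-Chevalley condition for colax T-morphisms}, $\bar d$ being invertible is equivalent to $\iota_{d^*}$ satisfying the right Beck-Chevalley condition. Consequently the ``if''-direction of the theorem follows immediately from the ``whenever''-part of \thmref{thm2}, since both of its hypotheses are now in place.

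For the ``only if''-direction I would revisit the key identity employed in the proof of \thmref{thm2}, which schematically reads
\begin{displaymath}
	\phi \hc \bar l \;=\; \bigl(\psi \of (\iota_{d^*})_*\bigr) \hc \bigl(1_m \of T\eta\bigr),
\end{displaymath}
where, as established in that proof, both $\phi$ and $\psi$ define pointwise left Kan extensions. Suppose that $\bar l$ is invertible. Then the left-hand side, being $\phi$ horizontally composed with an invertible cell, still defines a pointwise left Kan extension; likewise the composite $\psi \of (\iota_{d^*})_*$ in the right-hand side does, since $(\iota_{d^*})_*$ is invertible by the observation above. Applying the pasting lemma for pointwise left Kan extensions to the right-hand side then yields that $1_m \of T\eta$ must itself define $m \of Tl$ as a (pointwise) left Kan extension, as required.

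In effect this reproduces the converse argument of \thmref{thm2}, simplified: there the Beck-Chevalley condition on $\iota_{d^*}$ had to be derived from $j$ being full and faithful together with $\iota_{j^*}$ satisfying Beck-Chevalley, by invoking the chain $l^*(\id, j) \iso (l \of j)^* \iso d^*$, whereas here it is handed to us directly by the hypothesis on $\bar d$. I do not anticipate any genuine obstacle: the nontrivial content---namely the identity above and the pointwiseness of $\phi$ and $\psi$---has already been established in the proof of \thmref{thm2}, so both directions of the present theorem reduce to a short application of \propref{right Beck-Chevalley condition for colax T-morphisms} and the pasting lemma.
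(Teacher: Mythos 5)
Your proof is correct, but it takes a genuinely different route from the one in the paper. You reduce \thmref{thm3} to \thmref{thm2}: the ``if''-direction is a formal consequence of its ``whenever''-part once \propref{right Beck-Chevalley condition for colax T-morphisms} translates the invertibility of $\bar d$ into the right Beck-Chevalley condition for $\iota_{d^*}$ (an equivalence the paper itself records just before the theorem), and the ``only if''-direction re-enters the proof of \thmref{thm2}, observing that its converse used the hypotheses on $j$ and $\iota_{j^*}$ only to establish that very condition on $\iota_{d^*}$, after which the identity $\phi \hc \bar l = \bigl(\psi \of (\iota_{d^*})_*\bigr) \hc (1_m \of T\eta)$ and the pasting lemma finish the job. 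The paper instead gives a self-standing argument: it constructs a new identity \eqref{thm3-identity} built from the structure cell $\bar{j_*}$ on the companion of $j$, proves that $\bar{j_*}$ satisfies the \emph{left} Beck-Chevalley condition (indeed coincides with the canonical isomorphism $\iota_{A*} \hc Tj_* \iso j_* \hc \iota_{B*}$), and invokes Corollary 4.6 of \cite{Koudenburg14a} to conclude that $\eta \of \bar{j_*}$ defines $l \of b$ as a pointwise left Kan extension of $d \of a$ along $Tj_*$; both directions then follow from uniqueness of Kan extensions. Your version is shorter and avoids the left Beck-Chevalley machinery and the external corollary, at the price of depending on the internals of \thmref{thm2}'s proof (the displayed identity and the pointwiseness of $\phi$ and $\psi$) rather than only on its statement; the paper's version exploits the algebra structure on $A$ to work entirely over companions, which keeps the argument independent of the $d^*$-based identity and exhibits the left Beck-Chevalley condition as a tool in its own right. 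Both arguments are sound.
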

	\begin{proof}
		We consider the two composites in the identity below where, in the left-hand side, the cell $\bar{j_*}$ is taken to be the factorisation of $\bar j$ that is defined by the identity on the left of \eqref{companion algebra structure} below.
		\begin{equation} \label{thm3-identity}
			\begin{tikzpicture}[textbaseline]
  			\matrix(m)[math35, column sep={3.75em,between origins}]{TA & TB & TB \\ A & B & TM \\ M & M & M \\};
  			\path[map]	(m-1-1) edge[barred] node[above] {$Tj_*$} (m-1-2)
  													edge node[left] {$a$} (m-2-1)
  									(m-1-2) edge node[right] {$b$} (m-2-2)
  									(m-1-3) edge node[right] {$Tl$} (m-2-3)
  									(m-2-1) edge[barred] node[below] {$j_*$} (m-2-2)
  													edge node[left] {$d$} (m-3-1)
  									(m-2-2) edge node[right] {$l$} (m-3-2)
  									(m-2-3) edge node[right] {$m$} (m-3-3);
  			\path				(m-1-2) edge[eq] (m-1-3)
  									(m-3-1) edge[eq] (m-3-2)
  									(m-3-2) edge[eq] (m-3-3);
  			\path[transform canvas={xshift=1.875em}]	(m-1-1) edge[cell] node[right] {$\bar{j_*}$} (m-2-1)
  									(m-2-1) edge[cell, transform canvas={yshift=-3pt}] node[right] {$\eta$} (m-3-1);
  			\path[transform canvas={shift={(1.875em, -1.625em)}}]	(m-1-2) edge[cell] node[right] {$\bar l$} (m-2-2);
  		\end{tikzpicture} = \begin{tikzpicture}[textbaseline]
  			\matrix(m)[math35, column sep={3.75em,between origins}]{TA & TA & TB \\ A & TM & TM \\ M & M & M \\};
  			\path[map]	(m-1-1) edge node[left] {$a$} (m-2-1)
  									(m-1-2) edge[barred] node[above] {$Tj_*$} (m-1-3)
  													edge node[left] {$Td$} (m-2-2)
  									(m-1-3) edge node[right] {$Tl$} (m-2-3)
  									(m-2-1) edge node[left] {$d$} (m-3-1)
  									(m-2-2) edge node[left] {$m$} (m-3-2)
  									(m-2-3) edge node[right] {$m$} (m-3-3);
  			\path				(m-1-1) edge[eq] (m-1-2)
  									(m-2-2) edge[eq] (m-2-3)
  									(m-3-1) edge[eq] (m-3-2)
  									(m-3-2) edge[eq] (m-3-3);
  			\path[transform canvas={xshift=1.875em}]	(m-1-2) edge[cell] node[right] {$T\eta$} (m-2-2);
  			\path[transform canvas={shift={(1.375em, -1.625em)}}]	(m-1-1) edge[cell] node[right] {$\bar d$} (m-2-1);
  		\end{tikzpicture}
  	\end{equation}
  	To see that the identity holds, note that its two sides coincide after precomposing them with the $T$-image of the opcartesian cell defining the companion of $j$, as is shown in \figref{thm3-proof}, so that the identity follows by uniqueness of factorisations through opcartesian cells. Indeed, both the left-hand and right-hand side of \figref{thm3-proof} coincide with those above after this precomposition, as follows from the companion identity for $j_*$ and the definition of $\bar{j_*}$, and by writing out the structure cells $\bar l$, $\bar j$ and $\bar d$ (as in \eqref{colax algebra structure}). The identities of \figref{thm3-proof} follow from one of the triangle identities for $\hat b$ and $\theta_b$, that define the adjunction $\iota_B \ladj b$, and the naturality of $\iota$ respectively.
  	\begin{equation} \label{companion algebra structure}
  		\bar j  = \mspace{-9mu}\begin{tikzpicture}[textbaseline]
  			\matrix(m)[math35]{TA & TA\\ TA & TB\\ A & B\\ B & B\\};
  			\path[map]	(m-1-2) edge node[right] {$Tj$} (m-2-2)
  									(m-2-1) edge[barred] node[below] {$Tj_*$} (m-2-2)
  													edge node[left] {$a$} (m-3-1)
  									(m-2-2) edge node[right] {$b$} (m-3-2)
  									(m-3-1) edge[barred] node[below] {$j_*$} (m-3-2)
  													edge node[left] {$j$} (m-4-1);
  			\path				(m-1-1) edge[eq] (m-1-2)
  													edge[eq] (m-2-1)
  									(m-3-2) edge[eq] (m-4-2)
  									(m-4-1) edge[eq] (m-4-2);
  			\path[transform canvas={xshift=1.75em, yshift=-3pt}]	(m-2-1) edge[cell] node[right] {$\bar{j_*}$} (m-3-1);
  			\path[transform canvas={shift={(1.75em, -1.625em)}}]	(m-1-1) node[small] {$T$opcart}
  									(m-3-1) node[small] {cart};
  		\end{tikzpicture} \qquad\qquad (\bar{j_*})^* = \begin{tikzpicture}[textbaseline]
	  		\matrix(m)[math35]{A & TA & TB & TB \\ A & A & B & TB \\};
	  		\path[map]	(m-1-1)	edge[barred] node[above] {$\iota_{A*}$} (m-1-2)
	  								(m-1-2) edge[barred] node[above] {$Tj_*$} (m-1-3)
	  												edge node[right] {$a$} (m-2-2)
	  								(m-1-3) edge node[right] {$b$} (m-2-3)
	  								(m-2-2) edge[barred] node[below] {$j_*$} (m-2-3)
	  								(m-2-3) edge[barred] node[below] {$\iota_{B*}$} (m-2-4);
	  		\path				(m-1-1) edge[eq] (m-2-1)
	  								(m-1-3)	edge[eq] (m-1-4)
	  								(m-1-4) edge[eq] (m-2-4)
	  								(m-2-1) edge[eq] (m-2-2);
	  		\path[transform canvas={xshift=1.75em}]	(m-1-1) edge[cell] node[right] {$\hat a'$} (m-2-1)
	  								(m-1-2) edge[cell] node[right] {$\bar{j_*}$} (m-2-2)
	  								(m-1-3) edge[cell] node[right] {$\theta_b'$} (m-2-3);
	  	\end{tikzpicture}
  	\end{equation}
  	  		
  	Next we claim that $\bar{j_*}$ satisfies the \emph{left Beck-Chevalley condition}, that is the composite $(\bar{j_*})^*$ on the right above is invertible. Here $\hat a'$ denotes the factorisation of $\hat a$ through the opcartesian cell defining $\iota_{A*}$, while $\theta_b'$ denotes the factorisation of $\theta_b$ through the cartesian cell defining $\iota_{B*}$. Recall that $\hat a'$ is cartesian  by \lemref{unit of adjunction as cartesian cell} while $\theta_b'$ is opcartesian by its horizontal and vertical dual; now compare $(\bar{j_*})^*$ with the composite $\iota_{J*}$ of \eqref{right Beck-Chevalley composite}, that was considered in the definition of the right Beck-Chevalley condition on $\iota_J$. To prove the claim simply precompose $(\bar{j_*})^*$ with the composite of the opcartesian cells defining $\iota_{A*}$ and $Tj_*$, and postcompose it with the composite of the cartesian cells defining $j_*$ and $\iota_{B*}$; remember that these composites are (op\ndash )cartesian by the discussion following \eqref{companion pseudofunctor}. The resulting composite is simply the identity $\iota_B \of j = Tj \of \iota_A$, as is easily checked by using the triangle identities for $(\hat a, \theta_a)$ and $(\hat b, \theta_b)$. Using the uniqueness of factorisations through (op-)cartesian cells we conclude that $(\bar{j_*})^*$ coincides with the canonical isomorphism $\iota_{A*} \hc Tj_* \iso j_* \hc \iota_{B*}$.
  	
  	As a consequence of the left Beck-Chevalley condition for $(\bar{j_*})^*$ the composite $\eta \of \bar{j_*}$, in the left-hand side of \eqref{thm3-identity}, defines $l \of b$ as the pointwise left Kan extension of $d \of a$ along $Tj_*$; see (the horizontal dual of) Corollary 4.6 of \cite{Koudenburg14a}. The proof is now easy to complete: if $\bar d$ is invertible and $1_m \of T\eta$ defines a left Kan extension then both $\eta \of \bar{j_*}$ in the left-hand side as well as the full right-hand side of \eqref{thm3-identity} define left Kan extensions, so that $\bar l$ must be invertible by the uniqueness of Kan extensions. Conversely, assume that $\bar d$ and $\bar l$ are invertible. Then the left-hand side of \eqref{thm3-identity} defines $m \of Tl$ as a pointwise left Kan extension so that, by precomposing both sides with the inverse of $\bar d$, we see that $1_m \of T\eta$ defines $m \of Tl$ as a pointwise left Kan extension too.
  	\begin{figure}[t]
			\includegraphics{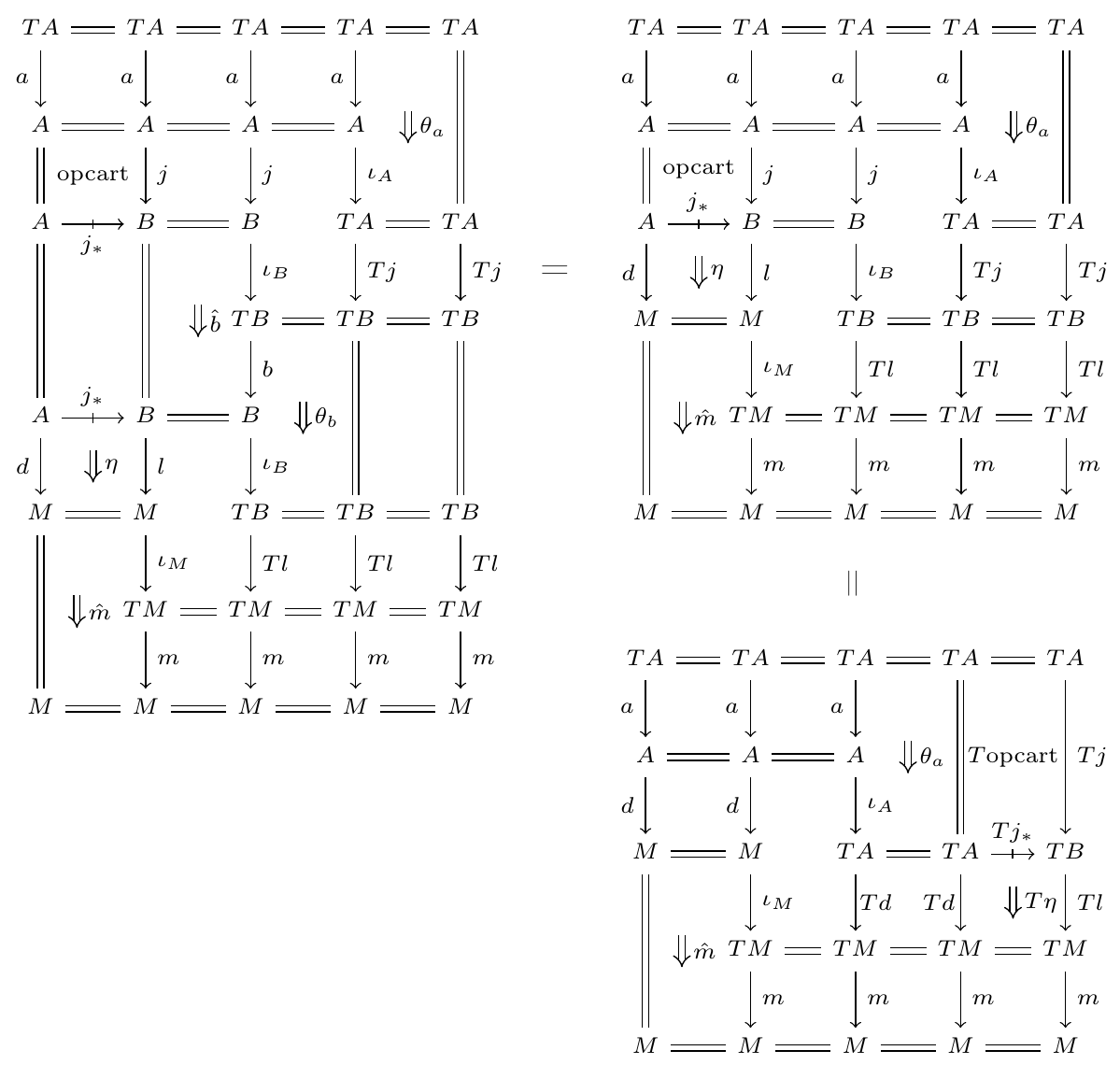}
			\caption{Proof of the identity \eqref{thm3-identity}.}
			\label{thm3-proof}
		\end{figure}
	\end{proof}
	\appendix
	\section{Algebra structures on \texorpdfstring{$\ps A$}{Â}}
	In this appendix we consider algebra structures on presheaf objects $\ps A$, that are defined by a normal lax double monad that restricts to a colax-idempotent $2$-monad. This generalises the classic situation of categories of presheaves $\ps A = \brks{\op A, \Set}$ admitting finite products that are defined pointwise (see \exref{pointwise finite products of presheaves}).
	
	Let $T = (T, \mu, \iota)$ be a normal lax double monad on an equipment $\K$ and assume that it restricts to a colax-idempotent $2$-monad on $V(\K)$; let $\map{\yon_A}A{\ps A}$ be a yoneda embedding in $\K$. By axiom (c) of \defref{yoneda embedding} there exists a cartesian cell
	\begin{displaymath}
		\begin{tikzpicture}
			\matrix(m)[math35]{A & \ps A & T\ps A \\ \ps A & & \ps A, \\};
			\path[map]	(m-1-1) edge[barred] node[above] {$\yon_{A*}$} (m-1-2)
													edge[ps] node[left] {$\yon_A$} (m-2-1)
									(m-1-2) edge[barred] node[above] {$\iota_{\ps A*}$} (m-1-3)
									(m-1-3) edge[ps] node[right] {$w$} (m-2-3);
			\path				(m-2-1) edge[eq] (m-2-3);
			\draw[transform canvas={yshift=-1.625em}]	(m-1-2) node[small] {cart};
		\end{tikzpicture}
	\end{displaymath}
	whose vertical target $\map w{T\ps A}{\ps A}$ is unique up to isomorphism by its axiom (e).
	\begin{proposition} \label{colax-idempotent algebra structure on presheaves object}
		In the above situation $w$ can be extended to a lax $T$-algebra structure $(w, \bar w, \hat w)$ on $\ps A$ whose associator $\cell{\bar w}{w \of Tw}{w \of \mu_{\ps A}}$ is invertible.  Its unitor $\cell{\hat w}{\id_{\ps A}}{w \of \iota_{\ps A}}$ is invertible precisely if $\map{\iota_{\ps A}}{\ps A}{T\ps A}$ is full and faithful.
	\end{proposition}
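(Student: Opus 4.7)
The plan is to exhibit $w$ as a left Kan extension of $\id_{\ps A}$ along $\iota_{\ps A}$ in $V(\K)$, which provides an adjunction $\iota_{\ps A} \ladj w$, and then to invoke the standard theory of colax-idempotent $2$-monads to upgrade this adjunction to the claimed lax $T$-algebra structure.

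First, denote the given cartesian cell by $\chi$ and let $\alpha$ denote the cartesian cell defining $\yon_{A*}$ as the companion of $\yon_A$. By condition (a) of \lemref{equivalent axioms}, $\alpha$ defines $\id_{\ps A}$ as the pointwise left Kan extension of $\yon_A$ along $\yon_{A*}$. Applying the universal property of $\alpha$ as a pointwise left Kan extension to $\chi$ yields a unique cell $\psi$, with horizontal source $\iota_{\ps A*}$, horizontal target $1_{\ps A}$, left vertical $\id_{\ps A}$ and right vertical $w$, such that $\chi = \alpha \hc \psi$. Since $\chi$ defines $w$ as the pointwise left Kan extension of $\yon_A$ along $\yon_{A*} \hc \iota_{\ps A*}$ by axiom (e) of \defref{yoneda embedding}, the pasting lemma for pointwise left Kan extensions forces $\psi$ to define $w$ as the pointwise left Kan extension of $\id_{\ps A}$ along $\iota_{\ps A*}$. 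Then \lemref{Kan extensions along companions} translates this to a vertical cell $\hat w \colon \id_{\ps A} \Rightarrow w \of \iota_{\ps A}$ (the factorisation of $\psi$ through the opcartesian cell defining $\iota_{\ps A*}$) that exhibits $w$ as the left Kan extension of $\id_{\ps A}$ along $\iota_{\ps A}$ in $V(\K)$; such a Kan extension is equivalently an adjunction $\iota_{\ps A} \ladj w$ whose unit is $\hat w$.

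Second, I would invoke the standard fact from the theory of colax-idempotent $2$-monads, see \cite{Kelly-Lack97}, that any adjunction $\iota_X \ladj m$ in the ambient $2$-category extends uniquely to a normal lax $T$-algebra structure $(m, \bar m, \hat m)$ on $X$ whose unitor $\hat m$ is the unit of the adjunction and whose associator $\bar m$ is invertible. Applied to $\iota_{\ps A} \ladj w$, this produces the required structure $(w, \bar w, \hat w)$ with $\bar w$ invertible and $\hat w$ as unitor. Finally, since $\hat w$ is by construction the unit of $\iota_{\ps A} \ladj w$ and the companion of $\iota_{\ps A}$ is $\iota_{\ps A*}$, \lemref{adjunction with invertible unit} gives that $\hat w$ is invertible precisely when $\iota_{\ps A}$ is full and faithful.

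The principal obstacle is the second step, namely the appeal to the extension of an adjunction $\iota_X \ladj m$ over a colax-idempotent $2$-monad to a normal lax $T$-algebra structure. A self-contained double-categorical alternative would construct $\bar w$ directly using the universal property of $w$ as a pointwise left Kan extension—comparing pasted composites that exhibit both $w \of Tw$ and $w \of \mu_{\ps A}$ as factorisations through $w$—and then verify the coherence axioms by the uniqueness of such factorisations.
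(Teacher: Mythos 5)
Your first step reproduces the paper's opening move exactly: the factorisation $\zeta$ (your $\psi$) of the cartesian cell defining $w$ through the cartesian cell defining $\yon_{A*}$, the pasting-lemma argument that $\zeta$ defines $w$ as the pointwise left Kan extension of $\id_{\ps A}$ along $\iota_{\ps A*}$, and the definition of $\hat w$ as the composite of $\zeta$ with the opcartesian cell defining $\iota_{\ps A*}$. The gap is the pivot to an adjunction. A (pointwise) left Kan extension of the identity along a morphism $f$ is \emph{not} equivalent to an adjunction $f \ladj g$: that requires the extension to be absolute, which in the present language means that $\zeta$ itself must be \emph{cartesian}, defining $\iota_{\ps A*}$ as the conjoint $w^*$ (compare \lemref{unit of adjunction as cartesian cell} with \lemref{conjoints define pointwise left Kan extensions}); this is strictly stronger than defining a pointwise left Kan extension and is not established by your argument. (Already in $\Cat$: let $X$ be the category $a \to p \leftarrow b$ with $p = a \djunion b$, and let $f$ include $X$ into $Y = X$ with an extra object $c$ and maps $a \to c \leftarrow b$ but no map $p \to c$; the pointwise left Kan extension of $\id_X$ along $f$ exists, sending $c \mapsto p$, yet $f$ has no right adjoint since $Y(p,c) = \emptyset \neq X(p,p)$.) Your second step compounds this: a \emph{normal} lax algebra has invertible unitor, so the ``standard fact'' you invoke, with the adjunction unit as unitor, would force $\hat w$ to be invertible always, contradicting the proposition's final clause whenever $\iota_{\ps A}$ is not full and faithful. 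The paper obtains the adjunction $\iota_{\ps A} \ladj w$ only at the very end, \emph{after} assuming $\hat w$ invertible (so that $\ps A$ is a pseudo-algebra and colax-idempotency applies), and uses it solely for the ``only if'' half of the last claim; the ``if'' half is \lemref{pointwise left Kan extension along full and faithful map} applied to \eqref{definition unitor}.

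Consequently the substance of the proof cannot be outsourced to $2$-monad theory and is only gestured at in your closing paragraph. The adjunction that the paper actually exploits for the associator is $\iota_{T\ps A} \ladj \mu_{\ps A}$, available because $T\ps A$ is a \emph{free}, hence pseudo-, algebra; its unit $\eps$ factors through the opcartesian cell defining $\iota_{T\ps A*}$ as a cartesian cell $\eps'$. One then shows that the composite built from $\zeta$ and $1_w \of \eps'$ and the composite built from $\zeta$ and $T\zeta$ both define pointwise left Kan extensions of $\id_{\ps A}$ along $\iota_{\ps A*} \hc T\iota_{\ps A*}$, exhibiting $w \of \mu_{\ps A}$ and $w \of Tw$ respectively, and defines $\cell{\bar w}{w \of Tw}{w \of \mu_{\ps A}}$ as the resulting comparison, which is automatically invertible. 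The verification of the two unit axioms and the associativity axiom, by uniqueness of factorisations through these Kan-extension cells and through the relevant opcartesian cells, then occupies most of the proof and is not recovered by your proposal as it stands.
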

	
	\begin{example} \label{pointwise finite products of presheaves}
		If $T$ is the `free category with strictly associative finite pro\-ducts'\ndash mo\-nad on $\Prof$, that we have considered throughout, and $\map{\yon_A}A{\ps A = \brks{\op A, \Set}}$ is the usual yoneda embedding, then the action $\map w{T\ps A}{\ps A}$ maps any sequence $(p_1, \dotsc, p_n)$ of presheaves on $A$ to the presheaf given by
		\begin{displaymath}
			w(p_1, \dotsc, p_n)(x) = \prod_{j = 1}^n \ps A(\yon_A x, p_j) \iso \prod_{j = 1}^n p_j x;
		\end{displaymath}
		that is $w$ maps $(p_1, \dotsc, p_n)$ to their pointwise product, as expected.
	\end{example}
	
	\begin{proof}[Proof of \propref{colax-idempotent algebra structure on presheaves object}]
		To define the unitor $\hat w$ notice that the cartesian cell defining $\yon_{A*}$ defines $\id_{\ps A}$ as a pointwise left Kan extension, by axiom (a) of \defref{yoneda embedding}, so that the cartesian cell defining $w$ factors through it as a cell $\zeta$, as on the left below. We define $\hat w$ as the composite on the right.
		\begin{equation} \label{definition unitor}
				\begin{tikzpicture}[textbaseline]
				\matrix(m)[math35]{A & \ps A & T\ps A \\ \ps A & & \ps A \\};
				\path[map]	(m-1-1) edge[barred] node[above] {$\yon_{A*}$} (m-1-2)
														edge[ps] node[left] {$\yon_A$} (m-2-1)
										(m-1-2) edge[barred] node[above] {$\iota_{\ps A*}$} (m-1-3)
										(m-1-3) edge[ps] node[right] {$w$} (m-2-3);
				\path				(m-2-1) edge[eq] (m-2-3);
				\draw[transform canvas={yshift=-1.625em}]	(m-1-2) node[small] {cart};
			\end{tikzpicture} = \begin{tikzpicture}[textbaseline]
				\matrix(m)[math35]{A & \ps A & T\ps A \\ \ps A & \ps A & \ps A \\};
				\path[map]	(m-1-1) edge[barred] node[above] {$\yon_{A*}$} (m-1-2)
														edge[ps] node[left] {$\yon_A$} (m-2-1)
										(m-1-2) edge[barred] node[above] {$\iota_{\ps A*}$} (m-1-3)
										(m-1-3) edge[ps] node[right] {$w$} (m-2-3);
				\path				(m-1-2) edge[ps, eq] (m-2-2)
										(m-2-1) edge[eq] (m-2-2)
										(m-2-2) edge[eq] (m-2-3);
				\path[transform canvas={xshift=1.75em}]	(m-1-2) edge[cell] node[right] {$\zeta$} (m-2-2);
				\draw[transform canvas={shift={(1.75em, -1.625em)}}]	(m-1-1) node[small] {cart};
			\end{tikzpicture} \quad\qquad \hat w = \begin{tikzpicture}[textbaseline]
				\matrix(m)[math35]{\ps A & \ps A \\ \ps A & T\ps A \\ \ps A & \ps A \\};
				\path[map]	(m-1-2) edge node[right] {$\iota_{\ps A}$} (m-2-2)
										(m-2-1) edge[barred] node[below] {$\iota_{\ps A*}$} (m-2-2)
										(m-2-2) edge[ps] node[right] {$w$} (m-3-2);
				\path				(m-1-1) edge[eq] (m-1-2)
														edge[eq, ps] (m-2-1)
										(m-2-1) edge[eq, ps] (m-3-1)
										(m-3-1) edge[eq] (m-3-2);
				\path[transform canvas={xshift=1.75em, yshift=-2pt}]	(m-2-1) edge[cell] node[right] {$\zeta$} (m-3-1);
				\path[transform canvas={shift={(1.75em, -1.625em)}}]	(m-1-1) node[small] {opcart};
			\end{tikzpicture}
		\end{equation}
		
		Before defining the associator $\bar w$ we describe some useful properties of the cell $\zeta$. First notice that $\zeta$ defines $w$ as the pointwise left Kan extension of $\id_{\ps A}$ along $\iota_{\ps A*}$, by applying the pasting lemma to the identity on the left above. Likewise, by applying the pasting lemma for cartesian cells to the same identity, we see that the composite $\id_{\yon_{A*}}\!\! \hc \zeta$ is cartesian, and so is its $T$-image by \propref{lax double functors preserve cartesian cells}. Since the compositor $\cell{T_\hc}{T\yon_{A*} \hc T\iota_{\ps A*}}{T(\yon_{A*} \hc \iota_{\ps A*})}$ is invertible by \lemref{compositor on companions}, we conclude that the composite $T(\id_{\yon_{A*}}\!\! \hc \zeta) \of T_\hc = \id_{T\yon_{A*}}\!\! \hc T\zeta$ is cartesian as well (the identity here is the naturality of $T_\hc$). In the same way the composite $\id_{T^2\yon_{A*}}\!\! \hc T^2\zeta$ is cartesian.
		
		The associator $\bar w$ is defined as the factorisation below, as will be explained. The isomorphism in the composite on the left-hand side is induced by the identity $\iota_{T\ps A} \of \iota_{\ps A} = T\iota_{\ps A} \of \iota_{\ps A}$, which is a consequence of the naturality of $\iota$, while the cell $\eps'$ denotes the factorisation of the unit cell on $\id_{T\ps A} = \mu_{\ps A} \of \iota_{T\ps A}$, which we denote by $\eps$, through the opcartesian cell defining $\iota_{T\ps A*}$. Since $T$ restricts to a colax-idempotent $2$-monad on $\K$, the cell $\eps$ forms the unit of the adjunction $\iota_{T\ps A} \ladj \mu_{\ps A}$; see Theorem~6.2 of \cite{Kelly-Lack97} for the dual result for lax-idempotent $2$-monads. Using \lemref{unit of adjunction as cartesian cell} we conclude that the factorisation $\eps'$ of $\eps$ is cartesian.
		
		To show that the composite on the left-hand side indeed factors through that of the first two columns on the right-hand side, resulting in the cell $\bar w$, we will show that both composites define pointwise left Kan extensions. Notice that this implies that $\bar w$ is invertible, as asserted. That the left-hand side defines a pointwise left Kan extension follows easily by applying the pasting lemma to its bottom two rows, both of whose columns define pointwise left Kan extensions: we have already seen that $\zeta$ does, while $1_w \of \eps'$ does by \lemref{conjoints define pointwise left Kan extensions}, as $\eps'$ is cartesian.
		\begin{displaymath}
			\begin{tikzpicture}[textbaseline]
				\matrix(m)[math35, row sep={3.25em,between origins}, column sep={3.75em,between origins}]
				{ \ps A & T\ps A & T^2\ps A \\
					\ps A & T\ps A & T^2\ps A \\
					\ps A & T\ps A & T\ps A \\
					\ps A & \ps A & \ps A \\ };
				\path[map]	(m-1-1) edge[barred] node[above] {$\iota_{\ps A*}$} (m-1-2)
										(m-1-2) edge[barred] node[above] {$T\iota_{\ps A*}$} (m-1-3)
										(m-2-1) edge[barred] node[below] {$\iota_{\ps A*}$} (m-2-2)
										(m-2-2) edge[barred] node[below] {$\iota_{T\ps A*}$} (m-2-3)
										(m-2-3) edge node[right] {$\mu_{\ps A}$} (m-3-3)
										(m-3-1) edge[barred] node[above] {$\iota_{\ps A*}$} (m-3-2)
										(m-3-2) edge[ps] node[right] {$w$} (m-4-2)
										(m-3-3) edge[ps] node[right] {$w$} (m-4-3);
				\path				(m-1-1) edge[ps, eq] (m-2-1)
										(m-1-3) edge[eq] (m-2-3)
										(m-2-1) edge[ps, eq] (m-3-1)
										(m-2-2) edge[eq] (m-3-2)
										(m-3-1) edge[ps, eq] (m-4-1)
										(m-3-2) edge[eq] (m-3-3)
										(m-4-1) edge[eq] (m-4-2)
										(m-4-2) edge[eq] (m-4-3)
										(m-1-2) edge[cell] node[right] {$\iso$} (m-2-2);
				\path[transform canvas={xshift=1.825em}]	(m-2-2) edge[cell, transform canvas={yshift=-2pt}] node[right] {$\eps'$} (m-3-2)
										(m-3-1) edge[cell] node[right] {$\zeta$} (m-4-1);
			\end{tikzpicture} = \begin{tikzpicture}[textbaseline]
				\matrix(m)[math35, row sep={3.25em,between origins}, column sep={3.75em,between origins}]
				{	\ps A & T\ps A & T^2\ps A & T^2\ps A \\
					\ps A & T\ps A & T\ps A & T\ps A \\
					\ps A & \ps A & \ps A & \ps A \\ };
				\path[map]	(m-1-1) edge[barred] node[above] {$\iota_{A*}$} (m-1-2)
										(m-1-2) edge[barred] node[above] {$T\iota_{A*}$} (m-1-3)
										(m-1-3) edge node[right] {$Tw$} (m-2-3)
										(m-1-4) edge node[right] {$\mu_{\ps A}$} (m-2-4)
										(m-2-1) edge[barred] node[above] {$\iota_{\ps A*}$} (m-2-2)
										(m-2-2) edge[ps] node[right] {$w$} (m-3-2)
										(m-2-3) edge[ps] node[right] {$w$} (m-3-3)
										(m-2-4) edge[ps] node[right] {$w$} (m-3-4);
				\path				(m-1-1) edge[ps, eq] (m-2-1)
										(m-1-2) edge[eq] (m-2-2)
										(m-1-3) edge[eq] (m-1-4)
										(m-2-1) edge[ps, eq] (m-3-1)
										(m-2-2) edge[eq] (m-2-3)
										(m-3-1) edge[eq] (m-3-2)
										(m-3-2) edge[eq] (m-3-3)
										(m-3-3) edge[eq] (m-3-4);
				\path[transform canvas={xshift=1.825em}]	(m-1-2) edge[cell] node[right] {$T\zeta$} (m-2-2)
										(m-2-1) edge[cell] node[right] {$\zeta$} (m-3-1);
				\path[transform canvas={shift={(1.825em, -1.625em)}}]	(m-1-3) edge[cell] node[right] {$\bar w$} (m-2-3);
			\end{tikzpicture}
		\end{displaymath}
		To see that the composite of the first two columns on the right-hand side defines a pointwise left Kan extension notice that, by using the pasting lemma, we may equivalently show that the composite on the left below does so. Of the latter we already know that the bottom row defines a pointwise left Kan extension so that, by \lemref{restriction of pointwise left Kan extensions}, it suffices to show that its top row is cartesian. That it is follows from the fact that $\id_{T\yon_{A*}} \hc T\zeta$ is cartesian, as we have seen, together with the isomorphism $\yon_{A*} \hc \iota_{\ps A*} \iso \iota_{A*} \hc T\yon_{A*}$, which follows from the naturality of $\iota$.
		
		This completes the construction of the invertible associator $\bar w$; what remains is showing that, together with $\hat w$, it satisfies the associativity and unit axioms, see e.g.\ Definition 2.15 of \cite{Koudenburg14b}. In doing so it is useful to consider the identity obtained by precomposing both sides above with the opcartesian cells that define $\iota_{\ps A*}$ and $T\iota_{\ps A*}$; the resulting identity is drawn schematically on the right below, where the identity cell on the left-hand side is that of $\iota_{T\ps A} \of \iota_{\ps A} = T\iota_{\ps A} \of \iota_{\ps A}$.
		\begin{equation} \label{associator definition}
			\begin{tikzpicture}[textbaseline]
				\matrix(m)[math35, row sep={3.25em,between origins}, column sep={3.75em,between origins}]
				{	A & \ps A & T\ps A & T^2\ps A \\
					A & \ps A & T\ps A & T\ps A \\
					\ps A & \ps A & \ps A & \ps A \\ };
				\path[map]	(m-1-1) edge[barred] node[above] {$\yon_{A*}$} (m-1-2)
										(m-1-2) edge[barred] node[above] {$\iota_{\ps A*}$} (m-1-3)
										(m-1-3) edge[barred] node[above] {$T\iota_{\ps A*}$} (m-1-4)
										(m-1-4) edge node[right] {$Tw$} (m-2-4)
										(m-2-1) edge[barred] node[above] {$\yon_{A*}$} (m-2-2)
														edge[ps] node[left] {$\yon_A$} (m-3-1)
										(m-2-2) edge[barred] node[above] {$\iota_{\ps A*}$} (m-2-3)
										(m-2-3) edge[ps] node[right] {$w$} (m-3-3)
										(m-2-4) edge[ps] node[right] {$w$} (m-3-4);
				\path				(m-1-1) edge[eq] (m-2-1)
										(m-1-2) edge[ps, eq] (m-2-2)
										(m-1-3) edge[eq] (m-2-3)
										(m-2-2) edge[ps, eq] (m-3-2)
										(m-2-3) edge[eq] (m-2-4)
										(m-3-1) edge[eq] (m-3-2)
										(m-3-2) edge[eq] (m-3-3)
										(m-3-3) edge[eq] (m-3-4);
				\path[transform canvas={xshift=1.825em}]	(m-1-3) edge[cell] node[right] {$T\zeta$} (m-2-3)
										(m-2-2) edge[cell] node[right] {$\zeta$} (m-3-2);
				\draw[transform canvas={shift={(1.825em, -1.625em)}}]	(m-2-1) node[small] {cart};
			\end{tikzpicture} \qquad\qquad \begin{tikzpicture}[baseline=+1.1cm, x=0.6cm, y=0.6cm, font=\scriptsize]
				\draw	(1,3) -- (1,0) -- (0,0) -- (0,4) -- (1,4) -- (1,3) -- (2,3) -- (2,1) -- (1,1)
							(2,3) -- (2,4) -- (3,4) -- (3,2) -- (2,2);
				\draw (0.5,2) node {$\hat w$}
							(1.5,2) node {$\eps$};
			\end{tikzpicture} \mspace{12mu} = \mspace{12mu} \begin{tikzpicture}[baseline=+1.1cm, x=0.6cm, y=0.6cm, font=\scriptsize]
				\draw (2,1) -- (1,1) -- (1,3) -- (2,3) -- (2,0) -- (3,0) -- (3,2) -- (2,2)
							(1,1) -- (1,0) -- (0,0) -- (0,4) -- (1,4) -- (1,3);
				\draw	(0.5,2) node {$\hat w$}
							(1.5,2) node {$T\hat w$}
							(2.5,1) node {$\bar w$};
			\end{tikzpicture}
		\end{equation}
		
		The three axioms that we need to check are drawn schematically below where, as always, empty cells depict identity cells. For the first two, which are the unit axioms, notice that we may equivalently prove that they hold after composing each of their sides on the left with $\zeta$, which follows from the uniqueness of factorisations through cells defining a Kan extension. The resulting identities in turn hold precisely if they hold after precomposing each side with the opcartesian cell that defines $\iota_{\ps A*}$, which follows from the uniqueness of factorisations through opcartesian cells. Remembering that the unitor $\hat w$ is defined as the composite of $\zeta$ with this opcartesian cell, see \eqref{definition unitor}, we conclude that the unit axioms hold if and only if they hold after composing their sides on the left with $\hat w$. That the first one holds after this composition is a direct consequence of the identity on the right above.
		\begin{equation} \label{lax algebra axioms}
			\begin{tikzpicture}[baseline=+0.8cm, x=0.6cm, y=0.6cm, font=\scriptsize]
				\draw	(1,1) -- (0,1) -- (0,3) -- (1,3) -- (1,0) -- (2,0) -- (2,3) -- (3,3) -- (3,1) -- (2,1)
							(1,2) -- (2,2);
				\draw	(0.5,2) node {$T\hat w$}
							(1.5,1) node {$\bar w$};
			\end{tikzpicture} \mspace{4mu} = 1_w \mspace{18mu} \begin{tikzpicture}[baseline=+0.8cm, x=0.6cm, y=0.6cm, font=\scriptsize]
				\draw	(1,2) -- (0,2) -- (0,0) -- (1,0) -- (1,3) -- (2,3) -- (2,0) -- (3,0) -- (3,2) -- (2,2)
							(1,1) -- (2,1);
				\draw	(0.5,1) node {$\hat w$}
							(2.5,1) node {$\bar w$};
			\end{tikzpicture} \mspace{4mu} = \mspace{4mu} \begin{tikzpicture}[baseline=+0.8cm, x=0.6cm, y=0.6cm, font=\scriptsize]
				\draw (1,3) -- (1,0) -- (0,0) -- (0,3) -- (2,3) -- (2,1) -- (1,1);
				\draw (1.5, 2) node {$\eps$};
			\end{tikzpicture} \mspace{30mu} \begin{tikzpicture}[baseline=+0.8cm, x=0.6cm, y=0.6cm, font=\scriptsize]
				\draw (1,1) -- (0,1) -- (0,3) -- (1,3) -- (1,0) -- (2,0) -- (2,3) -- (3,3) -- (3,1) -- (2,1)
							(1,2) -- (2,2);
				\draw (0.5,2) node {$T\bar w$}
							(1.5,1) node {$\bar w$};
			\end{tikzpicture} \mspace{4mu} = \mspace{4mu} \begin{tikzpicture}[baseline=+0.8cm, x=0.6cm, y=0.6cm, font=\scriptsize]
				\draw	(1,2) -- (0,2) -- (0,0) -- (1,0) -- (1,3) -- (2,3) -- (2,0) -- (3,0) -- (3,2) -- (2,2)
							(1,1) -- (2,1);
				\draw	(0.5,1) node {$\bar w$}
							(2.5,1) node {$\bar w$};
			\end{tikzpicture}
		\end{equation}
		
		That the second unit axiom holds after precomposition with $\hat w$ is shown by the following equation. Here the first identity follows from the naturality of $\iota$ with respect to the top unitor $\bar w$, the empty cell in the second composite depicts the identity cell $T\iota_{\ps A} \of \iota_{\ps A} = \iota_{T\ps A} \of \iota_{\ps A}$, and the second identity follows from the identity on the right of \eqref{associator definition}. 
		\begin{displaymath}
			\begin{tikzpicture}[baseline=+1.1cm, x=0.6cm, y=0.6cm, font=\scriptsize]
				\draw	(2,2) -- (3,2) -- (3,0) -- (2,0) -- (2,3) -- (1,3) -- (1,0) -- (0,0) -- (0,4) -- (1,4) -- (1,3)
							(0,2) -- (1,2)
							(1,1) -- (2,1);
				\draw	(0.5,1) node {$\hat w$}
							(0.5,3) node {$\hat w$}
							(2.5,1) node {$\bar w$};
			\end{tikzpicture} \mspace{9mu} = \mspace{9mu} \begin{tikzpicture}[baseline=+1.1cm, x=0.6cm, y=0.6cm, font=\scriptsize]
				\draw	(1,3) -- (1,0) -- (0,0) -- (0,4) -- (1,4) -- (1,3) -- (2,3) -- (2,4) -- (3,4) -- (3,0) -- (2,0) -- (2,3)
							(1,1) -- (2,1)
							(2,2) -- (3,2);
				\draw (0.5,2) node {$\hat w$}
							(1.5,2) node {$T\hat w$}
							(2.5,1) node {$\bar w$};
			\end{tikzpicture} \mspace{9mu} = \mspace{9mu} \begin{tikzpicture}[baseline=+1.1cm, x=0.6cm, y=0.6cm, font=\scriptsize]
				\draw (1,1) -- (2,1) -- (2,3) -- (1,3) -- (1,0) -- (0,0) -- (0,4) -- (1,4) -- (1,3);
				\draw (0.5,2) node {$\hat w$}
							(1.5,2) node {$\eps$};
			\end{tikzpicture} \mspace{9mu} = \mspace{9mu} \begin{tikzpicture}[baseline=+1.1cm, x=0.6cm, y=0.6cm, font=\scriptsize]
				\draw (1,2) -- (0,2) -- (0,4) -- (1,4) -- (1,0) -- (2,0) -- (2,3) -- (3,3) -- (3,1) -- (2,1)
							(1,3) -- (2,3);
				\draw (0.5,3) node {$\hat w$}
							(2.5,2) node {$\eps$};
			\end{tikzpicture}
		\end{displaymath}
		
		This leaves the associativity axiom, which is the identity on the right of \eqref{lax algebra axioms}. Analogous to the argument given before we may equivalently prove it holds after composing it on the left with the composite of the cells $\hat w$, $T\hat w$ and $T^2 \hat w$, as shown on the left below. This follows from the fact that this composite factors as the opcartesian cell defining the companion of $T^2\iota_{\ps A} \of T\iota_{\ps A} \of \iota_{\ps A}$, followed by the composite of the cells $\zeta$, $T\zeta$ and $T^2\zeta$, which can be shown to define $w \of Tw \of T^2 w$ as a left Kan extension, just like we have shown that the composite of $\zeta$ and $T\zeta$ defines $w \of Tw$ as a left Kan extension.
		
		First consider the left-hand side of the associativity axiom, composed on the left with $T^2\hat w$, $T\hat w$ and $\hat w$. It reduces as follows, where the first identity follows from the $T$-image of the identity on the left of \eqref{lax algebra axioms} and the second one from the identity on the left of \eqref{associator definition}; the empty cell in the composite on the right depicts the identity cell $\id_{T^2\ps A} = \mu_{\ps A} \of \mu_{T\ps A} \of T^2\iota_{\ps A} \of T\iota_{\ps A}$.
		\begin{displaymath}
			\begin{tikzpicture}[baseline=+1.7cm, x=0.6cm, y=0.6cm, font=\scriptsize]
				\draw	(4,1) -- (3,1) -- (3,3) -- (4,3) -- (4,0) -- (5,0) -- (5,3) -- (6,3) -- (6,1) -- (5,1)
							(3,3) -- (3,4) -- (2,4) -- (2,1) -- (1,1) -- (1,6) -- (0,6) -- (0,0) -- (1,0) -- (1,1)
							(1,5) -- (2,5) -- (2,4)
							(2,2) -- (3,2)
							(4,2) -- (5,2);
				\draw (0.5,3) node {$\hat w$}
							(1.5,3) node {$T\hat w$}
							(2.5,3) node {$T^2\hat w$}
							(3.5,2) node {$T\bar w$}
							(4.5,1) node {$\bar w$};
			\end{tikzpicture} \mspace{9mu} = \mspace{9mu} \begin{tikzpicture}[baseline=+1.7cm, x=0.6cm, y=0.6cm, font=\scriptsize]
				\draw	(3,1) -- (4,1) -- (4,3) -- (3,3) -- (3,0) -- (2,0) -- (2,5) -- (1,5) -- (1,0) -- (0,0) -- (0,6) -- (1,6) -- (1,5)
							(1,1) -- (2,1)
							(2,4) -- (3,4) -- (3,3)
							(2,2) -- (3,2);
				\draw	(0.5,3) node {$\hat w$}
							(1.5,3) node {$T\hat w$}
							(2.5,1) node {$\bar w$};
			\end{tikzpicture} \mspace{9mu} = \mspace{9mu} \begin{tikzpicture}[baseline=+1.7cm, x=0.6cm, y=0.6cm, font=\scriptsize]
				\draw (1,5) -- (1,0) -- (0,0) -- (0,6) -- (1,6) -- (1,5) -- (2,5) -- (2,1) -- (1,1);
				\draw (0.5,3) node {$\hat w$};
			\end{tikzpicture}
		\end{displaymath}
		Analogously the right-hand side of the associativity axiom, also composed on the left with $T^2\hat w$, $T\hat w$ and $\hat w$, reduces as shown below. Here the first identity follows from the naturality of $\iota$ with respect to $T^2\hat w$, while the last two follow from the identity on the left of \eqref{associator definition}. We conclude that the two sides of the identity forming the associativity axiom coincide after being composed with $T^2\hat w$, $T\hat w$ and $\hat w$; as previously discussed this implies the axiom itself.
		\begin{displaymath}
			\begin{tikzpicture}[baseline=+1.7cm, x=0.6cm, y=0.6cm, font=\scriptsize]
				\draw	(4,2) -- (5,2) -- (5,0) -- (4,0) -- (4,3) -- (3,3) -- (3,0) -- (2,0) -- (2,5) -- (1,5) -- (1,0) -- (0,0) -- (0,6) -- (1,6) -- (1,5)
							(1,1) -- (2,1)
							(2,4) -- (3,4) -- (3,3)
							(2,2) -- (3,2)
							(3,1) -- (4,1);
				\draw	(0.5,3) node {$\hat w$}
							(1.5,3) node {$T\hat w$}
							(2.5,1) node {$\bar w$}
							(2.5,3) node {$T^2\hat w$}
							(4.5,1) node {$\bar w$};
			\end{tikzpicture} \mspace{9mu} = \mspace{9mu} \begin{tikzpicture}[baseline=+1.7cm, x=0.6cm, y=0.6cm, font=\scriptsize]
				\draw (4,3) -- (4,4) -- (5,4) -- (5,0) -- (4,0) -- (4,3) -- (3,3) -- (3,0) -- (2,0) -- (2,5) -- (1,5) -- (1,0) -- (0,0) -- (0,6) -- (1,6) -- (1,5)
							(1,1) -- (2,1)
							(2,4) -- (3,4) -- (3,3)
							(3,1) -- (4,1)
							(4,2) -- (5,2);
				\draw	(0.5,3) node {$\hat w$}
							(1.5,3) node {$T\hat w$}
							(2.5,2) node {$\bar w$}
							(3.5,2) node {$T\hat w$}
							(4.5,1) node {$\bar w$};
			\end{tikzpicture} \mspace{9mu} = \mspace{9mu} \begin{tikzpicture}[baseline=+1.7cm, x=0.6cm, y=0.6cm, font=\scriptsize]
				\draw (1,5) -- (2,5) -- (2,1) -- (1,1) -- (1,3) -- (2,3)
							(1,3) -- (1,6) -- (0,6) -- (0,0) -- (1,0) -- (1,1)
							(2,5) -- (2,6) -- (3,6) -- (3,0) -- (2,0) -- (2,1)
							(2,2) -- (3,2)
							(2,4) -- (3,4);
				\draw (0.5,3) node {$\hat w$}
							(1.5,2) node {$T\hat w$}
							(1.5,4) node {$\eps$}
							(2.5,1) node {$\bar w$};
			\end{tikzpicture} \mspace{9mu} = \mspace{9mu} \begin{tikzpicture}[baseline=+1.7cm, x=0.6cm, y=0.6cm, font=\scriptsize]
				\draw (1,5) -- (1,0) -- (0,0) -- (0,6) -- (1,6) -- (1,5) -- (2,5) -- (2,1) -- (1,1);
				\draw (0.5,3) node {$\hat w$};
			\end{tikzpicture}
		\end{displaymath}
		
		This concludes the proof of $(w, \bar w, \hat w)$ forming a lax $T$-algebra structure on the presheaf object $\ps A$. The last assertion, on the invertibility of $\hat w$, is proven as follows. If $\map{\iota_{\ps A}}{\ps A}{T\ps A}$ is full and faithful then $\hat w$ is invertible by applying \lemref{pointwise left Kan extension along full and faithful map} to the identity that defines $\hat w$, on the left of \eqref{definition unitor}. On the other hand if $\hat w$ is invertible then $\ps A$ forms a pseudo $T$-algebra so that, $T$ being colax-idempotent, $\iota_{\ps A}$ is left-adjoint to $w$ in an adjunction whose unit is $\hat w$; that $\iota_{\ps A}$ is full and faithful then follows from \lemref{adjunction with invertible unit}.
	\end{proof}
	\bibliographystyle{alpha}
	\bibliography{main}
\end{document}